\documentclass[11pt]{amsart}

\usepackage[hmargin=1.25in,vmargin=1.25in]{geometry}

\usepackage{amsmath,amssymb,amsthm}
\usepackage{mathtools}

\usepackage{graphicx}
\usepackage{subcaption}
\usepackage{float}
     \usepackage{tikz}
\usetikzlibrary{quotes,angles}

\usepackage{array}
\usepackage{tensor}
\usepackage{paralist}
\usepackage[english]{babel}
\usepackage{xcolor}
\usepackage{hyperref}
\usepackage{microtype}

\theoremstyle{plain}
\newtheorem{theo}{Theorem}[section]
\newtheorem{prop}[theo]{Proposition}
\newtheorem{lem}[theo]{Lemma}

\newtheorem{cor}[theo]{Corollary}
\theoremstyle{definition}
\newtheorem{def1}[theo]{Definition}

\theoremstyle{remark}

\renewcommand{\Im}{\operatorname{Im}}
\newcommand{\vol}{\operatorname{Vol}}

\newcommand{\Id}{\operatorname{Id}}

\newcommand{\R}{\mathbb{R}}

\newcommand{\N}{\mathbb{N}}

\renewcommand{\phi}{\varphi}
\renewcommand{\theta}{\vartheta}
\renewcommand{\epsilon}{\varepsilon}

\newcommand{\loc}{\mathrm{loc}}

\newcommand{\rmi}{\mathrm{i}}            
\newcommand{\der}{\mathrm{d}}            
\newcommand{\comp}{\mathrm{comp}}        

\newenvironment{rem}{\medskip\noindent{\it Remark:\/} }{\medskip}

\mathtoolsset{showonlyrefs=true}
\title[Scattering matrix for the $p$-form Laplacian]{The scattering matrix for the $p$-form Laplacian on asymptotically conic manifolds}
\date{}

\author{Nelia Charalambous}
\address{University of Cyprus} \email{charalambous.nelia@ucy.ac.cy}

\author{Alden Waters}
\address{Leibniz Universit\"at Hannover} \email{alden.waters@math.uni-hannover.de }

\subjclass[2020]{Primary 58J50, 35P25;
  Secondary 58A10, 58J32, 47A40, 35Q61, 33C10}

\keywords{Scattering matrix, Hodge Laplacian,
  differential forms, asymptotically conic manifolds, generalized
  eigenforms, limiting absorption principle, Maxwell's equations}

\begin{document}
	\maketitle
	
\begin{abstract}
We give an explicit description of the scattering matrix for the Hodge-Laplacian on co-closed $p$-forms on asymptotically conic manifolds of dimension $n\geq 3$. We develop generalized eigenfunctions and a functional calculus to describe the result, a departure from the Fourier Integral Operators used in the scalar case. Starting from the Hodge decomposition at infinity, we construct generalized eigenforms which are co-closed and establish a spectral representation for the $p$-form Hodge Laplacian. The special case of dimension $3$ for $p=1$ characterizes the electric field for Maxwell's equations.  
\end{abstract}

\section{Introduction}

This article characterizes the scattering matrix for the Hodge Laplacian on differential forms on asymptotically conic manifolds. The main contribution is an explicit formula for the co-closed $p$-form scattering matrix, including its decomposition into the closed and co-closed tangential components determined by the Hodge decomposition at infinity.  The scalar analogue is due to Christiansen \cite{christiansen}; the present work carries that analysis to $p$-forms, making use of the scattering-metric and resolvent framework of \cite{melrosebook,MZInvent,HV,HVLaplace,HW,gh1,gh2,gh3} and the theory of the Hodge Laplacian on non-compact manifolds \cite{GS,CCH,Bueler,gaffney,donnelly}.

Passing from functions to differential forms introduces substantial new structure.  One must track the exterior derivative $d$, its adjoint $\delta$, and the decomposition of forms into tangential and normal parts at infinity; the Hodge decomposition on the boundary at infinity then further separates the data into closed and co-closed pieces.  The scattering matrix for differential forms on conical ends was studied by Parnovski \cite{Parnovski}.  Here we obtain the asymptotically conic scattering matrix by reducing the end equations to explicit Bessel and Hankel systems for the boundary Hodge components.  The present approach extends the methods of \cite{OS,RBM} to the Hodge Laplacian on $p$-forms; for background on scattering resonances see \cite{DZbook,PZ}, for the recovery of asymptotics from scattering data see \cite{JSB}, and for the functional-analytic setting \cite{RS}.

Let $(M,g)$ be an asymptotically conic manifold of dimension $n\geq 3$.  Near infinity the end is identified with $(0,\epsilon_1)\times N$ and there the metric takes the form
\[
        g=\frac{dx^2}{x^4}+\frac{h(x)}{x^2},
        \qquad h(0)=h_0,
\]
where $(N,h_0)$ is a compact Riemannian manifold. The geometry of $N$ determines the model equation at infinity.  After decomposing a $p$-form into its closed, co-closed, tangential, and normal components, the model eigenvalue equation reduces to ordinary differential equations.  These equations are solved explicitly in terms of Bessel and Hankel functions.

The principal result identifies the two tangential components of the phase-normalized scattering matrix for manifolds which are suitable perturbations of exact near the boundary at infinity model manifolds. In the tangential part, modulo smoothing terms, the leading factors are determined by
\[
-i\exp\left(-i\pi\sqrt{\left(\frac n2-p-1\right)^2+\Delta_N}\right)
\quad \text{and} \quad
i\exp\left(-i\pi\sqrt{\left(\frac n2-p\right)^2+\Delta_N}\right),
\]
corresponding to the two relevant types of boundary data.  This gives a differential-form version of the scalar scattering description of Melrose and Zworski \cite{MZInvent} which was obtained through the development of Fourier Integral Operators (FIOs). In contrast, the proof here computes the scattering matrix directly from generalized eigenforms; these eigenforms are the main technical device, and the scattering matrix formula is the central contribution of the paper. The scalar result of \cite{MZInvent} holds under more general assumptions; the present work should be viewed as the differential-form analogue of the scalar result of \cite{christiansen} and \cite{JSB}. 

In dimension $n=3$ with $p=1$, co-closed one-forms are precisely divergence-free vector fields; via their identification with the Maxwell electric field, the construction yields a direct construction of solutions for the electromagnetic field on asymptotically conic spaces.  In the Euclidean model case, the generalized eigenform representation agrees with the Maxwell spectral representations used in \cite{YFW}.  Related low-energy scattering results for manifolds with cylindrical ends appear in \cite{mueller}.

\section{Statement of the main theorem}
Let $(M,g)$ be an $n$-dimensional asymptotically conic manifold with cross-section a closed smooth Riemannian manifold, $(N,h_0)$. The manifold is  the interior of a smooth compact manifold $\overline{M}$ with boundary $\partial{\overline{M}}=N$ equipped with a complete smooth metric $g$ such that the metric on the end $Z=(0,\epsilon_1)\times N$ for some $\epsilon_1\in (0,1]$ takes the form
\begin{align}\label{conicmetric}
g=\frac{dx^2}{x^4} + \frac{h (x)}{x^2},
\end{align} 
where $h(x)$ is a smooth metric on $N$ with $h(0)=h_0$.  We let $\widetilde{M}_0$ be the exact manifold with metric 
\[
g_0=\frac{dx^2}{x^4} + \frac{h_0}{x^2}
\] 
(in other words with constant metric $h_0$ on $N$ for all $x$). The manifold $\widetilde{M}$ is a compact perturbation of $\widetilde{M}_0$, that is outside a compact set $\widetilde{M}$ has an exact end $Z=(0,\epsilon_1)\times N$ with  metric $g_0$. Here it is understood that at the possible singularity   $x=\infty$ (resp. $x=1/r$, $r=0$), the Hodge-Laplacian on $\widetilde{M}_0$ is understood to be its Friedrichs extension. Note that in the case when $N$ is the sphere, there is no singularity at $x=\infty$ and the manifold is called asymptotically Euclidean.   We assume that on the subset $Z$, the metrics on $M$ and $\widetilde{M}$  satisfy
\begin{align}\label{closeness}
h_0-h(x)=O(x^{\infty}),
\end{align}
or that 
\begin{align}\label{closeness2}
h_0-h(x)=O(x^{n_0}), \quad n_0\geq 3
\end{align}
as $x\to 0$.

Throughout this paper all differential forms are complex-valued, so $\Lambda^pT^*M$ stands for $\Lambda^p_{\mathbb{C}}T^*M$; we suppress the subscript $\mathbb{C}$ where no confusion can arise (similarly for $\widetilde{M}_0$ and $\widetilde{M}$).  The Hodge Laplacian on $p$-forms is defined by $\Delta_p=d\delta+\delta d$ with spectrum which is contained in $[0,\infty)$.  The corresponding resolvent, $R_{\lambda}=(\Delta_p-\lambda^2)^{-1}$ is a well-defined operator for $\Im(\lambda)> 0$ on $L^2(M,dg)$. By Theorem~1 in \cite{melbook2}
we have that $R_{\lambda}$ has a meromorphic extension to $\mathbb{C}$ in odd dimensions and $\mathbb{C}\setminus [0,\infty)$ in even dimensions as a family of bounded operators from $H^s_{\comp}(M;\Lambda^pT^*M)\rightarrow H^{s+2}_{\loc}(M;\Lambda^pT^*M)$ with finite rank negative Laurent coefficients.

The absence of non-zero $L^2$ eigenforms at positive energy is automatic in the present geometric setting, by the following recent result.
\begin{prop}\label{prop:no-positive-eigenforms}[\cite[Proposition~4.6]{ACCL}]
Let $(X,g_X)$ be a smooth complete asymptotically conic Riemannian manifold.  For every degree $p$ and every $\mu\in\mathbb R\setminus\{0\}$, there is no non-zero form
\[
u\in\operatorname{dom}(\Delta_p)\subset L^2(X;\Lambda^pT^*X)
\qquad\text{such that}\qquad
\Delta_p u=\mu u.
\]
\end{prop}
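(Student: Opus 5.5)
Since $\Delta_p\geq 0$, the case $\mu<0$ is immediate: $\langle\Delta_p u,u\rangle=\|du\|^2+\|\delta u\|^2=\mu\|u\|^2$ forces $u=0$. So I will assume $\mu=\lambda^2>0$ and show that an $L^2$ eigenform must vanish. The plan is the standard one from scalar scattering theory, adapted to forms: first a Rellich-type argument giving rapid decay of $u$ at infinity, then unique continuation from infinity.

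\emph{Step 1: reduction to the end and separation of variables.} On the end $Z=(0,\epsilon_1)\times N$ I write $u=u_T+dx\wedge u_N$, with $u_T,u_N$ tangential forms on $N$ depending on $x$. I expand these components in the Hodge decomposition of $\Delta_N$ on $N$: harmonic, exact and coexact eigenforms. For the exact model metric $g_0$, the equation $\Delta_p u=\lambda^2 u$ then becomes a finite system of Bessel-type ODEs in $r=1/x$ for each eigenvalue $\nu$ of $\Delta_N$. Their solutions behave like $r^{c}e^{\pm i\lambda r}$ for a fixed power $c$ (depending on $n$ and $p$) that is exactly borderline for $L^2(r^{n-1}dr)$. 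Consequently, for the model, $L^2$ forces every coefficient to vanish identically near infinity.

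\emph{Step 2: the perturbed metric.} For the general $h(x)$ I treat $\Delta_p-\Delta_{p,0}$ as a scattering differential operator with coefficients vanishing at $x=0$ (in $\mathrm{Diff}_{sc}$ with an extra factor of $x$). I then run a Mourre/positive-commutator argument in the scattering calculus with commutant $A=x^{-2}D_x$ (radial derivative), exactly as in the scalar case (Froese--Herbst, Vasy). The symbol of $\Delta_p$ is scalar, $|\xi|^2$ times the identity, so the bundle structure only contributes lower-order terms. This should yield that $x^{-k}u\in L^2$ for every $k$, i.e.\ $u=O(x^\infty)$ with all derivatives.

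\emph{Step 3: unique continuation at infinity.} Next I apply a Carleman estimate with weight $e^{\tau/x}$ (or $x^{-\tau}$) for the principally scalar operator $\Delta_p$ on the end. This gives $u\equiv0$ on $Z$. Ordinary unique continuation for second-order elliptic systems with scalar principal part (Aronszajn) then gives $u\equiv0$ on all of $X$.

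The main obstacle is Step 2. The positive-commutator argument must handle the matrix-valued lower-order terms coming from the curvature of $N$ and the $p$-dependent first-order coupling between $u_T$ and $u_N$, and these must not destroy positivity at high $\tau$. I would first try to absorb them as $O(x)$ relative errors, using that they are one scattering order lower at the boundary. If that fails, I would conjugate by the Bessel-model weights from Step 1 so that the leading radial part becomes scalar.
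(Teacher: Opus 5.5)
The paper gives no proof of this statement: it quotes \cite[Proposition~4.6]{ACCL} and uses it as a black box, so your outline has to stand on its own. As an outline it does. It is the classical scalar route of \cite{melbook2}: show that an $L^2$ solution at nonzero energy is $O(x^\infty)$ with all scattering derivatives, then apply unique continuation at infinity. It transfers to forms for a slightly sharper reason than the one you give. What matters is not only that the principal symbol of $\Delta_p$ is scalar, but that its scattering symbol at $x=0$ is scalar, namely $(|\zeta|^2_{g_0}-\lambda^2)\,\Id$ in the scattering fibre variable $\zeta$. In \eqref{eqn:Lapl}, every form-specific term ($x^2\alpha(\alpha-1)$, $x^2(\beta+2)(\beta+3)$, $2x^2d_N$, $2x^2\delta_N$) carries at least one extra factor of $x$ relative to the scattering derivatives $x^2\partial_x$, $x\partial_j$. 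Likewise, $h(x)-h_0=O(x)$ only adds terms in $x\,\mathrm{Diff}^2_{\mathrm{sc}}$. Together with formal self-adjointness on a Hermitian bundle, this keeps the radial-set threshold weight at $-1/2$, exactly as for functions. Since $u\in L^2$ is above threshold at both radial sets, radial-point (that is, positive-commutator) estimates give the rapid decay directly. So your first option, absorbing the couplings as $O(x)$ errors, works, and the Bessel-conjugation fallback is not needed.

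Three corrections. First, $x^{-2}D_x$ is neither the radial derivative (that is $x^2D_x=-D_r$) nor a scattering operator. A Mourre or Froese--Herbst argument uses the dilation generator, roughly $xD_x$, or a microlocalized weight. Second, in Step~3 you must use the weight $x^{-\tau}$, because Step~2 only yields $O(x^\infty)$; the weight $e^{\tau/x}$ would first require superexponential decay. With $x^{-\tau}$, the form-specific terms lie in $x\,\mathrm{Diff}_{\mathrm{sc}}$, so they stay lower order relative to the conjugated principal part, whose symbol does not vanish when $\lambda\neq0$. They are therefore absorbed for $\tau$ large and $\epsilon_1$ small, just as the metric-perturbation terms are in the scalar case. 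Third, Step~1 is logically idle: it only covers exactly conic ends, and nothing later uses it.

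Compared with the paper, the citation buys brevity. Your route makes explicit that only self-adjointness and a scalar boundary symbol are used, but it still leaves the form-valued radial and Carleman estimates to be written out.
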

Consequently, neither $M$ nor the smooth compact perturbation $\widetilde M$ has positive embedded eigenvalues for the Hodge Laplacian in any degree.

In order to introduce the main theorem we let $f(x)$ be a smooth function of $x$ and $\eta$ a $p$-form on $N$. In particular, the space of $L^2$-integrable $p$-forms on $M$ (and also on $Z$ with metric $g_0$) near the boundary we will show is given by forms of type 
\begin{equation}\label{psi}
\psi(x,y)=f(x) x^{\frac{n-1}{2} -p} \, \eta_1(y)+g(x) x^{\frac{n-1}{2} -(p-1)-2}\, dx\wedge \eta_2(y)
\end{equation}
where $f, g\in C^{\infty}(0,\epsilon_1),$ $\eta_1(y)\in \Lambda^p T^* N$ and $\eta_2(y)\in \Lambda^{p-1} T^* N$ on the submanifold $N$. Note that on the exact manifold ${\widetilde{M}_0}$
\begin{align}
 |x^{-p} \, \eta_1(y)|_{\widetilde{M}_0} = |\eta_1(y)|_N \quad \mathrm{and}\quad |x^{-p-1}\, dx\wedge\eta_2(y)|_{\widetilde{M}_0} = |\eta_2(y)|_N 
 \end{align}
and on $M$ these also hold asymptotically as $x\to 0$ by the assumptions on $h$.

Based on this decomposition we now define a projection mapping on $Z$ 
\begin{align}
{\Pi}_p:  L^2(Z;\Lambda^p T^*Z; \vol_Z) \rightarrow L^2\left(\,\left((0,\epsilon_1);  \Lambda^p T^*N\right) \oplus \left((0,\epsilon_1);\Lambda^{p-1}T^* N\right), x^{-2} \vol_{N}\, dx \right) 
\end{align}	
such that
\begin{equation}
{\Pi}_p \psi = \left(\begin{matrix}f\, \eta_1 \\ g \, \eta_2\end{matrix} \right)
\end{equation}
where $\psi$  is decomposed as in \eqref{psi}.

We define the phase-normalized tangential scattering matrix as follows. 
\begin{def1}\label{def:phase-scattering}
 Suppose $\lambda\in\mathbb{R}\setminus\{0\}$ and $u\in C^{\infty}(M,\Lambda^{p}T^*M)$ satisfies
\begin{align}
(\Delta_p-\lambda^2)u=0, \qquad  \delta u=0, \qquad \text{and} \quad \Pi_pu=e^{-i\frac{\lambda}{x}}\Phi+e^{i\frac{\lambda}{x}}\Psi+O(x) \quad \text{as} \quad x\rightarrow 0.
\end{align}
Then the tangential scattering matrix is defined by
\begin{align}
\mathcal{S}^t(\lambda): C^{\infty}(N,\Lambda^pT^*N)\rightarrow C^{\infty}(N,\Lambda^pT^*N); \quad \mathcal{S}^t(\lambda):\Phi\mapsto \Psi.
\end{align}
Here $\Phi,\Psi\in C^{\infty}(N,\Lambda^pT^*N)$ denote the tangential components of the pair $\Pi_p u$ determined through the co-closedness condition $\delta u=0$.
\end{def1}
The full scattering matrix including the normal part can be obtained by Hodge-Duality. In particular,  for the exact model $\widetilde{M}_0$  the tangential scattering model matrix is diagonal  
\begin{align}
\mathcal{S}_{\widetilde{M}_0}^t(\lambda)=\begin{pmatrix}
\mathcal{S}_{\widetilde{M}_0,1}^t & 0 \\
0 & \mathcal{S}_{\widetilde{M}_0,2}^t
\end{pmatrix}
\end{align}
where $\mathcal{S}^t_1$ acts on forms on $N$ which are co-closed and $\mathcal{S}^t_2$ acts on forms on $N$ which are closed. 
The structure of the scattering matrix is described in the following theorem, which is the main result of this paper.
\begin{theo}\label{lengths}
Suppose that the metric on $M$ satisfies \eqref{closeness}.  Then, for the exact model $\widetilde{M}_0$ the tangential scattering matrix on $N$ has the two components
\begin{align}
\mathcal{S}^t_{1,\widetilde M_0}(\lambda)&=-i e^{-i\pi\sqrt{\left(\frac{n}{2}-p-1\right)^2+\Delta_N}},
&
\mathcal{S}^t_{2,\widetilde M_0}(\lambda)&=i e^{-i\pi\sqrt{\left(\frac{n}{2}-p\right)^2+\Delta_N}}.
\end{align}
These act on co-closed and closed forms on $N$, respectively. The tangential scattering matrix on $M$ satisfies
\begin{align}
\mathcal{S}^t_M(\lambda)=\mathcal{S}^t_{\widetilde M_0}(\lambda)+\alpha_M(\lambda),
\end{align}
where $\alpha_M(\lambda)$ is a smoothing operator on $C^{\infty}(N;\Lambda^pT^*N)$. Under assumption \eqref{closeness2}, the same conclusion holds with $\alpha_M(\lambda)$ replaced by an operator
\[
\alpha_M(\lambda):H^{-s}(N;\Lambda^pT^*N)\longrightarrow L^2(N;\Lambda^pT^*N),
\qquad s<n_0-1.
\]
\end{theo}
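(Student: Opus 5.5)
The plan is to treat the exact model $\widetilde M_0$ first by separation of variables, and then handle $M$ as a perturbation.

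\textbf{Model computation.} On the end $Z$ with metric $g_0$, write $r=1/x$. Decompose $\eta_1,\eta_2$ using the Hodge decomposition on $(N,h_0)$: $\eta_1=\eta_1^{c}+\eta_1^{e}+\eta_1^{h}$ with $\eta_1^c$ co-closed and $\eta_1^e$ exact. Then take $\Delta_N$-eigenform expansions, with eigenvalue $\mu^2$. For a co-closed $p$-form $u=f(x)x^{\frac{n-1}{2}-p}\eta$ with $\delta_N\eta=0$, the normal component vanishes. The equation $(\Delta_p-\lambda^2)u=0$ then reduces, in the variable $r$, to a Bessel equation of order
\[
\nu_1=\sqrt{(\tfrac n2-p-1)^2+\mu^2},
\]
the shift $p+1$ coming from the weight $r^{-p}$ of tangential forms. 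The closed tangential data pair with a normal part $dx\wedge\eta_2$, where $d_N\eta_2\propto\eta$. For these I would use co-closedness $\delta u=0$ to solve for the normal coefficient in terms of the tangential one, $g\sim \partial_r(\cdot)/\mu^2$. This reduces the system to a single Bessel equation of order
\[
\nu_2=\sqrt{(\tfrac n2-p)^2+\mu^2}.
\]
Equivalently, write $u=\delta(dr\wedge\cdot)$-type potentials from $(p-1)$-form data. In each case the solution regular at $r=0$ (the Friedrichs extension) is
\[
r^{-\frac{n-2}{2}+\cdots}J_{\nu}(\lambda r)=\tfrac12\bigl(H^{(1)}_\nu+H^{(2)}_\nu\bigr).
\]
The large-$r$ asymptotics $H^{(1,2)}_\nu(z)\sim\sqrt{2/\pi z}\,e^{\pm i(z-\nu\pi/2-\pi/4)}$ give the ratio of outgoing to incoming coefficients as $e^{-i\pi\nu}e^{-i\pi/2}$, up to the $\Pi_p$ normalization. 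The extra factor $\mp i$ relative to the scalar case comes from the derivative $\partial_r$ used to build the closed component, which multiplies $e^{\pm i\lambda r}$ by $\pm i\lambda$ and flips the relative sign between the two cases. Applying the functional calculus of $\Delta_N$ on each Hodge summand gives $\mathcal S^t_{1},\mathcal S^t_2$ and the diagonal structure. I also need to check that the $O(x)$ remainder in Definition~\ref{def:phase-scattering} is consistent. This follows from the next term in the Hankel expansion.

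\textbf{Generalized eigenforms and uniqueness.} Next I would show that $\mathcal S^t_M(\lambda)$ is well defined. Given $\Phi$, construct a co-closed $u$ with incoming data $\Phi$. The approach is to take the model solution $u_0$, cut it off with $\chi$ supported near infinity, and set
\[
u=\chi u_0-R_{\lambda-i0}\bigl[(\Delta_p-\lambda^2)\chi u_0\bigr],
\]
using the limiting absorption principle, i.e. the extension of $R_\lambda$ to the real axis. To get a co-closed $u$, replace $\chi u_0$ by $\delta(\chi v_0)$ for a model potential $v_0$; then $u$ is co-closed since $R_\lambda$ commutes with $\delta$. Uniqueness uses Proposition~\ref{prop:no-positive-eigenforms}. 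If both $\Phi=0$ and $\Psi=0$, a boundary pairing (Green's formula on $\{x>\epsilon\}$) shows that $u$ decays rapidly. A Rellich-type argument then puts $u\in L^2$, so $u=0$.

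\textbf{Perturbation.} Write $u=\chi u_0+w$ with $w$ outgoing. The error $(\Delta_{p,g}-\lambda^2)\chi u_0$ consists of a compactly supported part plus terms proportional to $h-h_0$ acting on $u_0$. Under \eqref{closeness} these are $O(x^\infty)$ times oscillatory functions. The outgoing part of $w$ is then read off by the boundary pairing formula
\[
\alpha_M(\lambda)\Phi=c\int_M \langle F,\overline{u_0(\cdot,\Phi^*)}\rangle,
\]
that is, it is given by pairing the error against model generalized eigenforms, which are polyhomogeneous in $(x,y)$. The Schwartz kernel is therefore smooth in $(y,y')$, so $\alpha_M$ is smoothing. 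Under \eqref{closeness2} only $x^{n_0}$ decay is available. Each $y$-derivative of the eigenform costs a factor that must be absorbed by powers of $x$ against the volume $x^{-n-1}dx$, which yields the mapping property $H^{-s}\to L^2$ for $s<n_0-1$.

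The main obstacle I expect is this last step: controlling the non-compact perturbation terms, namely showing that $R_{\lambda-i0}$ applied to $O(x^{N})$ oscillatory forms gives outgoing expansions with remainders uniform enough to extract a smooth kernel. I would first try to handle it by iterating the model parametrix to push the error to $O(x^\infty)$ before applying the resolvent.
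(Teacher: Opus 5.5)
Your treatment of the exact model agrees with the paper's Proposition~\ref{scatteringbase}. The first-type co-closed modes give Bessel order $\ell_p(\sigma)$. The co-closed modes with closed tangential part are $\delta(F\,dx\wedge\xi)$, which is the paper's $B_{p+1}$ applied to second-type $(p+1)$-forms, and they have order $\ell_{p-1}(\sigma)=\sqrt{(\frac n2-p)^2+\sigma}$. The extra radial derivative turns $-i$ into $i$.

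The gap is in the perturbation step. Your representation
\[
\alpha_M(\lambda)\Phi=c\int_M\langle F,\overline{u_0(\cdot,\Phi^*)}\rangle ,\qquad F=(\Delta_{p,g}-\lambda^2)\chi u_0,
\]
pairs $F$ against \emph{model} eigenforms, and it is false. Test it on the compact perturbation $\widetilde M$ with $\chi$ supported in the exact end. Then $F=[\Delta_{g_0},\chi]u_0$ is the same as on $\widetilde M_0$. There, $\chi u_0-u_0$ is compactly supported, lies in the Friedrichs domain, and solves $(\Delta_{g_0}-\lambda^2)(\chi u_0-u_0)=F$. Green's formula then makes your pairing vanish identically. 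So your formula would give $\alpha_{\widetilde M}(\lambda)=0$ for every compactly supported perturbation, which is wrong. The correct Green's-formula representation of the outgoing coefficient of $w=-R_{\lambda+i0}F$ pairs $F$ against the generalized eigenforms $E_{\lambda,M}$ of $M$ itself. These are not explicit, so the polyhomogeneity of the model eigenforms does not give you the smooth kernel.

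To close the argument you need two facts:
\begin{enumerate}
\item[(i)] $R_{\lambda+i0}$ maps compactly supported plus $O(x^\infty)$ data to forms $e^{i\lambda/x}x^{\frac{n-1}{2}}\Theta$ with $\Theta$ smooth up to $x=0$. This is what makes an outgoing coefficient exist and justifies the pairing identity.
\item[(ii)] That leading coefficient depends on the data through a smooth kernel on $N\times N$.
\end{enumerate}
This is exactly the step you flag as the main obstacle. Iterating the model parametrix does not address it: under \eqref{closeness} the error is already $O(x^\infty)$, and what is at stake is the action of the true resolvent on it.

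The paper obtains (i)--(ii) from Proposition~14 of \cite{melbook2}, with uniqueness from Proposition~\ref{prop:no-positive-eigenforms}. This defines $A^{\mathrm{abs}}_{M,\lambda}$ as a smoothing operator. Then $\alpha_M(\lambda)=C_+A^{\mathrm{abs}}_{M,\lambda}C_-^{-1}$ with $C_\pm$ diagonal unitary, and the \eqref{closeness2} case is imported from \cite{JSB}. Without that theorem, or an actual proof of (i)--(ii), your smoothing and $H^{-s}\to L^2$ claims are unsupported.

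Two smaller points:
\begin{itemize}
\item With the paper's convention ($e^{i\lambda/x}$ outgoing) the correction must use $R_{\lambda+i0}$. Using $R_{\lambda-i0}$ adds an incoming term and changes $\Phi$.
\item For $\mathcal S^t_M$ to be well defined, you must use the boundary pairing to show $\Phi=0\Rightarrow\Psi=0$, not assume both vanish.
\end{itemize}
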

This is the differential-form analogue of the main theorem of \cite{MZInvent}; unlike that result, which uses Fourier integral operators, the proof here proceeds almost entirely through explicit generalized eigenform calculations. 
\begin{cor}\label{lengthscor}
Assume that the metrics on $M$ and $\widetilde M$ satisfy \eqref{closeness}. Then, for $\lambda\in\mathbb R^+$, the difference
\[
\widetilde\alpha(\lambda):=\mathcal{S}^t_M(\lambda)-\mathcal{S}^t_{\widetilde M}(\lambda)
\]
is a smoothing operator on $C^{\infty}(N;\Lambda^pT^*N)$. Under assumption \eqref{closeness2}, $\widetilde\alpha(\lambda)$ instead extends to a bounded operator
\[
\widetilde\alpha(\lambda):H^{-s}(N;\Lambda^pT^*N)\longrightarrow L^2(N;\Lambda^pT^*N),
\qquad s<n_0-1.
\]
\end{cor}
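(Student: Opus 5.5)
The corollary should follow from Theorem~\ref{lengths} applied twice, once to $M$ and once to $\widetilde M$, with the same exact model $\widetilde M_0$.

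\textbf{Step 1: $\widetilde M$ is a valid choice of ``$M$'' in Theorem~\ref{lengths}.} By construction $\widetilde M$ is a compact perturbation of $\widetilde M_0$: outside a compact set its end is exactly $Z=(0,\epsilon_1)\times N$ with metric $g_0$. Hence on $Z$ its metric has the form \eqref{conicmetric} with $h(x)\equiv h_0$, so $h_0-h(x)=0$. This trivially satisfies \eqref{closeness}, and also \eqref{closeness2} for every $n_0$. The hypothesis that the metric on $\widetilde M$ satisfies \eqref{closeness} is therefore automatic, and we only need it for $M$. The other standing inputs of the theorem also hold for $\widetilde M$: the meromorphic continuation of the resolvent from \cite{melbook2}, and the absence of positive embedded eigenvalues from Proposition~\ref{prop:no-positive-eigenforms}. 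So for $\lambda\in\mathbb R^+$ the tangential scattering matrix $\mathcal S^t_{\widetilde M}(\lambda)$ of Definition~\ref{def:phase-scattering} is well defined.

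\textbf{Step 2: subtract the two expansions.} Theorem~\ref{lengths} gives
\[
\mathcal S^t_M(\lambda)=\mathcal S^t_{\widetilde M_0}(\lambda)+\alpha_M(\lambda),\qquad
\mathcal S^t_{\widetilde M}(\lambda)=\mathcal S^t_{\widetilde M_0}(\lambda)+\alpha_{\widetilde M}(\lambda).
\]
The model term $\mathcal S^t_{\widetilde M_0}(\lambda)$ is the same in both, since it is the explicit diagonal operator built from $\Delta_N$ on $(N,h_0)$ and depends only on $(N,h_0)$, $n$, $p$. Subtracting,
\[
\widetilde\alpha(\lambda)=\alpha_M(\lambda)-\alpha_{\widetilde M}(\lambda).
\]
This is where care is needed. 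The two expansions must be taken with respect to the same model and the same projection $\Pi_p$, i.e.\ the same boundary defining function $x$ on the common end $Z$, so that the model terms cancel exactly rather than up to some non-smoothing conjugation. I would check this against the setup: both manifolds share the end identification $Z=(0,\epsilon_1)\times N$ and the same $x$. If they did not, one would first have to change boundary defining function, which only affects the scattering matrix by a phase at leading order. I expect this bookkeeping to be the only real obstacle.

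\textbf{Step 3: conclude.}
\begin{itemize}
\item Under \eqref{closeness}: $\alpha_M$ and $\alpha_{\widetilde M}$ are both smoothing on $C^\infty(N;\Lambda^pT^*N)$, and the smoothing operators form a vector space, so $\widetilde\alpha(\lambda)$ is smoothing.
\item Under \eqref{closeness2} for $M$: $\alpha_M(\lambda)$ extends to a bounded map $H^{-s}\to L^2$ for $s<n_0-1$. For $\widetilde M$, Step 1 shows \eqref{closeness} holds, so $\alpha_{\widetilde M}(\lambda)$ is smoothing. A smoothing operator on the compact manifold $N$ is bounded $H^{-s}\to L^2$ for every $s$. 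The difference is therefore bounded $H^{-s}(N;\Lambda^pT^*N)\to L^2(N;\Lambda^pT^*N)$ for $s<n_0-1$.
\end{itemize}
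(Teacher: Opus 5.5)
Your proposal is correct and follows the paper's route: the paper deduces the corollary directly from \eqref{alphatilde}, $\widetilde\alpha(\lambda)=C_+\bigl(A^{\mathrm{abs}}_{M,\lambda}-A^{\mathrm{abs}}_{\widetilde M,\lambda}\bigr)C_-^{-1}=\alpha_M(\lambda)-\alpha_{\widetilde M}(\lambda)$, where the common model term $C_+C_-^{-1}$ cancels exactly as in your Step 2. You also observe, correctly and beyond what the paper spells out, that $\widetilde M$ satisfies \eqref{closeness} automatically; this reduces the \eqref{closeness2} case to a smoothing operator plus a bounded map $H^{-s}\to L^2$.
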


\section{The Laplacian on forms on the exact manifold $\widetilde M_0$}

In this section we analyze the Laplacian on forms for the exact manifold $\widetilde M_0$  with metric $g_0=x^{-4} dx^2 + x^{-2} h_0$.
Note that the change of variables $x=1/r$ transforms the end into a warped product with metric $g=dr^2 + r^2 h_0$. Following Donnelly~\cite{donnelly}, the Hodge decomposition on $N$ allows us to write any $L^2$-integrable $p$-form on $(\widetilde M_0, g)$ as
\begin{equation} \label{decomp}
\psi= \omega_1 \oplus dx\wedge \omega_2 \oplus( \tilde{\omega}_1 + dx\wedge \tilde{\omega}_2),
\end{equation}
where $\omega_1$ (resp. $\tilde{\omega}_1$) is a co-closed (resp. closed) $p$-form on $N$ parametrized by $x$, and $\omega_2$ (resp. $\tilde{\omega}_2$) is a closed (resp. co-closed) $(p-1)$-form on $N$ parametrized by $x$.

Let $x^2\partial_1 = x^2\partial_{x_1} =  x^2\partial_{x}$ and $x\partial_j = x\partial_{x_j}$ for $j>1$ be an orthonormal frame of $T\widetilde{M}_0$, and let $x^{-2}dx^1$ and $x^{-1}dx^j$ for $j>1$ be the dual co-frame of $T^*\widetilde{M}_0$. We write $d$ and $d_N$ for the exterior derivatives on $M$ and $N$ respectively, and $\delta$, $\delta_N$ for their $L^2$-adjoints  on  $(M,g)$ and $(N,h_0)$ respectively.

Following \eqref{decomp}, via the Hodge decomposition on $N$, any form in $L^2(\widetilde M_0 ; \Lambda^p T^*\widetilde{M}_0)$ can be written as a linear combination of forms of the following types:
\begin{align}
&f(x)\eta_1& \label{type1}\\
&f(x)\,dx\wedge \eta_2& \label{type2}\\
&\frac{1}{\sqrt{\sigma}}f_1(x)d_N\eta_3+f_2(x)\, dx\wedge \eta_3,  \label{type3}
\end{align}
where $\eta_1,\eta_2,\eta_3$ are normalized eigenforms on $N$ with eigenvalue $\sigma$, with $\eta_2$ closed and $\eta_1,\eta_3$ co-closed. As Cheeger~\cite{cheegerheat} points out, the Laplacian also leaves invariant $p$-forms of the type
\begin{equation}
f_1  \eta_4+  f_2  \tfrac{1}{\sqrt{\sigma}} \, dx\wedge \delta_N \eta_4  \label{type4}
\end{equation}
where $\eta_4$ is a closed eigenform on $N$ with eigenvalue $\sigma$.

We examine the action of the Laplacian on each of these types of forms, referred to as forms of the {\it first, second, third} and {\it fourth} type respectively. Harmonic eigenforms are included in forms of the first and second type. Note that forms of the first and second kind are dual with respect to the Hodge star operator, and forms of the third and fourth type are also dual. The action on forms of the first and second type is computed in the following lemma.
\begin{lem} \label{lem1}
Let $'$ denote the derivative in the $x$ variable. We define the operators $\Delta_1$ and $\Delta_2$ on generic $C^2(0,\epsilon_1)$ functions of $x$, $f(x)$ as follows:
\begin{equation}
\Delta_1 f=  - x^4 f''  + (n-2p-3) x^3  f'   + \sigma \, x^2 f
\end{equation}
and
\begin{equation}
\Delta_2 f=    -x^4 f''+(n-2p-5)x^3 f' + (3(n-2p-1)+\sigma) \, x^2 f.
\end{equation}
Let $\eta_1$ be a co-closed $p$-eigenform on $N$ with corresponding eigenvalue $\sigma$, and set $\psi_1=f(x)\,\eta_1$ as above. Then, the action of the Laplacian on $\psi_1$ is equivalent to the action of the operator $\Delta_1$:
\[
\Delta (f \eta_1) = (\Delta_1 f)  \eta_1.
\]
Let $\eta_2$ be a closed $(p-1)$-eigenform with corresponding eigenvalue $\sigma$, and set $\psi_2=f(x)\,dx\wedge \eta_2$. Then the action of the Laplacian on $\psi_2$ is equivalent to the action of the operator $\Delta_2$:
 \[
\Delta (f \,  dx\wedge\eta_2) =  (\Delta_2 f) \,   dx\wedge\eta_2.
\]
 
\end{lem}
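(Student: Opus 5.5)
The plan is to compute $\Delta=d\delta+\delta d$ directly on the exact end, where $g_0=x^{-4}dx^2+x^{-2}h_0$. I will treat the first type by hand and then obtain the second type from it by Hodge duality.

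\textbf{Step 1: the Hodge star on the end.} The Riemannian volume form is $x^{-n-1}\,dx\wedge \mathrm{dvol}_{N}$. Pointwise norms scale as $|\alpha|_{g_0}=x^{q}|\alpha|_N$ for a tangential $q$-form, and as $|dx\wedge\beta|_{g_0}=x^{q+2}|\beta|_N$ for a tangential $q$-form $\beta$. Matching $\alpha\wedge *\alpha=|\alpha|^2\,\mathrm{dvol}_{g_0}$ then gives
\[
*\alpha=\pm x^{2q-n+1}\,dx\wedge *_N\alpha,\qquad *(dx\wedge\beta)=\pm x^{2q-n+3}\,*_N\beta .
\]
The signs are the standard $(-1)^{q}$-type factors. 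From these formulas and $\delta=\pm *d*$ I will derive the two identities actually needed. For a tangential form $\alpha$ (depending on $x$) one has $\delta\alpha=x^{2}\delta_N\alpha$. For $dx\wedge\beta$, the codifferential $\delta(dx\wedge\beta)$ equals $-x^{2}\,dx\wedge\delta_N\beta$ plus a tangential term of the form $-x^{n+1-2q}\,\partial_x\big(x^{2q-n+3}\beta\big)$, up to the sign conventions. The exponent bookkeeping in this step is exactly where the coefficients $(n-2p-3)$ and $(n-2p-5)$ will come from.

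\textbf{Step 2: first type.} Take $\psi_1=f\eta_1$ with $\delta_N\eta_1=0$ and $\Delta_N\eta_1=\sigma\eta_1$.
\begin{itemize}
\item By Step 1, $\delta\psi_1=x^2 f\,\delta_N\eta_1=0$.
\item The exterior derivative splits as $d\psi_1=f'\,dx\wedge\eta_1+f\,d_N\eta_1$.
\item The piece $\delta(f\,d_N\eta_1)$ equals $x^2 f\,\delta_Nd_N\eta_1=\sigma x^2 f\,\eta_1$, since $\delta_N\eta_1=0$ gives $\delta_Nd_N\eta_1=\Delta_N\eta_1$.
\item The piece $\delta(f'\,dx\wedge\eta_1)$ has no $dx\wedge\delta_N\eta_1$ part, again because $\eta_1$ is co-closed. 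Its tangential part is $-x^{n+1-2p}\,\partial_x\!\big(x^{2p-n+3}f'\big)\eta_1=\big(-x^4f''+(n-2p-3)x^3f'\big)\eta_1$.
\end{itemize}
Adding the two pieces yields $\Delta_1 f$. I will double-check the exponent $2p-n+3$ against the known Euclidean case ($N=S^{n-1}$, $p=0$), where $\Delta_1$ must reduce to the scalar Laplacian $-x^4\partial_x^2+(n-3)x^3\partial_x+x^2\Delta_{S^{n-1}}$ written in the coordinate $x=1/r$.

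\textbf{Step 3: second type via duality.} Let $\eta_2$ be a closed $(p-1)$-eigenform. By Step 1,
\[
*(f\,dx\wedge\eta_2)=\pm x^{2p-n+1}f\,*_N\eta_2,
\]
and $*_N\eta_2$ is a co-closed $(n-p)$-eigenform on $N$ with the same eigenvalue $\sigma$. Since $*$ commutes with $\Delta$, Step 2 applied in degree $n-p$ gives
\[
\Delta_2 f=x^{-k}\,\Delta_1^{(n-p)}\big(x^{k}f\big),\qquad k=2p-n+1 .
\]
Here $\Delta_1^{(n-p)}$ has first-order coefficient $(2p-n-3)$. Conjugating by $x^{k}$ produces:
\begin{itemize}
\item the first-order coefficient $(2p-n-3)-2k=n-2p-5$;
\item the zeroth-order term $\big(-k(k-1)+(2p-n-3)k+\sigma\big)x^2$.
\end{itemize}
With $k=2p-n+1$ the zeroth-order coefficient simplifies to $3(n-2p-1)+\sigma$, as claimed. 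This serves as a consistency check on $k$.

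\textbf{Main obstacle.} I expect the only real difficulty to be the sign and exponent conventions in Step 1: the correct power of $x$ in $*$, and the sign in $\delta=(-1)^{n(q+1)+1}*d*$. A wrong exponent shifts the coefficients in the final operators. I will fix these by testing each formula on $q=0$ and $q=n-1$, where $\delta$ of a function and of a top-degree form are known explicitly. If the duality route in Step 3 produces sign ambiguities, the fallback is to compute $d\delta+\delta d$ on $f\,dx\wedge\eta_2$ directly, using $d_N\eta_2=0$ and $\delta_Nd_N\eta_2=0$, in exactly the same way as Step 2.
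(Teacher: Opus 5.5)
Your proof is correct, and Step 2 is the paper's own computation for the first type. There is one slip in Step 1. The star of a tangential $q$-form is $*\alpha=\pm x^{2q-n-1}\,dx\wedge *_N\alpha$, not $\pm x^{2q-n+1}\,dx\wedge *_N\alpha$. This follows from $|\alpha|_{g_0}^2\,\mathrm{dvol}_{g_0}=x^{2q-n-1}|\alpha|_N^2\,dx\wedge\mathrm{dvol}_N$; alternatively, $**=\pm1$ together with your correct formula $*(dx\wedge\beta)=\pm x^{2q-n+3}*_N\beta$ forces it. Taken literally, your exponent would put $x^4$ instead of $x^2$ in $\delta\alpha=x^2\delta_N\alpha$ and in the normal part $-x^2\,dx\wedge\delta_N\beta$. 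However, the formulas you actually state and use are the correct ones, and your proposed $q=0$ test catches the discrepancy immediately. Step 3 uses only the $dx\wedge\beta$ formula, and the conjugation is right: with $a=2p-n-3$ and $k=2p-n+1$ one gets $a-2k=n-2p-5$ and $-k(k-1)+ak=-3k=3(n-2p-1)$.

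The genuine difference is in the second type. The paper computes $d\psi_2=-f\,dx\wedge d_N\eta_2$ and $\delta\psi_2=[-x^4f'+(n-2p-1)x^3f]\eta_2-x^2f\,dx\wedge\delta_N\eta_2$ directly for an arbitrary $(p-1)$-eigenform. It arrives at $\Delta\psi_2$ with an extra term $2x^3f\,d_N\eta_2$ (and, for the first type, $2xf\,dx\wedge\delta_N\eta_1$), and only then imposes closedness (resp. co-closedness).

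Your Hodge-duality route is shorter and explains structurally why $\Delta_2$ is just $\Delta_1$ in the complementary degree $n-p$, conjugated by $x^{2p-n+1}$. The odd-looking constant $3(n-2p-1)$ is then a conjugation artifact, consistent with $\tilde\Delta_1$ and $\tilde\Delta_2$ later having identical form.

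What the paper's direct route buys is the general formulas, including the cross terms and the explicit $\delta\psi_2$. These are exactly what produce the couplings $2x^3f_2$ and $2\sigma x f_1$ in $\Delta_3,\Delta_4$ of Proposition~\ref{prop1}. They also give the matrices $A_p$, $B_p$, which the paper reads off from the proof of Lemma~\ref{lem1}. Your argument treats only co-closed $\eta_1$ before dualizing, so you would have to redo Step 2 for general $\eta_1$ to recover them.
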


\begin{proof}
For $f\in C^2(0,\epsilon_1)$ and any $p$-form $\eta_1$ on $N$,
\begin{equation*}
\begin{split}
d\psi_1 & =d(f \eta_1)=f'  \,dx\wedge\eta_1+f \,d_N\eta_1  \\
\delta\psi_1 &=\delta(f \eta_1)= x^2 f \,\delta_N\eta_1
\end{split}
\end{equation*}
which allows us to compute
\begin{equation*}
\begin{split}
\Delta \psi_1
= \Delta(f \,\eta_1)=&  \left[ - x^4 f''  + (n-2p-3) x^3  f' \right]\,\eta_1 +  x^2 f \, \Delta_N\eta_1\\
& +   2x f  \, dx \wedge  \delta_N \eta_1 .
\end{split}
\end{equation*}
Whenever $\eta_1$ is a co-closed $p$-eigenform with $\delta_N\eta_1=0$ and eigenvalue $\sigma$,
\[
\Delta (f \eta_1) = (\Delta_1 f)  \eta_1.
\]
Similarly, for a $(p-1)$-eigenform $\eta_2$ on $N$,
\begin{equation*}
\begin{split}
d\psi_2 & =d(f\, dx\wedge\eta_2)= - f  \,dx\wedge d_N\eta_2  \\
\delta\psi_2 &=\delta(f  \, dx\wedge \eta_2)= \left[-x^4 f'  +(n-2p-1)\, x^3 f  \right] \eta_2 - x^2 f  \, dx \wedge \delta_N \eta_2.
\end{split}
\end{equation*}
Hence
\begin{equation*}
\begin{split}
\Delta  \psi_2   =  \Delta(f \,dx\wedge\eta_2) =&  \left[ -x^4 f''+(n-2p-5)x^3 f' + 3(n-2p-1) x^2 f\right] \, dx \wedge  \eta_2\\
& + x^2 f \, dx\wedge\Delta_N \eta_2 + 2 x^3 f  d_N\eta_2.
\end{split}
\end{equation*}
Therefore, whenever $\eta_2$ is a  closed $(p-1)$-eigenform on $N$ with $d_N\eta_2=0$ and eigenvalue $\sigma$,  we get
\[
\Delta (f \,  dx\wedge\eta_2) =  (\Delta_2 f) \,   dx\wedge\eta_2. \qedhere
\]
\end{proof}

We now continue to the action of the Laplacian on forms of the third type. 
\begin{prop}\label{prop1}
The action of the Laplacian on forms of the third type where $\eta_3$ is a co-closed $(p-1)$-eigenform with corresponding eigenvalue $\sigma\neq 0$ is equivalent to the differential operator
\[
\Delta_3\left[
          \begin{array}{c}
            f_1 \\
            f_2 \\
          \end{array}
        \right] =
        \left[
          \begin{array}{c}
           \Delta_1 f_1 +  2 \sqrt{\sigma} \,  x^3 f_2  \\
            \Delta_2 f_2+ 2 \sqrt{\sigma}\, x \, f_1  \\
          \end{array}
        \right]
\]
where $\Delta_1$ and $\Delta_2$ are defined in Lemma \ref{lem1}, on generic $C^2(0,\epsilon_1)$ functions of $x$, $f_1$ and $f_2$ respectively.
\end{prop}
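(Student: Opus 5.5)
The plan is to use linearity and the computations already done in Lemma \ref{lem1}. Write $\psi_3 = \psi_a + \psi_b$ with $\psi_a = \sigma^{-1/2} f_1\, d_N\eta_3$ and $\psi_b = f_2\, dx\wedge\eta_3$, and compute $\Delta\psi_a$ and $\Delta\psi_b$ separately. The proof of Lemma \ref{lem1} gives formulas for $\Delta(f\eta)$ with $\eta$ an \emph{arbitrary} $p$-form on $N$ and for $\Delta(f\,dx\wedge\eta)$ with $\eta$ an \emph{arbitrary} $(p-1)$-form on $N$. The closedness and co-closedness hypotheses were only used at the last step, to kill the cross terms. So we may reuse those general formulas verbatim; no closedness assumption is needed at this stage.

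\emph{First piece.} Apply the general formula to $\eta = \sigma^{-1/2} d_N\eta_3$. This $p$-form is co-closed, since $\delta_N d_N\eta_3 = \Delta_N\eta_3 - d_N\delta_N\eta_3 = \sigma\eta_3$ would not vanish. So we must keep the cross term. Because $d_N$ commutes with $\Delta_N$, we have $\Delta_N(d_N\eta_3) = \sigma\, d_N\eta_3$, which produces the $\Delta_1 f_1$ contribution in the $d_N\eta_3$ slot. The cross term $2x f\, dx\wedge\delta_N\eta$ becomes $2x f_1\sigma^{-1/2}\, dx\wedge\delta_N d_N\eta_3$. Using $\delta_N\eta_3 = 0$, this equals $2\sqrt{\sigma}\, x f_1\, dx\wedge\eta_3$, which lands exactly in the second slot.

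\emph{Second piece.} Apply the general formula for $\Delta(f\,dx\wedge\eta_2)$ with $\eta_2 = \eta_3$. Here $\Delta_N\eta_3 = \sigma\eta_3$, so the $dx\wedge\eta_3$ component is $\Delta_2 f_2$. The leftover term $2x^3 f_2\, d_N\eta_3$ must be rewritten as $2\sqrt{\sigma}\, x^3 f_2 \cdot \sigma^{-1/2} d_N\eta_3$. This is the coefficient $2\sqrt\sigma\, x^3 f_2$ in the first slot, relative to the normalized basis element $\sigma^{-1/2}d_N\eta_3$.

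Adding the two pieces and reading off coefficients with respect to the pair $\bigl(\sigma^{-1/2}d_N\eta_3,\ dx\wedge\eta_3\bigr)$ gives the matrix operator $\Delta_3$. The invariance of the span also follows from this computation. The main obstacle is bookkeeping: one has to check that the general (unsimplified) formulas from the proof of Lemma \ref{lem1} carry the right signs and powers of $x$ for the cross terms $2xf\,dx\wedge\delta_N\eta$ and $2x^3 f\, d_N\eta$. These signs depend on the ordering convention $dx\wedge\cdot$ and on the metric weights $x^{-4}$, $x^{-2}$ in $\delta$. If there is any doubt, I would recheck them by computing $d\delta + \delta d$ directly in the orthonormal coframe $x^{-2}dx$, $x^{-1}dy^j$. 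Finally, $\sigma\neq0$ is needed so that the normalization $\sigma^{-1/2}$ makes sense and $d_N\eta_3\neq 0$.
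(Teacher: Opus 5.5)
Your proposal is correct and is essentially the paper's proof: the paper also reuses the general formulas for $\Delta(f\eta)$ and $\Delta(f\,dx\wedge\eta)$ from the proof of Lemma~\ref{lem1} together with $\delta_N d_N\eta_3=\sigma\eta_3$. The only difference is that the paper first computes with the unnormalized pair $(f_1\, d_N\eta_3,\ f_2\,dx\wedge\eta_3)$ and then rescales $f_1$ by $\sigma^{-1/2}$, whereas you work in the normalized basis from the start. One wording slip: $\sigma^{-1/2}d_N\eta_3$ is closed but \emph{not} co-closed, which is exactly why the cross term $2xf\,dx\wedge\delta_N\eta$ survives, so your sentence ``This $p$-form is co-closed'' should read ``is not co-closed''.
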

\begin{proof}

Taking a $(p-1)$-eigenform $\eta_3$ with $\delta_N\eta_3=0$ and eigenvalue $\sigma$,
\begin{equation*}
  \begin{split}
\Delta (f_1 d_N\eta_3+ f_2 \, dx\wedge \eta_3) =   \left[\Delta_1 f_1 + 2 x^3 f_2 \right] d_N\eta_3  + \left[\Delta_2 f_2 + 2\sigma \, x \,f_1   \right] dx \wedge \eta_3
  \end{split}
\end{equation*}
since $\delta_N d_N\eta_3=\sigma \eta_3$. This defines an operator on the unnormalized pair $(f_1, f_2)$:
\[
\Delta_3\left[
          \begin{array}{c}
            f_1 \\
            f_2 \\
          \end{array}
        \right] =
        \left[
          \begin{array}{c}
            \Delta_1 f_1  + 2 x^3 f_2\\
            \Delta_2 f_2 + 2 \sigma \,  x \, f_1.  \\
          \end{array}
        \right].
\]
Replacing $(f_1, f_2)$ by $(\tfrac{1}{\sqrt\sigma}f_1, f_2)$ to match the normalized form \eqref{type3} recovers the $\sqrt{\sigma}$ coefficients in the statement.
Note that for forms of the third type, we could have equivalently considered a $p$-eigenform $\eta_4$ with $d_N\eta_4=0$ such that
\begin{equation*}
  \begin{split}
\Delta (f_1  \eta_4+ f_2 \, dx\wedge \delta_N \eta_4) =& \left[\Delta_1 f_1 +  2 \sigma  \, x^3  f_2   \right]  \eta_4  + \left[\Delta_2 f_2   + 2x f_1  \right] dx \wedge \delta_N \eta_4
\end{split}
\end{equation*}
since in this case $d_N\delta_N \eta_4=\sigma \eta_4$, leading to the operator $\Delta_4$:
\[
\Delta_4\left[
          \begin{array}{c}
            f_1 \\
            f_2 \\
          \end{array}
        \right] =
        \left[
          \begin{array}{c}
           \Delta_1 f_1  + 2 \sigma  \, x^3f_2  \\
            \Delta_2 f_2  + 2x \,f_1  \\
          \end{array}
        \right].
\]

For  $\sigma\neq 0$ we can show that the two operators $\Delta_3, \Delta_4$ are equivalent after observing that for $\eta_3$ and $\eta_4$ as above,
\begin{equation*}
  \begin{split}
 \Delta (  f_1 \tfrac{1}{\sqrt{\sigma}} \, d_N\eta_3+ f_2 \, dx\wedge \eta_3)
       & = \left[\Delta_1 f_1 + 2 \sqrt{\sigma} \,  x^3 f_2 \right] \tfrac{1}{\sqrt{\sigma}} \, d_N\eta_3  + \left[\Delta_2 f_2 + 2 \sqrt{\sigma} \,x \, f_1 \right] \, dx \wedge \eta_3
  \end{split}
\end{equation*}
and
\begin{equation*}
  \begin{split}
  \Delta (f_1  \eta_4+  f_2  \tfrac{1}{\sqrt{\sigma}} \, dx\wedge \delta_N \eta_4)
     =& \left[\Delta_1 f_1  + 2 \sqrt{\sigma}\,  x^3 f_2 \right]  \eta_4 + \left[\Delta_2 f_2 + 2 \sqrt{\sigma}\, x \, f_1 \right] \tfrac{1}{\sqrt{\sigma}} \, dx \wedge \delta_N \eta_4.
 \end{split}
\end{equation*}
It therefore suffices to work with $\Delta_3$ alone.
\end{proof}

When $\sigma=0$, the off-diagonal coupling in $\Delta_3$ vanishes and the two components decouple, reducing to the operators $\Delta_1$ and $\Delta_2$ studied in Lemma~\ref{lem1}.

\section{Model eigenforms on the exact manifold $\widetilde M_0$}
The Hodge decomposition principle allows us to solve the eigenvalue problem for the Laplacian on the  manifold $\widetilde M_0$  which has exact metric everywhere. This amounts to finding the eigenforms for each of the operators $\Delta_1, \Delta_2, \Delta_3$.  For $\Delta_3$, it is more convenient to work with the rescaled operator $\tilde{\Delta}_3$ in \eqref{eq7}.
We use the following rescaling:
\begin{equation} \label{rescal}
\begin{split}
   &  f_1= x^\alpha H_1 \quad \text{with} \quad \alpha=\frac{n-2p-1}{2} \\
   & f_2= x^\beta H_2 \quad \text{with} \quad \beta=\frac{n-2p-3}{2}.
\end{split}
\end{equation}
This substitution yields the operators:
\begin{equation} \label{eq5}
\begin{split}
&\tilde{\Delta}_1H_1= -  x^4 H_1''  -2 \, x^3 \,H_1'    + (\alpha(\alpha-1)+\sigma) \, x^2 \,H_1 \\
&\tilde{\Delta}_2 H_2 = -  x^4 H_2''    -2 \, x^3\, H_2'   + ((\beta+2)(\beta+3)+\sigma) \, x^2  \,  H_2\\
&\tilde{\Delta}_3\left[
          \begin{array}{c}
            H_1 \\
            H_2 \\
          \end{array}
        \right] =
        \left[
          \begin{array}{c}
           \tilde{ \Delta}_1 H_1 +2 \sqrt{\sigma}\, x^2 \, H_2  \\
            \tilde{\Delta}_2 H_2 +2\sqrt{\sigma}\, x^2 \, H_1 \\
          \end{array}
        \right].
\end{split}
\end{equation}	
These operators are compatible with the rescaling \eqref{rescal} in the sense that $\tilde\Delta_i$ acts on the rescaled function $H_i$ rather than on $f_i$, as verified by
\begin{equation} \label{eq6}
\begin{split}
&\Delta_1(x^\alpha {H})= x^\alpha\left[ -  x^4  {H}''  -2 \, x^3 \, {H}'    + (\alpha(\alpha-1)+\sigma) \, x^2 \, {H} \right] \\
&\Delta_2 (x^\beta{H})  = x^\beta \left[  -  x^4  {H}''    -2 \, x^3\, {H}'   + ((\beta+2)(\beta+3)+\sigma) \, x^2  \,  {H} \right]\\
&\Delta_3\left[
          \begin{array}{c}
            x^\alpha {H}_1 \\ \\
            x^\beta {H}_2 \\
          \end{array}
        \right] =
        \left[
          \begin{array}{c}
          x^\alpha\left( \tilde{ \Delta}_1 {H}_1 +2 \sqrt{\sigma}\, x^2 \,  {H}_2 \right) \\
        \\
           x^\beta \left( \tilde{\Delta}_2  {H}_2 +2\sqrt{\sigma}\, x^2 \,  {H}_1 \right) \\
          \end{array}
        \right].
\end{split}
\end{equation}	
The eigenvalue problem $\tilde\Delta_3[H_1,H_2]^T = \lambda^2[H_1,H_2]^T$ for $\lambda^2\geq 0$ therefore takes the form
\begin{equation}\label{eq7}
\begin{split}
&\tilde{\Delta}_3\left[
          \begin{array}{c}
           H_1 \\
            H_2 \\
          \end{array}
        \right] =
        \left[
          \begin{array}{c}
           \tilde{ \Delta}_1 H_1 +2 \sqrt{\sigma}\, x^2 \,  H_2   \\
         \tilde{\Delta}_2  H_2 +2\sqrt{\sigma}\, x^2 \,  H_1  \\
          \end{array}
        \right] =
        \left[
          \begin{array}{c}
          \lambda^{2} H_1 \\
          \lambda^{2}   H_2 \\
          \end{array}
        \right].
\end{split}
\end{equation}	
We now solve the eigenform problem. A similar decomposition also appears in \cite{cheegerheat}, Section~3, but we require explicit formulas to systematically develop the spectral calculus. 
\begin{prop} \label{propsol}
For a fixed $\sigma$ define $\ell_p=  \sqrt{\left(\frac{n}{2}-p-1\right)^2+\sigma}$, and let 
\begin{align} \label{eigenform-type1}
f_{1,p}(x,\lambda)=x^{\frac{n-2p-2}{2}}J_{\ell_p}\left(\frac{\lambda}{x}\right), 
\end{align}
and
\begin{equation}
\begin{split} \label{type2sol}
f_{2,p}(x,\lambda)&=x^{\frac{n-2p-4}{2}}J_{\ell_{p-2}}\left(\frac{\lambda}{x}\right).
\end{split}
\end{equation}
Then, $\Delta_i(f_i)=\lambda^{2} f_i$ for $i=1,2$ and $\lambda^2\in\mathbb{R}_0^+$.
In particular, if $\eta_1$ is a normalized co-closed $p$-eigenform on $N$ with eigenvalue $\sigma$ and $\eta_2$ is a normalized closed $(p-1)$-eigenform with the same eigenvalue, then
\[
\omega_{1,p} = f_{1,p}(x,\lambda)\eta_1, \quad \omega_{2,p} =  f_{2,p}(x,\lambda)\,dx\wedge \eta_2
\]
are $p$-eigenforms on $\widetilde M_0$ of the first and second type respectively, with eigenvalue $\lambda^2$. 

In addition $\omega_{1,p}$ is co-closed and $\omega_{2,p}$ is closed on $\widetilde M_0$. When $\xi$ is a closed $p$-eigenform on $N$ and $F(x,\lambda)\,dx\wedge\xi$ is a degree-$(p+1)$ eigenform of the second type, then $\delta(F(x,\lambda)\,dx\wedge \xi)$ is a co-closed $p$-eigenform of the third type.  
\end{prop}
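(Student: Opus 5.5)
The plan is to reduce each scalar eigenvalue equation $\Delta_i f=\lambda^2 f$ from Lemma~\ref{lem1} to Bessel's equation. I will use the Euler operator $\theta=x\partial_x$ and the variable $z=\lambda/x$, for which $\theta=-z\partial_z$ and hence $\theta^2=(z\partial_z)^2$.

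\emph{Step 1: $\Delta_1$.} Using $x^2f''=(\theta^2-\theta)f$, the equation $\Delta_1 f=\lambda^2 f$ becomes, after dividing by $x^2$,
\[
-\theta^2 f+(n-2p-2)\theta f+\sigma f=z^2 f .
\]
Next I substitute $f=x^aG$, using $\theta(x^aG)=x^a(\theta+a)G$. Choosing $a=\frac{n-2p-2}{2}$ removes the first-order term. What remains is
\[
(z\partial_z)^2G+\bigl(z^2-(a^2+\sigma)\bigr)G=0,
\]
which is Bessel's equation of order $\sqrt{a^2+\sigma}=\ell_p$. Taking the regular solution $G=J_{\ell_p}(\lambda/x)$ gives $f_{1,p}$.

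\emph{Step 2: $\Delta_2$.} The same computation applied to $\Delta_2$ gives
\[
-\theta^2 f+(n-2p-4)\theta f+\bigl(3(n-2p-1)+\sigma\bigr)f=z^2f .
\]
Here I take $a=\frac{n-2p-4}{2}$. The resulting Bessel order squared is
\[
a^2+3(n-2p-1)+\sigma=\tfrac14\bigl((n-2p)^2+4(n-2p)+4\bigr)+\sigma=\Bigl(\tfrac n2-p+1\Bigr)^2+\sigma=\ell_{p-2}^2 .
\]
This yields $f_{2,p}$.

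\emph{Step 3: eigenform statements.} With $f_{1,p}$ and $f_{2,p}$ in hand, Lemma~\ref{lem1} immediately shows that $\omega_{1,p}$ and $\omega_{2,p}$ are eigenforms with eigenvalue $\lambda^2$. The formulas in the proof of Lemma~\ref{lem1} give the remaining properties:
\[
\delta(f\eta_1)=x^2f\,\delta_N\eta_1=0,\qquad d(f\,dx\wedge\eta_2)=-f\,dx\wedge d_N\eta_2=0 .
\]
So $\omega_{1,p}$ is co-closed and $\omega_{2,p}$ is closed.

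\emph{Step 4: the last assertion.} Since $\delta$ commutes with $\Delta=d\delta+\delta d$, the form $\delta(F\,dx\wedge\xi)$ is again a $\lambda^2$-eigenform, now of degree $p$, and it is co-closed because $\delta^2=0$. The $\delta\psi_2$ formula in Lemma~\ref{lem1}, applied in degree $p+1$, shows
\[
\delta(F\,dx\wedge\xi)=\bigl[-x^4F'+(n-2p-3)x^3F\bigr]\xi-x^2F\,dx\wedge\delta_N\xi .
\]
With $\xi$ a closed eigenform, this is exactly of the form \eqref{type4}. By the equivalence of $\Delta_3$ and $\Delta_4$ in Proposition~\ref{prop1}, it is a form of the third type after normalisation.

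The only delicate points are bookkeeping ones. First, the degree shift $p\to p+1$ in Step 4 has to be tracked carefully. Second, the degenerate case $\sigma=0$ in Step 4 needs a separate remark: there $\delta_N\xi=0$, so the form reduces to one of the first type. Third, I would spell out the choice of the $J$-branch (regularity at $x=\infty$, i.e.\ the Friedrichs extension), although the statement itself only requires that the given functions solve the equations.
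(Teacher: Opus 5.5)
Your proposal is correct and is essentially the paper's argument. The paper likewise reduces $\Delta_i f=\lambda^2 f$ to Bessel's equation, via the parametric Bessel equation and the rescaling $f=x^{\alpha}H$ with $H=x^{-1/2}J_\nu(\lambda/x)$; this is your $f=x^{a}G$ with $a=\alpha-\tfrac12$, resp.\ $a=\beta-\tfrac12$. It also reads off co-closedness and closedness from the formulas in the proof of Lemma~\ref{lem1}, and obtains the last claim from $\delta\Delta=\Delta\delta$ and $\delta^2=0$, as you do.

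Your explicit formula for $\delta(F\,dx\wedge\xi)$ adds detail the paper leaves to Cheeger. To identify it as third type, the direct reason is not the operator equivalence of $\Delta_3$ and $\Delta_4$, but an identity of forms. For $\sigma\neq0$ one has $\xi=\sigma^{-1/2}d_N\eta_3$ with $\eta_3=\sigma^{-1/2}\delta_N\xi$ a normalized co-closed eigenform, so forms of type \eqref{type4} literally are forms of type \eqref{type3}.
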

\begin{proof}
That $f_{1,p}$ and $f_{2,p}$ satisfy the respective differential equations follows directly from \eqref{parambess} in the Appendix. Observe that the function  $H_{1,p} = x^{-\frac{1}{2}}J_{\ell_p}\left(\frac{\lambda}{x}\right)$ is a solution to the eigenvalue problem 
\[
\tilde{\Delta}_1 H =\lambda^2 H
\]
for $\Delta_1$ and $\alpha$ as defined in \eqref{rescal}, \eqref{eq5}. Moreover, $H_{2,p} = x^{-\frac{1}{2}}J_{\ell_{p-2}}\left(\frac{\lambda}{x}\right)$ is a solution to 
\[
\tilde{\Delta}_2 H =\lambda^2 H
\]
for $\Delta_2$ and $\beta$ as in \eqref{rescal}, \eqref{eq5}. That $\omega_{1,p}$ is co-closed and $\omega_{2,p}$ is closed follows immediately from the proof of Lemma~\ref{lem1}.

From \cite{cheegerheat}, Section~3, page~587, $\delta$ maps forms of the second type to forms of the third type, and $d$ maps forms of the first type to forms of the fourth type (defined in Prop. \ref{prop1}). More precisely, if $\xi$ is a closed $p$-eigenform on $N$ with eigenvalue $\sigma$ and $F(x,\lambda)$ is such that $F(x,\lambda)\,dx\wedge \xi$ is a degree-$(p+1)$ second-type form with eigenvalue $\lambda^2$, then $\delta(F(x,\lambda)\,dx\wedge \xi)$ is a co-closed $p$-form eigenfunction with eigenvalue $\lambda^2$: co-closedness follows from $\delta^2=0$, and the eigenvalue property from the fact that $\delta$ commutes with the Hodge Laplacian. Notice that a choice of $(p+1)$-form then changes the index of the solution to $\ell_{p-1}(\sigma)$ as well as the factor out front e.g. $$F(x,\lambda)=x^{\frac{n-2p-6}{2}}J_{\ell_{p-1}(\sigma)}\left(\frac{\lambda}{x}\right).$$ This is what we will use later to construct the generalized eigenfunctions. 
\end{proof}

\section{Generalized Eigenfunctions}

The goal of this section is to construct generalized eigenfunctions for the Helmholtz problem on the exact manifold $\widetilde M_0$ and then on the general manifold $M$, following the spectral calculus developed in \cite{OS}. We let $\langle\cdot, \cdot\rangle_{\widetilde{M}_0}$ be the inner product on $L^2(\widetilde{M}_0; \Lambda^pT^{*}\widetilde{M}_0)\oplus L^2(\widetilde{M}_0; \Lambda^{p-1}T^{*}\widetilde{M}_0)$.
\begin{lem} \label{basis}
For each eigenvalue $\sigma$ of the Laplacian on $N$ there exists a basis of co-closed and closed $p$-eigenforms for $L^2(N;\Lambda^p T^*N)$ which we denote by $\{\phi_{\sigma}^{(p)}\}$ and $\{ \xi_{\sigma}^{(p)}\}$ respectively. The collection $\{\phi_{\sigma}^{(p)}, \xi_{\sigma}^{(p)}\}_p$ spans $L^2(N;\Lambda^p T^*N)$. Moreover, for $\sigma\neq 0$ we can arrange these bases  so that for any element $\xi_{\sigma}^{(p)}$ in the basis, we have
\begin{align}  \label{eval2}
\delta_N\xi_{\sigma}^{(p)}=\sqrt{\sigma}\phi_{\sigma}^{(p-1)}.
\end{align}
For $\sigma=0$, the harmonic forms on $N$ are simultaneously closed and co-closed; they both are retained in the construction of the generalized eigenfunctions as they correspond to distinct components of the form decomposition.
\end{lem}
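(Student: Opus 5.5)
The plan is to use the Hodge decomposition on the closed manifold $(N,h_0)$ together with the spectral theorem for $\Delta_N$, and then to match closed and co-closed eigenforms across degrees using $\delta_N$. First, since $N$ is compact, $\Delta_N$ on $\Lambda^pT^*N$ is essentially self-adjoint with compact resolvent. Hence $L^2(N;\Lambda^pT^*N)$ splits as an orthogonal direct sum of finite-dimensional eigenspaces $E_\sigma^{(p)}$. Because $d_N$ and $\delta_N$ commute with $\Delta_N$, each eigenspace with $\sigma\neq0$ splits orthogonally as
\[
E_\sigma^{(p)}=\bigl(E_\sigma^{(p)}\cap\ker d_N\bigr)\oplus\bigl(E_\sigma^{(p)}\cap\ker\delta_N\bigr).
\]
This follows from the Hodge decomposition $L^2=\mathcal H\oplus \overline{\operatorname{im} d_N}\oplus\overline{\operatorname{im}\delta_N}$ restricted to the invariant finite-dimensional space $E_\sigma^{(p)}$. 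On such a space, $\eta=\sigma^{-1}(d_N\delta_N\eta+\delta_Nd_N\eta)$ writes every element explicitly as a closed part plus a co-closed part, and the two parts are orthogonal. For $\sigma=0$, the eigenspace consists of harmonic forms, which are both closed and co-closed. Choosing orthonormal bases of each summand then gives the families $\{\phi_\sigma^{(p)}\}$ and $\{\xi_\sigma^{(p)}\}$, and together they span $L^2(N;\Lambda^pT^*N)$.

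Second, I arrange the bases so that \eqref{eval2} holds. For $\sigma\neq0$, consider the map
\[
T=\sigma^{-1/2}\delta_N:\ E_\sigma^{(p)}\cap\ker d_N\to E_{\sigma}^{(p-1)}\cap\ker\delta_N .
\]
The image is co-closed because $\delta_N^2=0$, and it lies in the $\sigma$-eigenspace because $\delta_N$ commutes with $\Delta_N$. For closed $\xi,\xi'$ in the domain,
\[
\langle\delta_N\xi,\delta_N\xi'\rangle=\langle d_N\delta_N\xi,\xi'\rangle=\langle\Delta_N\xi,\xi'\rangle=\sigma\langle\xi,\xi'\rangle ,
\]
so $T$ is an isometry. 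I then fix an orthonormal basis $\{\xi_\sigma^{(p)}\}$ of the closed part and define $\phi_\sigma^{(p-1)}:=T\xi_\sigma^{(p)}$. These vectors form an orthonormal family of co-closed $(p-1)$-eigenforms, and \eqref{eval2} holds by construction.

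It remains to check that this family is a full basis of the co-closed part of $E^{(p-1)}_\sigma$, and I expect this to be the main point needing care. The image of $T$ consists of exact forms, so it is exactly $E_\sigma^{(p-1)}\cap\operatorname{im}\delta_N$. Since $\sigma\neq0$ there are no harmonic components, so every co-closed element $\phi$ of $E^{(p-1)}_\sigma$ satisfies $\phi=\sigma^{-1}\delta_N d_N\phi$. This shows $\phi$ lies in the image of $T$, with preimage $\sigma^{-1/2}d_N\phi$, and so $T$ is surjective onto the co-closed part. The compatibility with \eqref{eval2} therefore costs nothing: the choice of closed bases in degree $p$ determines the co-closed bases in degree $p-1$ for $\sigma\neq0$. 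I would carry out this choice inductively in $p$, so that no degree's basis is asked to satisfy two incompatible normalizations. The choice is consistent because the co-closed basis in degree $p$ is only constrained through $\xi^{(p+1)}$, which is a separate choice. The harmonic case $\sigma=0$ is handled by a remark: harmonic forms are kept in both families, and they are distinguished by which component of the decomposition \eqref{decomp} they feed into, as stated in the lemma.
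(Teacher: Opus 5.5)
Your proof is correct and takes the same route as the paper: you split each $\sigma$-eigenspace into closed and co-closed parts, then produce the co-closed $(p-1)$-basis by rescaling $\delta_N$ applied to the closed $p$-basis. It is in fact more complete, since your identity $\langle\delta_N\xi,\delta_N\xi'\rangle=\sigma\langle\xi,\xi'\rangle$, the surjectivity check via $\phi=\sigma^{-1}\delta_N d_N\phi$, and the degree-by-degree consistency remark supply exactly what the paper's one-line ``rescale'' step leaves implicit (and the paper's Hodge-star detour is not needed), with one terminological slip: the image of $\delta_N$ consists of co-exact, not exact, forms.
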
 
\begin{proof}
For each eigenvalue $\sigma$ of $\Delta_N$ on $p$-forms over $N$, the eigenspace decomposes into co-closed and exact pieces: there exist either a co-closed $p$-form $\eta_1\in\ker(\delta_N)\subset\Lambda^pT^*N$ with $\Delta_N\eta_1=\sigma\eta_1$, or a closed $p$-form $d\eta_2$ with $\eta_2\in\Lambda^{p-1}T^*N$ and $\Delta_N(d\eta_2)=\sigma\,d\eta_2$. An orthonormal basis for $L^2(N;\Lambda^pT^*N)$ therefore consists of such co-closed eigenforms $\{\phi_\sigma^{(p)}\}$ and closed eigenforms $\{\xi_\sigma^{(p)}\}$.

By Poincar\'e duality a $p$-form $\omega$ on $N$ is closed if and only if $\star^N \omega$ is a co-closed $(n-1-p)$-form. Moreover, the Hodge star operator commutes with the Laplacian, hence it preserves eigenforms. Using the fact that $\star^N  \star^N  \omega = (-1)^{p(n-1-p)} \omega$, we get that  $\phi^{(n-1-p)}_{\sigma}=C_o\star^N\xi_{\sigma}^{(p)}$ if and only if
\begin{align} \label{eval1}
\star^N\phi_{\sigma}^{(n-1-p)}=C_o(-1)^{p(n-1-p)}\xi_{\sigma}^{(p)},
\end{align} 
where $\star^N$ is the Hodge star operator on the compact manifold $N$, and $C_o$ is a constant. 

Finally, using the above, we get that whenever $\xi_{\sigma}^{(p)}$ is a closed $p$-eigenform, then $\delta_N \xi_{\sigma}^{(p)}$ is a co-closed $(p-1)$-eigenform with the same eigenvalue, hence we can rescale our bases for $\sigma\neq 0$ to get \eqref{eval2}. 
\end{proof} 

The proof of the main theorem works by conjugating the operators by the isometry $\Pi_p$, under whose image the computations are explicit. We start with $\delta$, $d$, and the Laplacian on $\widetilde{M}_0$. By the proof of Lemma \ref{lem1}, the operator $d$ can be written as
\[
d\psi = {\Pi}_{p+1}^{-1} \, A_p \, {\Pi}_p\psi
\]
where
\[ 
A_p= \begin{pmatrix} x \, d_N & 0 \\ x^2\partial_x +\left(\frac{n-1}{2}-p\right)\, x & -x \, d_N \end{pmatrix}.
\]
Similarly, the operator $\delta$ can be written as 
\[
\delta\psi = {\Pi}_{p-1}^{-1}  \, B_p \, {\Pi}_p\psi
\]
where
\begin{equation}\label{eq8}
B_p= \begin{pmatrix} x \, \delta_N &  - x^2\partial_x + \left(\frac{n-1}{2}-p +1 \right)\, x \\ 0 & -x \, \delta_N \end{pmatrix}.
\end{equation}
The Laplacian on $p$-forms $\psi$ is then given by
\[
\Delta \psi =  {\Pi}_{p}^{-1} \,\left( B_{p+1} A_p + A_{p-1} B_p \right) \, {\Pi}_p\psi
\]
where
\begin{equation}\label{eqn:Lapl}
\begin{split}
&B_{p+1} A_p + A_{p-1} B_p \\
&= \begin{pmatrix} - x^2\partial_x (x^2\partial_x) +x^2 \alpha (\alpha-1) + x^2 \Delta_N &   2x^2 \, d_N  \\  
2x^2 \, \delta_N &  - x^2\partial_x (x^2\partial_x) +x^2 (\beta+2) (\beta+3) + x^2 \Delta_N  \end{pmatrix}
\end{split}
\end{equation}
for $\alpha, \beta$ as in \eqref{rescal}. Note that we recover $\tilde{\Delta}_1$ and $\tilde{\Delta}_2$ from the diagonals. 

Lemma \ref{basis} and the Hodge decomposition results allow us to describe the sets of co-closed and closed $p$-forms over $\widetilde{M}_0$ using the basis of eigenforms over $N$. Recall that the Hilbert space $L^2(\widetilde{M}_0,\Lambda^pT^*\widetilde{M}_0)$ decomposes orthogonally into three invariant subspaces for $\Delta_p$:
\begin{align}
L^2(\widetilde{M}_0,\Lambda^pT^*\widetilde{M}_0)=\overline{\delta C_0^{\infty}(\widetilde{M}_0;\Lambda^{p+1}T^*\widetilde{M}_0)}\oplus \overline{dC_0^{\infty}(\widetilde{M}_0;\Lambda^{p-1}T^*\widetilde{M}_0)}\oplus \mathcal{H}^p(\widetilde{M}_0),
\end{align}
where $\mathcal{H}^p(\widetilde{M}_0)=\ker(\Delta_p)\cap L^2$ is the space of $L^2$ harmonic $p$-forms, and analogously for the manifolds $\widetilde{M}$ and $M$. 

We remark that since the $*$ operator commutes with the Laplacian, and a $p$-form $\eta$ on $N$ is closed if and only if $*_N \eta$ is co-closed on $N$, we can use this duality to extract the eigenforms for $\Delta_2$ from the eigenforms of $\Delta_1$. By Hodge duality there is an analogue of the decomposition for closed $p$-forms on $\widetilde{M}_0$: in that case the $\xi_\sigma^{(p-1)}$ coefficients are unrestricted and the $\phi_\sigma^{(p)}$ coefficients vanish for $\sigma\neq 0$. This duality shows that it is sufficient to find the Poisson operator and the scattering matrix only for co-closed forms on $\widetilde M$ or $M$. We solve the equations on an exact manifold $\widetilde{M}_0$ and then we can construct $\widetilde{M}$ by the usual gluing construction, gluing $\widetilde{M}=\widetilde{M_0}|_{Z}\cup K$ where $K$ is a compact set with a different metric c.f. \cite{GS} page 7. We will eventually find that to find the scattering matrix it then suffices to compare $\widetilde{M}$ to $\widetilde{M_0}|_Z$.

Let $H^{(i)}_\nu$ be the Hankel functions defined in \eqref{hankel} of the Appendix.  For a fixed $p$  define   $\ell_p(\sigma)= \sqrt{\left(\frac{n}{2}-p-1\right)^2+\sigma}$. 
\begin{def1} \label{def:hankel-solutions} For a $p$-form  $\Phi \in L^2(N;\Lambda^pT^*N)$, let  
\begin{align}
c_{1,\sigma} =\left(\Phi, \phi_{\sigma}^{(p)} \right)_{L^2(N;\Lambda^pT^*N)} \quad c_{2,\sigma} =\left(\Phi,  \xi_{\sigma}^{(p)} \right)_{L^2(N;\Lambda^{p}T^*N)}.
\end{align}
We define the outgoing and incoming Hankel-function solutions by
\begin{align*}
&\left(\frac{\pi\lambda}{2}\right)^{-\frac{1}{2}}\Pi_p\tilde{h}_{\lambda}^{(1)}(\Phi)(x,\theta)=\\& \nonumber
\sum\limits_{\sigma\geq 0}\frac{c_{1,\sigma}}{\sqrt{x}}(-i)^{\ell_{p}}H_{\ell_{p}}^{(1)}\left(\frac{\lambda}{x}\right)\left(\begin{matrix}\phi_{\sigma}^{(p)}\\ 0\end{matrix}\right)-
B_{p+1}\left(\sum\limits_{\sigma\geq 0}\frac{c_{2,\sigma}}{\lambda\sqrt{x}}(-i)^{\ell_{p-1}-1}H_{\ell_{p-1}}^{(1)}\left(\frac{\lambda}{x}\right)\left(\begin{matrix}0 \\ \xi_{\sigma}^{(p)}\end{matrix}\right)\right) 
\end{align*}
and
\begin{align}
&\left(\frac{\pi\lambda}{2}\right)^{-\frac{1}{2}}\Pi_p\tilde{h}_{\lambda}^{(2)}(\Phi)(x,\theta)=\\& \nonumber \sum\limits_{\sigma\geq 0}\frac{c_{1,\sigma}}{\sqrt{x}}(-i)^{\ell_p}H_{\ell_p}^{(2)}\left(\frac{\lambda}{x}\right)\left(\begin{matrix}\phi_{\sigma}^{(p)}\\ 0\end{matrix}\right)
-B_{p+1}\left(\sum\limits_{\sigma\geq 0}\frac{c_{2,\sigma}}{\lambda\sqrt{x}}(-i)^{\ell_{p-1}-1}H_{\ell_{p-1}}^{(2)}\left(\frac{\lambda}{x}\right)\left(\begin{matrix}0 \\ \xi_{\sigma}^{(p)}\end{matrix}\right)\right) 
\end{align}
whenever these converge in $C^{\infty}_{\mathrm{loc}}(\widetilde M_0;\Lambda^{\bullet}T^* \widetilde M_0)$. The subcomponents are labelled as 
\begin{align*}
&\left(\frac{\pi\lambda}{2}\right)^{-\frac{1}{2}}\Pi_p\tilde{h}_{1,\lambda}^{(1)}(\Phi)(x,\theta)=
\sum\limits_{\sigma\geq 0}\frac{c_{1,\sigma}}{\sqrt{x}}(-i)^{\ell_{p}}H_{\ell_{p}}^{(1)}\left(\frac{\lambda}{x}\right)\left(\begin{matrix}\phi_{\sigma}^{(p)}\\ 0\end{matrix}\right)
\end{align*}
and
\begin{align*}
&\left(\frac{\pi\lambda}{2}\right)^{-\frac{1}{2}}\Pi_p\tilde{h}_{2,\lambda}^{(1)}(\Phi)(x,\theta)=
-B_{p+1}\left(\sum\limits_{\sigma\geq 0}\frac{c_{2,\sigma}}{\lambda\sqrt{x}}(-i)^{\ell_{p-1}-1}H_{\ell_{p-1}}^{(1)}\left(\frac{\lambda}{x}\right)\left(\begin{matrix}0 \\ \xi_{\sigma}^{(p)}\end{matrix}\right)\right) 
\end{align*} 
and similarly for the complex conjugate.
\end{def1}
Writing $\Delta^p = B_{p+1}A_p + A_{p-1}B_p$ for the operator in \eqref{eqn:Lapl}, one checks that for $f,g\in C^2(0,\epsilon_1)$ depending only on $x$ and $\lambda$,
\begin{align}\label{itszero}
&\Delta^p \left[B_{p+1} \left(\begin{matrix}0 \\ f \xi_{\sigma}^{(p)}\end{matrix}\right) +  \left(\begin{matrix} g \phi_{\sigma}^{(p)}\\ 0\end{matrix}\right) \right]= B_{p+1} \Delta^{p+1}  \left(\begin{matrix}0 \\ f \xi_{\sigma}^{(p)}\end{matrix}\right) + \left(\begin{matrix} (\tilde \Delta_1 g) \phi_{\sigma}^{(p)} \\ 0\end{matrix}\right) \\ 
&= B_{p+1}  \left(\begin{matrix}0 \\ (\tilde\Delta_2^{p+1} f) \xi_{\sigma}^{(p)}\end{matrix}\right) +\left(\begin{matrix}   (\tilde \Delta_1 g) \phi_{\sigma}^{(p)} \\ 0\end{matrix}\right) \nonumber
\end{align}
which underlies the construction throughout.

\begin{def1} \label{def2}
For $\Phi\in L^2(N;\Lambda^pT^*N)$ we let 
\begin{align}
c_{1,\sigma} =\left( \Phi, \phi_{\sigma}^{(p)} \right)_{L^2(N;\Lambda^pT^*N)} \quad c_{2,\sigma} =\left( \Phi,  \xi_{\sigma}^{(p)} \right)_{L^2(N;\Lambda^{p}T^*N)}.
\end{align}
The individual solutions to the Helmholtz equation on the manifold $\widetilde{M}_0$ are  
\begin{align}
&\left(2\pi\lambda\right)^{-\frac{1}{2}}\Pi_p\tilde{j}_{1,\lambda}(\Phi)(x,\theta)=
\sum\limits_{\sigma\geq 0}(-i)^{\ell_{p}}\frac{c_{1,\sigma}}{\sqrt{x}}J_{\ell_p}\left(\frac{\lambda}{x}\right)\left(\begin{matrix}\phi_{\sigma}^{(p)}\\ 0\end{matrix}\right)
\end{align}
and
\begin{align}
&\left(2\pi\lambda\right)^{-\frac{1}{2}}\Pi_p\tilde{j}_{2,\lambda}(\Phi)(x,\theta)=-
B_{p+1}\sum\limits_{\sigma\geq 0}\left((-i)^{\ell_{p-1}-1}\frac{c_{2,\sigma}}{\lambda\sqrt{x}}J_{\ell_{p-1}}\left(\frac{\lambda}{x}\right)\left(\begin{matrix}0 \\ \xi_{\sigma}^{(p)}\end{matrix}\right)\right) .
\end{align}

The generalized Bessel function on $\widetilde M_0$ is then
\begin{align}
&\left(2\pi\lambda\right)^{-\frac{1}{2}}\Pi_p\tilde{j}_{\lambda}(\Phi)(x,\theta)=\\& \nonumber
\sum\limits_{\sigma\geq 0}(-i)^{\ell_p}\frac{c_{1,\sigma}}{\sqrt{x}}J_{\ell_p}\left(\frac{\lambda}{x}\right)\left(\begin{matrix}\phi_{\sigma}^{(p)}\\ 0\end{matrix}\right)
-B_{p+1}\sum\limits_{\sigma\geq 0}\left((-i)^{\ell_{p-1}-1}\frac{c_{2,\sigma}}{\lambda\sqrt{x}}J_{\ell_{p-1}}\left(\frac{\lambda}{x}\right)\left(\begin{matrix}0 \\ \xi_{\sigma}^{(p)}\end{matrix}\right)\right) .
\end{align}
With this normalization, $\tilde j_{\lambda}=\tilde h^{(1)}_{\lambda}+\tilde h^{(2)}_{\lambda}$, since $H^{(1)}_\nu+H^{(2)}_\nu=2J_\nu$.
\end{def1}

For generic constants $b_1\in\mathbb{R}$ and $b_2\in\mathbb{R}$ we set 
\begin{align}
H^{reg}_{\ell}\left(\frac{\lambda}{x}\right)=\frac{b_1}{\sqrt{x}}H_{\ell}^{(1)}\left(\frac{\lambda}{x}\right)+\frac{b_2}{\sqrt{x}}H_{\ell}^{(2)}\left(\frac{\lambda}{x}\right).
\end{align}
Differentiating via \eqref{derivativebessel} gives
\begin{align}\label{actiondelta}
-x^2\partial_xH^{reg}_{\ell}\left(\frac{\lambda}{x}\right)=\mp x(\ell\mp \frac{1}{2})H_{\ell}^{reg}\left(\frac{\lambda}{x}\right)\pm \lambda H_{\ell\mp 1}^{reg}\left(\frac{\lambda}{x}\right).
\end{align}
The action of $B_{p+1}$ on the Hankel functions is therefore explicitly computable. The Poisson operator $P_{\lambda,\widetilde{M}_0}$ is defined analogously to the scalar case in \cite{christiansen}, Prop.~1.1. To find a generalized eigenfunction $E_{\lambda,\tilde{M}_0}(\Phi)$ satisfying $\delta E_{\lambda,\tilde{M}_0}(\Phi)=0$, one uses forms of type $\omega_{1,p}$ and $\omega_{3,p}=\delta(f_2\,dx\wedge \xi)$ as in Proposition~\ref{propsol} and \eqref{itszero}, normalized to support the functional calculus of the next section. 
\begin{prop}\label{scatteringbase}
We define the Poisson operator $P_{k,\lambda,\widetilde{M}_0}$ for  $k=1,2$ on $\Phi\in{L^2(N;\Lambda^{p}T^*N)}$ by
\begin{align}
P_{k,\lambda,\widetilde{M}_0}\Phi=\tilde{j}_{k,\lambda}(\Phi)(x,\theta) \quad k=1,2
\end{align}
where $\delta \tilde{j}_{k,\lambda}(\Phi)=0$. 
Then the scattering matrix for $\lambda>0$ has two tangential components:
\begin{align}
\mathcal{S}^t_{1,\widetilde{M}_0}(\lambda)= -ie^{-i\pi\sqrt{\left(\frac{n}{2}-p-1\right)^2+\Delta_N}} \qquad \quad \mathcal{S}^t_{2,\widetilde{M}_0}(\lambda)=ie^{-i\pi\sqrt{\left(\frac{n}{2}-p\right)^2+\Delta_N}} 
\end{align}
acting on the forms on $N$ which are co-closed and closed respectively.
\end{prop}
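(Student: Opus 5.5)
The plan is to read off the scattering matrix from the large-argument asymptotics of the Hankel functions, mode by mode on $N$. Write the co-closed generalized eigenform as $\tilde j_\lambda(\Phi)=\tilde h^{(1)}_\lambda(\Phi)+\tilde h^{(2)}_\lambda(\Phi)$, as in Definition~\ref{def2}. Each summand is co-closed: the first-type pieces are co-closed by Proposition~\ref{propsol}, and the second-type pieces are images of $B_{p+1}$, i.e. of $\delta$. They solve the Helmholtz equation by \eqref{itszero} together with the Bessel equations of Proposition~\ref{propsol}. As $x\to 0$ the argument $\lambda/x\to\infty$, so I would insert
\[
H^{(1,2)}_\nu(z)=\sqrt{\tfrac{2}{\pi z}}\,e^{\pm i(z-\frac{\nu\pi}{2}-\frac{\pi}{4})}\bigl(1+O(z^{-1})\bigr)
\]
with $z=\lambda/x$. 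The factor $(\pi\lambda/2)^{1/2}x^{-1/2}\sqrt{2x/(\pi\lambda)}=1$, so the $\Pi_p$-components take exactly the form $e^{\pm i\lambda/x}(\cdots)+O(x)$ required in Definition~\ref{def:phase-scattering}. The $e^{-i\lambda/x}$ coefficient, coming from $H^{(2)}$, gives the incoming data $\Phi$, and the $e^{i\lambda/x}$ coefficient gives $\Psi$.

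For the co-closed component $c_{1,\sigma}\phi^{(p)}_\sigma$, the incoming coefficient is $(-i)^{\ell_p}e^{i\ell_p\pi/2+i\pi/4}c_{1,\sigma}=e^{i\pi/4}c_{1,\sigma}$. The outgoing coefficient is $(-i)^{\ell_p}e^{-i\ell_p\pi/2-i\pi/4}c_{1,\sigma}=e^{-i\pi\ell_p}e^{-i\pi/4}c_{1,\sigma}$. Their ratio is $e^{-i\pi/2}e^{-i\pi\ell_p}=-ie^{-i\pi\ell_p}$. Since $\ell_p=\sqrt{(\frac n2-p-1)^2+\sigma}$, functional calculus in $\Delta_N$ on co-closed forms gives $\mathcal S^t_1$.

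For the closed component I first compute the leading term of $B_{p+1}$ applied to $\frac{c_{2,\sigma}}{\lambda\sqrt x}(-i)^{\ell_{p-1}-1}H^{(j)}_{\ell_{p-1}}(\lambda/x)\,(0,\xi_\sigma^{(p)})^T$. By \eqref{eq8}, the top entry comes from $-x^2\partial_x+O(x)$ acting on the Hankel factor, and the bottom entry is $-x\delta_N(\cdot)=O(x)$, so it is absorbed in the remainder. By \eqref{actiondelta}, the derivative term is $\pm\lambda H^{reg}_{\ell_{p-1}\mp1}$ plus $O(x)$. Equivalently, differentiating the asymptotic phase gives $-x^2\partial_x e^{\pm i\lambda/x}=\pm i\lambda e^{\pm i\lambda/x}$. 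The $\lambda^{-1}$ cancels, leaving a tangential $p$-form proportional to $\xi^{(p)}_\sigma$ with incoming coefficient $(-i)\cdot(-i)^{\ell_{p-1}-1}e^{i\ell_{p-1}\pi/2+i\pi/4}c_{2,\sigma}$. Up to the overall minus sign, which cancels in the ratio, this equals $e^{i\pi/4}c_{2,\sigma}$. The outgoing coefficient is $(+i)(-i)^{\ell_{p-1}-1}e^{-i\ell_{p-1}\pi/2-i\pi/4}c_{2,\sigma}$. The ratio is therefore $(i/(-i))\,e^{-i\pi\ell_{p-1}}e^{-i\pi/2}=-e^{-i\pi\ell_{p-1}}(-i)=ie^{-i\pi\ell_{p-1}}$. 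Since $\ell_{p-1}=\sqrt{(\frac n2-p)^2+\sigma}$, this gives $\mathcal S^t_2$ on closed forms.

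The main obstacle is justifying the summation over $\sigma$. The Hankel asymptotics are not uniform in the order $\nu=\ell(\sigma)$, so the remainder is $O(x)$ only for $\sigma\ll \lambda^2/x^2$. I would first prove the statement for $\Phi$ with finitely many nonzero modes, where everything is explicit. I would then extend by density, since the resulting operators are unitary multipliers on $L^2(N)$, and by orthogonality of the two families from Lemma~\ref{basis}, which also accounts for the harmonic $\sigma=0$ forms. A second point to check is that $\Phi$ and $\Psi$ are well-defined tangential data. This requires that the normal component of $B_{p+1}(\cdot)$ and the subleading terms of \eqref{actiondelta} really are $O(x)$ in the $\Pi_p$ normalization. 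I would verify this directly from \eqref{eq8}.
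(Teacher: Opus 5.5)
Your proposal is correct and follows essentially the paper's route. Both read off the outgoing/incoming coefficient ratios $-ie^{-i\pi\ell_p}$ and $ie^{-i\pi\ell_{p-1}}$ mode by mode from the large-argument Hankel asymptotics; your factor $\pm i$ from $-x^2\partial_x e^{\pm i\lambda/x}$ is exactly the paper's order shift $H_{\ell_{p-1}}\to H_{\ell_{p-1}-1}$ in \eqref{eq11}, which replaces $\ell_{p-1}$ by $\ell_{p-1}-1$ in the Hankel phases defining $C_{\pm,2}$ (the paper also checks the normalization \eqref{diracpoisson} by Green's identity, which the formula does not need, while your finite-mode-plus-density step handles a uniformity-in-$\sigma$ point the paper leaves implicit).
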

\begin{proof}
To verify that this is the correct Poisson operator, we check that $\delta\tilde{j}_{k,\lambda}(\Phi)(x,\theta)=0$ and that
\begin{align}\label{diracpoisson}
\langle \tilde{j}_{k,\lambda}(\Phi),\tilde{j}_{k,\mu}(\Psi)\rangle_{L^2(\widetilde M_0)}=4\pi \lambda\delta(\lambda^2-\mu^2)\langle\Phi, \Psi\rangle_{L^2(N)},
\end{align}
where $\delta$ denotes the Dirac delta in the parameter $\lambda$.
Separation of variables shows that $\tilde{j}_{k,\lambda}(\Phi)(x,\theta)$ solves the Helmholtz equation with eigenvalue $\lambda^2$. Verification of \eqref{diracpoisson} simultaneously yields the two tangential scattering matrix components. We begin by establishing $\delta\tilde{j}_{k,\lambda}(\Phi)(x,\theta)=0$ for $k=1,2$. We have automatically
\begin{align}
\delta\Pi_p^{-1}\left(\sum\limits_{\sigma\geq 0}(-i)^{\ell_p}\frac{c_{1,\sigma}}{\sqrt{x}}J_{\ell_p}\left(\frac{\lambda}{x}\right)\left(\begin{matrix}\phi_{\sigma}^{(p)}\\ 0\end{matrix}\right)\right)=0. 
\end{align}
Terms involving basis vectors of the second type satisfy $\delta\psi=0$ only if they lie in the image of $\delta$: a form $\tilde{\psi}_2=\delta\psi_2$ automatically satisfies $\delta^2\tilde{\psi}_2=0$. It therefore suffices to find second-type vectors of this form that solve the Helmholtz equation. Since we have that 
 \[
\delta(\delta \psi_2) =  ({\Pi}_{p-1}^{-1}  \, B_{p} \, {\Pi}_{p}){\Pi}_{p}^{-1}  \, B_{p+1} \, {\Pi}_{p+1}\psi_2= ({\Pi}_{p-1}^{-1}  \, B_{p}B_{p+1}\, {\Pi}_{p+1})\psi_2=0.
\]
It follows that if we express $\tilde{\psi_2}$ in basis vectors then 
\begin{align}
\Pi_{p}\tilde{\psi_2}=B_{p+1}\left(\sum\limits_{\sigma\geq 0}\frac{c_{2,\sigma}}{\lambda\sqrt{x}}(-i)^{\ell_{p-1}-1}J_{\ell_{p-1}}\left(\frac{\lambda}{x}\right)\left(\begin{matrix}0 \\ \xi_{\sigma}^{(p)}\end{matrix}\right)\right)
\end{align}
for the coefficients $c_{2,\sigma}$ defined above. This gives the result that $\delta\tilde{j}_{2,\lambda}(\Phi)=0$. 

Now we examine the generalized eigenfunctions piece by piece. The Hankel function asymptotics will be used along with the identity
\begin{align}\label{sum}
2J_{\ell_p}\left(\frac{\lambda}{x}\right)=H^{(1)}_{\ell_p}\left(\frac{\lambda}{x}\right)+H^{(2)}_{\ell_p}\left(\frac{\lambda}{x}\right).
\end{align}
Using the asymptotic expansion from \eqref{eqn:Hankelerror}
\begin{align}\label{eq12}
{H^{(1)}_{\ell_{p}}}\left(\frac{\lambda}{x}\right)=\left(\frac{2x}{\pi\lambda}\right)^{\frac{1}{2}}e^{%
i\omega}(1+O(x)) \quad x\rightarrow 0
\end{align}
where $\omega=\frac{\lambda}{x}-\tfrac{1}{2}\ell_p\pi-\tfrac{1}{4}\pi$, we also have that
\begin{align}\label{ex}
& \left(\frac{\pi\lambda}{2}\right)^{\frac{1}{2}}e^{\frac{i\pi}{4}} \frac{1}{\sqrt{x}} i^{\ell_p}H_{\ell_p}^{(1)} \left(\frac{\lambda}{x}\right)\left(\begin{matrix}\phi_{\sigma}^{(p)}\\ 0\end{matrix}\right)\\ &=e^{ \frac{i\lambda}{x}}\left(\begin{matrix} \phi_{\sigma}^{(p)}\\ 0\end{matrix}\right)(1+O(x)),  \quad \mathrm{as}\quad x\rightarrow 0. \notag
\end{align}
Similarly for the complex conjugate of $H^{(1)}$, $H^{(2)}$, we have that
\begin{align}
& \left(\frac{\pi\lambda}{2}\right)^{\frac{1}{2}}e^{-\frac{i\pi}{4}} \frac{1}{\sqrt{x}} (-i)^{\ell_p}H_{\ell_p}^{(2)} \left(\frac{\lambda}{x}\right)\left(\begin{matrix}\phi_{\sigma}^{(p)}\\ 0\end{matrix}\right)\\ &=e^{-\frac{i\lambda}{x}}\left(\begin{matrix} \phi_{\sigma}^{(p)}\\ 0\end{matrix}\right)(1+O(x)),  \quad \mathrm{as}\quad x\rightarrow 0. \notag
\end{align}
This allows us to compute the action of $\mathcal{S}^t_{\widetilde{M}_{0}}(\lambda)$ in the first component. Using \eqref{ex} the equation above and the normalization constants in the definitions of $\tilde{h}_{\lambda}^{(1)}(\Phi), \tilde{h}_{\lambda}^{(2)}(\Phi)$, we define the outgoing and incoming coefficients as follows:
\begin{align}
C_{+,1}=(-i)^{\ell_p}e^{-\tfrac{i}{2}\ell_p\pi-\tfrac{i}{4}\pi}\qquad C_{-,1}=(-i)^{\ell_p}e^{\tfrac{i}{2}\ell_p\pi+\tfrac{i}{4}\pi}.
\end{align}
As a result we have that
\begin{align}
\frac{\mathrm{outgoing}}{\mathrm{incoming}}=\frac{C_{+,1}}{C_{-,1}}=\frac{(-i)^{\ell_p}e^{-\tfrac{i}{2}\ell_p\pi-\tfrac{i}{4}\pi}}{(-i)^{\ell_p}e^{\tfrac{i}{2}\ell_p\pi+\tfrac{i}{4}\pi}}=e^{-i\ell_p\pi-\tfrac{i}{2}\pi}=\mathcal{S}^t_{1,\tilde{M}_0}(\lambda).
\end{align}

Moreover, using the definition of $B_{p+1}$ and~\eqref{derivativebessel},
\begin{equation}\label{eq11}
\begin{split}
& B_{p+1}\left( \frac{1}{ \lambda \sqrt{x}} H_{\ell_{ p-1}}^{(2)}\left(\frac{\lambda}{x}\right)\left(\begin{matrix}0 \\ \xi_{\sigma}^{(p)}\end{matrix}\right)\right)\\
&= \left[ -x^2 \partial_x +\left( \tfrac{n-1}{2}-p\right) x \right] \left( \frac{1}{\lambda \sqrt{x}} H_{\ell_{ p-1}}^{(2)} \right)\left(\begin{matrix} \xi_{\sigma}^{(p)} \\0 \end{matrix}\right) -\frac{ \sqrt{\sigma} \,\sqrt{x}}{ \lambda } H_{\ell_{ p-1}}^{(2)}\left(\begin{matrix}0\\ \phi_{\sigma}^{(p-1)} \end{matrix}\right) \\&
=\left( \left[ \tfrac n2 -\ell_{ p-1} -p \right] \frac{\sqrt{x}}{\lambda} H_{\ell_{ p-1}}^{(2)} + \frac{1}{\sqrt{x}} H_{\ell_{ p-1}-1}^{(2)} \right) \left(\begin{matrix} \xi_{\sigma}^{(p)} \\0 \end{matrix}\right) -\frac{ \sqrt{\sigma} \,\sqrt{x}}{ \lambda } H_{\ell_{ p-1}}^{(2)}\left(\begin{matrix}0\\ \phi_{\sigma}^{(p-1)} \end{matrix}\right) \\
&=\left( \left[ \tfrac n2 -p \right] \frac{\sqrt{x}}{\lambda} H_{\ell_{ p-1}}^{(2)} + \frac{1}{\sqrt{x}} \left(H_{\ell_{ p-1}}^{(2)} \right)'\right) \left(\begin{matrix} \xi_{\sigma}^{(p)} \\0 \end{matrix}\right) -\frac{ \sqrt{\sigma} \,\sqrt{x}}{ \lambda } H_{\ell_{ p-1}}^{(2)}\left(\begin{matrix}0\\ \phi_{\sigma}^{(p-1)} \end{matrix}\right).
\end{split}
\end{equation} 
As a result, 
\begin{align}\label{ex2}
&\left(\frac{\pi\lambda}{2}\right)^{\frac{1}{2}}e^{-\frac{i\pi}{4}}B_{p+1}\left( \frac{1}{\lambda\sqrt{x}}(-i)^{\ell_{p-1}(\sigma)-1} H_{\ell_{p-1}(\sigma)
}^{(2)}\left(\frac{\lambda}{x}\right)\left(\begin{matrix}0 \\ \xi_{\sigma}^{(p)}\end{matrix}\right)\right)\\&=e^{-\frac{i\lambda}{x}}\left(\begin{matrix} \xi_{\sigma}^{(p)}\\ 0\end{matrix}\right)(1+O(x)), \quad \mathrm{as} \quad x\rightarrow 0\notag
\end{align} 
and similarly for the complex conjugate $H^{(1)}$. Then the remaining criterion is checked by using \eqref{ex2} as well as the definition of the coefficients $c_{1,\sigma}$ and $c_{2,\sigma}$.  In order to check the equality \eqref{diracpoisson} we observe that using \eqref{eqn:Hankelerror}
this last inner product can be computed by taking the limit where $\chi_x(x)$ is a cutoff function with support away from $x=0$:
$$
 \lim_{x \to 0} \frac{1}{\lambda^{2} - \mu^2}\left( \langle  \Delta_p \tilde{j}_{k,\lambda}(\Phi), \chi_x  \tilde{j}_{k,\mu}(\Psi)\rangle_{L^2(\widetilde M_0)} - \langle \tilde{j}_{k,\lambda}(\Phi), \chi_x \Delta_p \tilde{j}_{k,\mu}(\Psi)\rangle_{L^2(\widetilde M_0)} \right), 
$$
and using Green's identity to arrive at 
\begin{equation} \label{eq13}
\begin{split}
 \lim_{x \to 0}  \frac{1}{\lambda^{2}-\mu^{2}}& \left(\; \int\limits_{\partial \widetilde M_0}\langle x^{-2}dx \wedge \tilde{j}_{k,\lambda}(\Phi),d\tilde{j}_{k,\mu}(\Psi)\rangle_{\widetilde M_0}\,x^{1-n}\,dS \right.\\
 &\qquad \left. -\int\limits_{\partial \widetilde M_0}\langle d\tilde{j}_{k,\lambda}(\Phi),x^{ -2 }dx \wedge \tilde{j}_{k,\mu}(\Psi)\rangle_{\widetilde M_0}\,x^{1-n}\,dS\right).
\end{split}
\end{equation}
In the above inner products, $dS$ is the volume element of $N$, unweighted. Moreover only the derivative with respect to $\partial_x$ will survive after taking the inner product, and since $|dx|_{\widetilde{M}_{0}}=x^2$ the expression simplifies to
\begin{equation}\begin{split} \label{eq10}
 \lim_{x \to 0}  \frac{1}{\lambda^{2}-\mu^{2}}& \left( \;\int\limits_{\partial \widetilde M_0}\langle  \tilde{j}_{k,\lambda}(\Phi), x^{2} \partial_x \left(\tilde{j}_{k,\mu}(\Psi)\right)\rangle_{N}\,x^{1-n+2p} \,  dS \right.
 \\
 &\qquad \left. -\int\limits_{\partial \widetilde M_0}\langle x^2 \partial_x \left(\tilde{j}_{k,\lambda}(\Phi)\right),  \tilde{j}_{k,\mu}(\Psi)\rangle_{N}\,x^{1-n+2p} \,dS\right).
\end{split}
\end{equation}
For the case $k=1$, the first integral in \eqref{eq10} will contribute with the term 
\begin{align*} 
 (4\pi^2\lambda\mu)^{1/2} &\sum\limits_{\sigma\geq 0} \sum\limits_{\tilde{\sigma}\geq 0}  \left[ (- i)^{\ell_{p}(\sigma)} i^{\ell_{p}(\tilde{\sigma})} \frac{c_{1,\sigma}(\Phi)}{\sqrt{x}}J_{\ell_p(\sigma)}\left(\frac{\lambda}{x}\right) \; x^2 \partial_x \left(\overline{\frac{c_{1,\tilde{\sigma}}(\Psi)}{\sqrt{x}}J_{\ell_p(\tilde{\sigma})}\left(\frac{\mu}{x}\right)} \right) \right.\\
&   \cdot \int\limits_{\partial \widetilde M_0} \left\langle  \phi_{\sigma}^{(p)} ,    \phi_{\tilde{\sigma}}^{(p)}  \right\rangle_N \Bigr]\; dS \\
= (4\pi^2\lambda\mu)^{1/2}  &   \sum\limits_{\sigma\geq 0}   \left(c_{1,\sigma}(\Phi) \, \overline{c_{1, {\sigma}}(\Psi)}  \;  \frac{1}{\sqrt x}J_{\ell_p(\sigma)}\left(\frac{\lambda}{x}\right)    \,  x^2 \partial_x \left( \frac{1}{\sqrt x} \overline{J_{\ell_p( {\sigma})}\left(\frac{\mu}{x}\right)} \right) \;   \right)
\end{align*}
(there is an additional term with a constant in front that will disappear after we take the difference with the second integral and which we omit) where we have used the fact that the $\phi_\sigma$ are an orthonormal basis of $L^2(N)=L^2(\partial \widetilde M_0)$ to reduce the double sum to a single sum over $\sigma$. The second integral will also contribute with a similar term.
 
Observe that
\begin{align} \label{thirdbi}
&\frac{1}{\sqrt x}H_{\ell_p(\sigma)}^{(1)}\left(\frac{\lambda}{x}\right)    \,  x^2 \partial_x \left( \frac{1}{\sqrt x} \overline{H_{\ell_p( {\sigma})}^{(1)}\left(\frac{\mu}{x}\right)} \right)  -  x^2 \partial_x \left(  \frac{1}{\sqrt x}H_{\ell_p(\sigma)}^{(1)}\left(\frac{\lambda}{x}\right)   \right)  \,  \frac{1}{\sqrt x} \overline{H_{\ell_p( {\sigma})}^{(1)}\left(\frac{\mu}{x}\right)} \\ \nonumber
=&- \frac{\mu}{x} \,  H_{\ell_p(\sigma)}^{(1)}\left(\frac{\lambda}{x}\right)    \,   \overline{\left(  H_{\ell_p( {\sigma})}^{(1)}\right)' \left(\frac{\mu}{x}\right)} +   \frac{\lambda}{x}  \left( H_{\ell_p(\sigma)}^{(1)}\right)'\left(\frac{\lambda}{x}\right)    \,   \overline{H_{\ell_p( {\sigma})}^{(1)}\left(\frac{\mu}{x}\right)}\\ \nonumber
=& \frac{2i(\mu+\lambda)}{\pi \sqrt{\lambda\mu}}\, e^{i\frac{(\lambda-\mu)}{x}}(1+O(x))
\end{align}
where we have used that 
\begin{equation}
\begin{split}\label{difasymp}
&\left(H^{(1)}_{\ell_p}\right)'\left(\frac{\lambda}{x}\right)=i\, \left( \frac{2x}{\pi \lambda}\right)^{1/2} e^{i\omega}(1+O(x))\\&
\left(H^{(2)}_{\ell_p}\right)'\left(\frac{\lambda}{x}\right)=-i\, \left( \frac{2x}{\pi \lambda}\right)^{1/2} e^{-i\omega}(1+O(x))
\end{split}
\end{equation}
with $\omega=\frac{\lambda}{x}-\tfrac{1}{2}\ell_p\pi-\tfrac{1}{4}\pi$, and the corresponding asymptotics for $H^{(1)}, H^{(2)}$ from~\eqref{eq12}.

We now turn to $k=2$. Using formula \eqref{eq11} write
\[
B_{p+1}\left( \frac{1}{ \lambda \sqrt{x}} H_{\ell_{ p-1}}^{(1)}\left(\frac{\lambda}{x}\right)\left(\begin{matrix}0 \\ \xi_{\sigma}^{(p)}\end{matrix}\right)\right) = f_{\lambda,\sigma} \left(\begin{matrix}  \xi_{\sigma}^{(p)} \\ 0 \end{matrix}\right) + g_{\lambda,\sigma} \left(\begin{matrix}0\\ \phi_{\sigma}^{(p-1)} \end{matrix}\right)
\]
and observe that
\[
x^{-2} dx \wedge \Pi_{p}^{-1} B_{p+1}\left( \frac{1}{ \lambda \sqrt{x}} H_{\ell_{ p-1}}^{(1)}\left(\frac{\lambda}{x}\right)\left(\begin{matrix}0 \\ \xi_{\sigma}^{(p)}\end{matrix}\right)\right)=x^{-2} dx \wedge \Pi_p^{-1} f_{\lambda,\sigma} \left(\begin{matrix} \xi_{\sigma}^{(p)} \\0 \end{matrix}\right)
\]
and
\begin{equation*} 
\begin{split}
&  \Pi_{p+1} d \left( \Pi_{p}^{-1} B_{p+1}\left( \frac{1}{ \lambda \sqrt{x}} H_{\ell_{ p-1}}^{(1)}\left(\frac{\lambda}{x}\right)\left(\begin{matrix}0 \\ \xi_{\sigma}^{(p)}\end{matrix}\right)\right) \right) =A_p \left[   f_{\lambda,\sigma} \left(\begin{matrix}  \xi_{\sigma}^{(p)} \\ 0 \end{matrix}\right) + g_{\lambda,\sigma} \left(\begin{matrix}0\\ \phi_{\sigma}^{(p-1)} \end{matrix}\right) \right]\\
&= \left(\begin{matrix}0\\ (x^2\partial_x +\left(\frac{n-1}{2}-p\right)\, x) f_{\lambda,\sigma} \, \xi_{\sigma}^{(p)}  -x \, g_{\lambda,\sigma} \, d_N \phi_{\sigma}^{(p-1)} \end{matrix} \right)
\end{split} 
\end{equation*}
since the $\xi_{\sigma}^{(p)} $ are closed. 

Hence, for $k=2$ the first boundary integral in \eqref{eq13} becomes  
\begin{equation}\label{eq14}
\begin{split}
&\int\limits_{\partial \widetilde M_0}\langle  x^{-2}dx \wedge \tilde{h}^{(1)}_{2,\lambda}(\Phi),d\tilde{h}^{(1)}_{2,\mu}(\Psi)\rangle_{\widetilde M_0}\, x^{1-n}\,dS\\
&=\left(\tfrac{\pi\lambda}{2}\right)^{1/2}\!\left(\tfrac{\pi\mu}{2}\right)^{1/2}\sum\limits_{\sigma\geq 0} \sum\limits_{\tilde{\sigma}\geq 0} \left( i^{\ell_{p-1}(\sigma)-1} (-i)^{\ell_{p-1}(\tilde{\sigma})-1}  c_{2,\sigma}(\Phi) \,\overline{c_{2,\tilde{\sigma}}(\Psi)}   f_{\lambda,\sigma} \;  \right.\\
& \cdot \int\limits_{\partial \widetilde M_0} \left\langle \xi_{\sigma}^{(p)}   ,    (x^2\partial_x +\left(\tfrac{n-1}{2}-p\right)\, x) f_{\mu,\tilde\sigma} \, \xi_{\tilde\sigma}^{(p)}  -x \, g_{\mu,\tilde\sigma} \, d_N \phi_{\tilde\sigma}^{(p-1)} \right\rangle_N \Bigg)  \; dS \\
&= \left(\tfrac{\pi\lambda}{2}\right)^{1/2}\!\left(\tfrac{\pi\mu}{2}\right)^{1/2}   \sum\limits_{\sigma\geq 0}    c_{2,\sigma}(\Phi) \, \overline{c_{2, {\sigma}}(\Psi)}  \;    f_{\lambda,\sigma} \, \left[(x^2\partial_x +\left(\tfrac{n-1}{2}-p\right)\, x)\overline{ f_{\mu, \sigma}}   -x \, \overline{g_{\mu, \sigma}}  \right] 
\end{split}
\end{equation}
since $\left\langle \xi_{\sigma}^{(p)}   ,      d_N \phi_{\tilde\sigma}^{(p-1)} \right\rangle_{L^2(N)} \neq 0$ if and only if  $d_N \phi_{\tilde\sigma}^{(p-1)}$, which is a closed $p$-form with eigenvalue $\tilde{\sigma}$, coincides with  $\xi_{\sigma}^{(p)}.$

Taking into account the large-argument asymptotic expansion of the Hankel functions as $x\to 0$, we find
\begin{align}\label{leadingrot}
(x^2\partial_x +\left(\tfrac{n-1}{2}-p\right)\, x)\overline{ f_{\mu, \sigma}}   -x \, \overline{g_{\mu, \sigma}} \sim - \frac{\mu}{\sqrt{x}} 
\overline{ (H_{\ell_{ p-1}-1}^{(1)})' \left(\frac{\mu}{x}\right)} 
\end{align}
hence 
\begin{align}\label{secondbi}
\begin{split}
&\int\limits_{\partial \widetilde M_0}\langle  x^{-2}dx \wedge \tilde{h}_{2,\lambda}^{(1)}(\Phi),d\tilde{h}^{(1)}_{2,\mu}(\Psi)\rangle_{\widetilde M_0}\, x^{1-n}\,dS\\
&= \left(\tfrac{\pi\lambda}{2}\right)^{1/2}\!\left(\tfrac{\pi\mu}{2}\right)^{1/2}   \sum\limits_{\sigma\geq 0}    c_{2,\sigma}(\Phi) \,\overline{ c_{2, {\sigma}}(\Psi) } \;    f_{\lambda,\sigma} \, \left[(x^2\partial_x +\left(\tfrac{n-1}{2}-p\right)\, x)\overline{ f_{\mu, \sigma}}   -x \, \overline{g_{\mu, \sigma}}  \right] \\
&= i \mu   \;   e^{i\frac{(\lambda-\mu)}{x}} \, (\langle\Phi, \Psi\rangle_{L^2(N;\Lambda^{p}T^*N)}+O(x)) .
\end{split}
\end{align}
Similarly the second integral becomes
\begin{align}\label{firstbi}
\begin{split}
&\int\limits_{\partial \widetilde M_0}\langle d\tilde{h}^{(1)}_{ 2,\lambda}(\Phi),  x^{-2}dx \wedge \tilde{h}^{(1)}_{2,\mu}(\Psi)\rangle_{ \widetilde M_0}\, x^{1-n}\, dS \\
&=\left(\tfrac{\pi\lambda}{2}\right)^{1/2}\!\left(\tfrac{\pi\mu}{2}\right)^{1/2}   \sum\limits_{\sigma\geq 0}    c_{2,\sigma}(\Phi) \, \overline{c_{2, {\sigma}}(\Psi)}\;    \left[(x^2\partial_x +\left(\tfrac{n-1}{2}-p\right)\, x) f_{\lambda, \sigma}   -x \, g_{\lambda, \sigma}  \right]  \, \overline{f_{\mu,\sigma}} \\
&=- i \lambda \;  e^{i\frac{(\lambda-\mu)}{x}}\, (\langle\Phi, \Psi\rangle_{L^2(N;\Lambda^{p}T^*N)}+O(x)) . 
\end{split}
\end{align}
Combining \eqref{eq13} with the decomposition \eqref{sum}, the oscillatory cross-terms cancel by the same symmetry argument as for $\tilde{j}_{1,\lambda}$, and the remaining diagonal term gives the desired result. We define the outgoing and incoming coefficients for $k=2$ as:
\begin{align}
C_{+,2}=(-i)^{\ell_{p-1}-1}e^{-\tfrac{i}{2}(\ell_{p-1}-1)\pi-\tfrac{i}{4}\pi}\qquad C_{-,2}=(-i)^{\ell_{p-1}-1}e^{\tfrac{i}{2}(\ell_{p-1}-1)\pi+\tfrac{i}{4}\pi}.
\end{align}
We compute the scattering matrix coefficient from the leading-order terms in \eqref{leadingrot} and \eqref{difasymp},
\begin{align}
\frac{\mathrm{outgoing}}{\mathrm{incoming}}=\frac{C_{+,2}}{C_{-,2}}=\frac{(-i)^{\ell_{p-1}-1}e^{-\tfrac{i}{2}(\ell_{p-1}-1)\pi-\tfrac{i}{4}\pi}}{(-i)^{\ell_{p-1}-1}e^{\tfrac{i}{2}(\ell_{p-1}-1)\pi+\tfrac{i}{4}\pi}}=e^{-i(\ell_{p-1}-1)\pi-\tfrac{i}{2}\pi}=\mathcal{S}^t_{2,\tilde{M}_0}(\lambda).
\end{align}

\end{proof}
\begin{rem}
In the case when $n=3, p=1$ with the Euclidean metric on the exact manifold $\widetilde{M}_0$ the generalized eigenfunctions coincide with those in \cite{YFW} for Maxwell's equations.  For further background on the quantization of electromagnetic fields on non-compact manifolds we refer to \cite{stratton,AS}.
\end{rem}

We now extend the construction to $M$ and $\widetilde{M}$ Recall that the metric on $\widetilde{M}$ is a compact perturbation of the metric $g_0$ on $\widetilde{M}_0$, and the metric on $M$ satisfies either \eqref{closeness} or \eqref{closeness2}. Let $\chi\in C^{\infty}(M)$ be a cutoff function supported in $Z=(0,\epsilon_1)\times N$  with $\chi=1$ on $\{0<x<\epsilon_1/2\}$; in particular $\chi=1$ near $x=0$, so that $\chi\tilde{j}_{\lambda}(\Phi)$ defines the model eigenfunctions near the boundary at infinity. We recall the following definition.
\begin{def1}
A generalized eigenfunction $E_{\lambda,M}(\Phi)$ on $M$ with $\delta E_{\lambda,M}(\Phi)=0$ is given by: 
\begin{align}
E_{\lambda,M}(\Phi)=\chi\tilde{j}_{\lambda}(\Phi)-R_{\lambda}(\Delta_M-\lambda^2)(\chi \tilde{j}_{\lambda}(\Phi))
\end{align}
for $R_\lambda$ as defined in the introduction.
\end{def1} 
Briefly we remark that, on the model manifold $\widetilde{M}_0$ we have $E_{\lambda,\widetilde{M}_0}(\Phi)=\tilde{j}_{\lambda}(\Phi)$. Co-closedness of $E_{\lambda,M}(\Phi)$ follows immediately. Note that $\delta(\chi\tilde{j}_{\lambda}(\Phi))=[\delta,\chi]\tilde{j}_{\lambda}(\Phi)$ is compactly supported, since $\tilde{j}_{\lambda}(\Phi)$ is co-closed and $d\chi$ has compact support. Since $R_{\lambda}(\Delta_M-\lambda^2)=\Id$ on $L^2_{comp}$ we have $\delta(\chi\tilde{j}_{\lambda}(\Phi))-R_{\lambda}^{p-1}(\Delta_{p-1}-\lambda^2)(\delta(\chi\tilde{j}_{\lambda}(\Phi)))=0$.  This gives $E_{\lambda,M}(\Phi)$ is co-closed. The same argument shows that $E_{\lambda,M}(\Phi)$ is an eigenfunction on $M$.  Moreover, the same construction holds with $\widetilde{M}$ replacing $M$. 
We also have the following equality in $Z$.
\begin{prop}\label{genrepA}
For $\lambda>0$, there exists a unique $A^{\mathrm{abs}}_{\lambda}(\Phi)\in C^{\infty}(N,\Lambda^pT^*N)$ such that the generalized eigenfunction on $Z$ with $\delta E_{\lambda,M}(\Phi)(x,\theta)=0$ is given by 
\begin{align}  \label{genrepAeq}
E_{\lambda,M}(\Phi)(x,\theta)|_Z=\tilde{j}_{\lambda}(\Phi)(x,\theta)+\tilde{h}^{(1)}_{\lambda}(A^{\mathrm{abs}}_{\lambda}\Phi)(x,\theta)+O(x^{\frac{n+1}{2}})
\end{align}
where $\tilde{h}^{(1)}_{\lambda}(\Phi)(x,\theta)$ is as in Definition~\ref{def:hankel-solutions}.
\end{prop}
\begin{proof}
For simplicity we denote the Laplacian on $M$ by $\Delta$. Following \cite{christiansen}, Corollary~1.1, for $\Phi\in C^{\infty}(N;\Lambda^{\bullet} T^*N)$
\begin{align}
(\Delta-\lambda^2)\chi E_{\lambda,\widetilde{M}_0}(\Phi)=\chi(\Delta-\Delta_{\widetilde{M}_0})E_{\lambda,\widetilde{M}_0}(\Phi)+[\Delta,\chi] E_{\lambda,\widetilde{M}_0}(\Phi)\in C^{\infty}(M;\Lambda^{\bullet} T^*M).
\end{align} 
Using the assumptions on the metric \eqref{closeness}, the result is that $(\Delta-\lambda^2)\chi E_{\lambda,\widetilde{M}_0}(\Phi)$ is smooth and vanishes to infinite order as $x\rightarrow 0$. The assumptions on the metrics are the exact assumptions needed to apply Proposition~14 of \cite{melbook2}. The positive-eigenform obstruction in that proposition is absent by Proposition~\ref{prop:no-positive-eigenforms}; hence there is a $\Theta \in C^{\infty}(M;\Lambda^{\bullet} T^*M)$ such that
\begin{align}
(\Delta-\lambda^2)e^{i\frac{\lambda}{x}}x^{\frac{n-1}{2}}\Theta=(\Delta-\lambda^2)\chi E_{\lambda,\widetilde{M}_0}(\Phi).
\end{align}
The uniqueness required here follows from Proposition~\ref{prop:no-positive-eigenforms}.
The $p$-form $\Theta$ has leading order term coinciding with an outgoing wave. Using the Appendix, we have that the Hankel functions $H^{(1)}_{\ell}(\frac{\lambda}{x})$ can be replaced by their asymptotic expansion \eqref{eqn:Hankelerror} as $x \to 0$. This allows us to regroup and relabel the $\Theta$. Indeed we have that expanding out the Hankel function in the generalized eigenfunction definition
\begin{align}
e^{i\frac{\lambda}{x}}x^{\frac{n-1}{2}}\Theta=e^{i\frac{\lambda}{x}}(x^{\frac{n-1}{2}}+O(x^{\frac{n+1}{2}}))\Phi'
\end{align}
where $\Phi'=C_+A^{\mathrm{abs}}_{\lambda}\Phi$ and $C_+$ is the generalized Hankel function normalization constant.  The desired result follows by defining $\Theta(x)|_{x=0}=C_+A^{\mathrm{abs}}_{\lambda}\Phi$, which by the same Prop.~14 is unique. We can conclude that on the support of $\chi$ we have that 
\begin{align}
\tilde{h}_{\lambda}^{(1)}(A^{\mathrm{abs}}_{\lambda}(\Phi))=-R_{\lambda}([\Delta,\chi]+\chi(\Delta-\Delta_{\widetilde{M}_0})) \tilde{j}_{\lambda}(\Phi)+O(x^{\frac{n+1}{2}}).
\end{align} 
  The term $([\Delta,\chi]+\chi(\Delta-\Delta_{\widetilde{M}_0}))$ is $O(x^{\infty})$ under Assumption \ref{closeness} and $O(x^{n_0-1}\nabla+x^{n_0})$ under Assumption \ref{closeness2}. 
\end{proof}  
We remark here that under Assumption \eqref{closeness2}, the proof still holds but $\Theta$ is no longer smoothing, the restriction to $x=0$ is a map $H^{-s}(N;\Lambda^pT^*N)\rightarrow L^2(N;\Lambda^pT^*N)$ where $s<n_0-1$. This is implicit in the proof of Prop~14 and Remark 3, but is spelled out formally in \cite{JSB}, Prop 2.2 and Prop 2.3. 
The same proof also holds for $\widetilde{M}$. 
As defined, the generalized eigenfunctions $E_{\lambda,M}(\Phi)$ are distributions in $\lambda$ valued in $\mathcal{S}(M;\Lambda^p T^*M)$. They share the properties established in \cite{OS} and extend the definitions made there. 
This gives two different but equivalent normalizations.  The Hankel-normalized, or absolute, scattering matrix is
\[
S^{M,\mathrm{abs}}(\lambda)=\Id+A^{M,\mathrm{abs}}_{\lambda}.
\]
The phase-normalized tangential scattering matrix from Definition~\ref{def:phase-scattering} is obtained by conjugating with the incoming and outgoing Hankel phase factors:
\begin{align}\label{phase-abs-relation}
\mathcal{S}^t_{M}(\lambda)=C_+S^{M,\mathrm{abs}}(\lambda)C_{-}^{-1}
=C_+(\Id+A^{M,\mathrm{abs}}_{\lambda})C_{-}^{-1}.
\end{align}
For the exact model end one has
\begin{align}
\mathcal{S}^t_{\widetilde M_0}(\lambda)=C_+C_{-}^{-1}.
\end{align}
Here $C_-$ and $C_+$ are the diagonal unitary maps which convert the Hankel incoming and outgoing coefficients into the phase coefficients used in Definition~\ref{def:phase-scattering}; in particular, all constant Hankel phases are contained in $C_\pm$.  Thus the smoothing remainder in Theorem~\ref{lengths} is not $A^{\mathrm{abs}}_{\lambda}$ itself, but rather
\begin{align}
\alpha_M(\lambda)
:=\mathcal{S}^t_M(\lambda)-\mathcal{S}^t_{\widetilde M_0}(\lambda)
=C_+A^{\mathrm{abs}}_{M,\lambda}C_-^{-1}.
\end{align}
We also have that the difference obeys
\begin{align}\label{alphatilde}
\tilde{\alpha}(\lambda)=\mathcal{S}^t_{M}(\lambda)-\mathcal{S}^t_{\widetilde M}(\lambda)=C_+\left(A^{\mathrm{abs}}_{M,\lambda}-A^{\mathrm{abs}}_{\tilde{M},\lambda}\right)C_{-}^{-1}.
\end{align}

\begin{prop}\label{uniS} 
Let $\lambda>0$ and let $A^{\mathrm{abs}}_\lambda$ be the Hankel-normalized scattering amplitude from Proposition \ref{genrepA}.  Define the absolute scattering matrix on $M$ as
\[
        S^{\mathrm{abs}}(\lambda):=\Id+A^{\mathrm{abs}}_\lambda
        \quad \text{on } C^\infty(N;\Lambda^pT^*N).
\]
Then $S^{\mathrm{abs}}(\lambda)$ extends uniquely to a unitary operator on $L^2(N;\Lambda^pT^*N)$. Equivalently,
\[
        \langle S^{\mathrm{abs}}(\lambda)\Phi,S^{\mathrm{abs}}(\lambda)\Psi\rangle_{L^2(N)}
        =\langle \Phi,\Psi\rangle_{L^2(N)}
        \qquad \text{for all } \Phi,\Psi\in C^\infty(N;\Lambda^pT^*N).
\]
Moreover the analytic continuation satisfies
\[
        S^{\mathrm{abs}}(\lambda)^*=S^{\mathrm{abs}}(\overline{\lambda})^{-1}
        \qquad (\operatorname{Im}\lambda\geq 0),
\]
and hence $S^{\mathrm{abs}}(\lambda)^*=S^{\mathrm{abs}}(\lambda)^{-1}$ for real positive $\lambda$.
\end{prop}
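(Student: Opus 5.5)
The plan is to prove unitarity of $S^{\mathrm{abs}}(\lambda)$ through a boundary pairing (Green's formula) argument, the same one used in the proof of Proposition~\ref{scatteringbase} to verify \eqref{diracpoisson}. Then I would continue the identity analytically in $\lambda$ to get the adjoint relation.

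\textbf{Step 1: Green's identity on a truncated region.} Fix real $\lambda>0$ and $\Phi,\Psi\in C^\infty(N;\Lambda^pT^*N)$. Set $u=E_{\lambda,M}(\Phi)$ and $v=E_{\lambda,M}(\Psi)$. Both are co-closed solutions of $(\Delta_p-\lambda^2)w=0$, and since $\lambda$ is real, $\bar\lambda^2=\lambda^2$. Apply Green's identity on $M_\epsilon=\{x\geq \epsilon\}$:
\[
0=\langle \Delta u,v\rangle_{M_\epsilon}-\langle u,\Delta v\rangle_{M_\epsilon}.
\]
Because $\delta u=\delta v=0$, only the $d$-part gives boundary terms. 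These are exactly the two integrals in \eqref{eq13} evaluated at $x=\epsilon$, with $\mu=\lambda$. Hence that boundary expression vanishes for every small $\epsilon$.

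\textbf{Step 2: substitute the asymptotics.} Insert the expansion \eqref{genrepAeq}, writing $\tilde j_\lambda=\tilde h^{(1)}_\lambda+\tilde h^{(2)}_\lambda$. This gives
\[
u|_Z=\tilde h^{(2)}_\lambda(\Phi)+\tilde h^{(1)}_\lambda(S^{\mathrm{abs}}(\lambda)\Phi)+O(x^{\frac{n+1}{2}}),
\]
and similarly for $v$. The remainder terms contribute $o(1)$ to the boundary pairing, because the weights $x^{1-n+2p}$ combined with the $\Pi_p$ normalisation make the leading terms $O(1)$.

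Next I would compute the four bilinear boundary pairings using \eqref{thirdbi}, \eqref{secondbi} and \eqref{firstbi} at $\mu=\lambda$.
\begin{itemize}
\item The outgoing--outgoing pairing gives a constant multiple $c\,\langle S^{\mathrm{abs}}\Phi,S^{\mathrm{abs}}\Psi\rangle$, with $c=\pm 2i\lambda\cdot\mathrm{const}$, up to $O(\epsilon)$.
\item The incoming--incoming pairing gives the opposite sign times $\langle\Phi,\Psi\rangle$.
\item The cross terms carry the oscillating factors $e^{\pm 2i\lambda/\epsilon}$ times Wronskian-type combinations. By \eqref{difasymp}, the leading coefficient of these combinations cancels. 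For the $k=1$ component, $H^{(1)}\overline{(H^{(2)})'}$-type terms combine as $i-i=0$ at leading order; the $k=2$ components, handled via \eqref{leadingrot}, behave the same way.
\end{itemize}
Letting $\epsilon\to 0$ then yields
\[
\langle S^{\mathrm{abs}}\Phi,S^{\mathrm{abs}}\Psi\rangle=\langle\Phi,\Psi\rangle.
\]
Separately, the boundary pairing between a $k=1$ piece and a $k=2$ piece vanishes by the orthogonality of $\phi^{(p)}_\sigma$ and $\xi^{(p)}_\sigma$ from Lemma~\ref{basis}.

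\textbf{Step 3: extension and surjectivity.} The isometry property on a dense subspace gives a unique isometric extension to $L^2$. For unitarity I also need surjectivity. I would get it from the symmetric argument with incoming data, i.e. constructing $E$ using $R_{-\lambda}$, or by taking complex conjugates: $\overline{E_{\lambda}(\Phi)}$ is again a co-closed eigenform whose roles of incoming and outgoing are swapped. This shows $S^{\mathrm{abs}}(\lambda)$ has a left inverse that is itself an isometry, so $S^{\mathrm{abs}}(\lambda)$ is onto.

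\textbf{Step 4: the adjoint relation.} Both $\lambda\mapsto S^{\mathrm{abs}}(\lambda)^*$ and $\lambda\mapsto S^{\mathrm{abs}}(\bar\lambda)^{-1}$ are meromorphic: the first is antiholomorphically reflected from the meromorphic continuation of $R_\lambda$, which gives meromorphy of $A^{\mathrm{abs}}_\lambda$. The two agree on $\mathbb R^+$ by Step 3, so the identity theorem gives
\[
S^{\mathrm{abs}}(\lambda)^*=S^{\mathrm{abs}}(\bar\lambda)^{-1}\qquad(\operatorname{Im}\lambda\geq 0).
\]

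The main obstacle is Step 2. One has to control the boundary pairing uniformly in the spectral sum over $\sigma$: the Hankel asymptotics are not uniform in the order $\ell_p(\sigma)$, so the $O(\epsilon)$ errors must be summable. I would first handle $\Phi,\Psi$ with finitely many nonzero coefficients $c_{1,\sigma},c_{2,\sigma}$, where every estimate is a finite sum. The general case then follows by density, together with boundedness of $A^{\mathrm{abs}}_\lambda$, which is smoothing by Proposition~\ref{genrepA}.
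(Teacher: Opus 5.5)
Your isometry argument is the paper's: Green's identity on $M_\epsilon$ with the $\delta$-terms dropping out, then substitution of $E_{\lambda,M}(\Phi)|_Z=\tilde h^{(2)}_\lambda(\Phi)+\tilde h^{(1)}_\lambda(S^{\mathrm{abs}}(\lambda)\Phi)+O(x^{\frac{n+1}{2}})$. The incoming/outgoing cross terms cancel, giving $0=2i\lambda\bigl(\langle\Phi,\Psi\rangle-\langle S^{\mathrm{abs}}\Phi,S^{\mathrm{abs}}\Psi\rangle\bigr)$, and density finishes it.

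You diverge in the last two steps. The paper gestures at reversibility of incoming and outgoing data. But it actually deduces surjectivity from $S^{\mathrm{abs}}(\lambda)^*=S^{\mathrm{abs}}(\overline\lambda)^{-1}$, which it asserts for $\operatorname{Im}\lambda>0$ via Green's identity for $E_{\lambda,M}(\Phi)$ and $E_{\overline\lambda,M}(\Psi)$, and then carries to $\mathbb R^+$ by nontangential limits.

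You go the other way. For real $\lambda$, $\overline{E_{\lambda,M}(\Phi)}$ is a co-closed solution with incoming and outgoing data swapped. This gives an isometric left inverse, hence a two-sided one, so surjectivity holds on the axis. The off-axis relation then follows from the identity theorem.

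Your order is the more economical one. It never requires interpreting the boundary pairing at non-real $\lambda$, where one of $e^{\pm i\lambda/x}$ grows exponentially. Its analytic input is meromorphy of $S^{\mathrm{abs}}$ on a connected, conjugation-invariant domain containing $\mathbb R^+$, and of $S^{\mathrm{abs}}(\lambda)^{-1}$ by analytic Fredholm theory since $A^{\mathrm{abs}}_\lambda$ is compact. Both are already presupposed by ``the analytic continuation'' in the statement.

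Two points to fix when you write it out. First, apply the identity theorem to the holomorphic pair $S^{\mathrm{abs}}(\overline\lambda)^*$, $S^{\mathrm{abs}}(\lambda)^{-1}$; the two functions as you wrote them are antiholomorphic. Second, matching the data of $\overline{E_{\lambda,M}(\Phi)}$ with those of some $E_{\lambda,M}(\Phi')$ needs the flux identity for arbitrary co-closed solutions with such expansions: a zero incoming coefficient forces a zero outgoing coefficient. This is the same computation as your Step~2.

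Your remedy for non-uniformity in $\sigma$ does not do what you claim. $A^{\mathrm{abs}}_\lambda\Phi$ generally has infinitely many modes even when $\Phi$ has finitely many, so the outgoing term is still an infinite Hankel series. The remedy is also unnecessary. Proposition~\ref{genrepA} gives the outgoing part as $e^{i\lambda/x}x^{\frac{n-1}{2}}\Theta$ with $\Theta$ smooth up to $x=0$. So the flux can be computed directly from the leading tangential coefficients, with a uniform $O(\epsilon)$ error and no mode-by-mode asymptotics. This also removes any need for orthogonality of $\phi^{(p)}_0$ and $\xi^{(p)}_0$ in the harmonic case, which Lemma~\ref{basis} does not actually provide.
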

\begin{proof}
We begin by recording the asymptotic normalization.  By Proposition \ref{genrepA}, on the end $Z$ the generalized eigenfunction has an incoming part and an outgoing part.   The expression \eqref{genrepAeq}   can be written as
\begin{align*}
E_{\lambda,M}(\Phi)(x,\theta)|_Z=\tilde{h}^{(2)}_{\lambda}(\Phi)(x,\theta)+\tilde{h}^{(1)}_{\lambda}(S^{\mathrm{abs}}(\lambda)\Phi)(x,\theta)
\end{align*}
which is what we will use for the proof. Indeed, using \eqref{ex} and \eqref{ex2} this expansion becomes 
\begin{align}\label{expansion}
\Pi_pE_{\lambda,M}(\Phi)(x,\theta)
&=e^{-i\lambda/x}C_-\Phi
  +e^{i\lambda/x}C_+S^{\mathrm{abs}}(\lambda)\Phi+O(x)
\end{align}
as $x\downarrow0$. Equivalently, if $F=C_-(\lambda)\Phi$ is the phase-normalized incoming coefficient, then
\begin{align}\label{phase-from-abs}
\mathcal{S}^t_M(\lambda)F
=C_+S^{\mathrm{abs}}(\lambda)C_-^{-1}F.
\end{align}
The principal idea is then that the cross terms cancel in the limit of the boundary pairing. 
Fix $\Phi,\Psi\in C^\infty(N;\Lambda^pT^*N)$ and set
\[
        u=E_{\lambda,M}(\Phi),\qquad v=E_{\lambda,M}(\Psi).
\]
For $\epsilon>0$ let
\[
        M_\epsilon:=\{(x,\theta)\in M: x\geq \epsilon\}.
\]
Since $(\Delta_p-\lambda^2)u=(\Delta_p-\lambda^2)v=0$ and the forms are co-closed, Green's identity for the Hodge Laplacian gives  
\begin{align}\label{greenuni}
0 &= \, \langle \Delta_pu,v\rangle_{M_\epsilon}
  -\langle u,\Delta_pv\rangle_{M_\epsilon} \\  \nonumber
&=
\int_{\partial M_\epsilon}
   \left\langle x^{-2}dx\wedge u,dv\right\rangle x^{1-n}\,dS_{\epsilon}
-
\int_{\partial M_\epsilon}
   \left\langle du,x^{-2}dx\wedge v\right\rangle x^{1-n}\,dS_{\epsilon}
\end{align}
where $dS_\epsilon$ is the volume form on $N$ with respect to the $h(\epsilon)$ metric.
We re-label the boundary pairing 
\begin{align}\label{bpairing}
b_{\epsilon}(u,v)=\int_{\partial M_\epsilon}
   \left\langle x^{-2}dx\wedge u,dv\right\rangle x^{1-n}\,dS_{\epsilon}
-
\int_{\partial M_\epsilon}
   \left\langle du,x^{-2}dx\wedge v\right\rangle x^{1-n}\,dS_{\epsilon}. 
\end{align} 
 Substituting \eqref{expansion} into \eqref{greenuni} gives four leading terms for $b_{\epsilon}(u,v)$. The possible cases we need to analyze are 
 \begin{align*}
 b_{\epsilon}(\tilde{h}_{k,\lambda}^{(1)}(\Phi), \tilde{h}_{m,\overline{\lambda}}^{(1)}(\Psi)), \quad b_{\epsilon}(\tilde{h}_{k,\lambda}^{(1)}(\Phi),\tilde{h}_{m,\overline{\lambda}}^{(2)}(\Psi)) \quad \text{for} \quad k,\,m=1,2
 \end{align*}
Given  the behavior of the eigenfunctions on the boundary as reflected in \eqref{expansion}, the mixed terms for $k\neq m$ vanish. The remaining boundary pairings have already been computed symbolically as $\epsilon \to 0$ as part of the previous proof as \eqref{thirdbi}, \eqref{secondbi}, and \eqref{firstbi}, provided that we select either $\mu=\lambda$ or $\mu=-\lambda$, respectively.  We see this in the exact computations of the bilinear forms in \eqref{thirdbi}, \eqref{secondbi}, and \eqref{firstbi}, and we get that their sum corresponds to $\delta_0(\lambda-\mu)$ as $\epsilon \rightarrow 0$, exactly as computed before.
Using these explicit computations, we see that
\begin{align}
0
&=2i\lambda\left(
      \langle C_-\Phi,C_-\Psi\rangle_{L^2(N)}
      -\langle C_+S^{\mathrm{abs}}(\lambda)\Phi,C_+S^{\mathrm{abs}}(\lambda)\Psi\rangle_{L^2(N)}
     \right)\notag\\
&=2i\lambda\left(
      \langle \Phi,\Psi\rangle_{L^2(N)}
      -\langle S^{\mathrm{abs}}(\lambda)\Phi,S^{\mathrm{abs}}(\lambda)\Psi\rangle_{L^2(N)}
     \right).
\end{align}
Since $\lambda>0$, this proves
\[
        \langle S^{\mathrm{abs}}(\lambda)\Phi,S^{\mathrm{abs}}(\lambda)\Psi\rangle_{L^2(N)}
        =\langle \Phi,\Psi\rangle_{L^2(N)}
        \qquad
        \Phi,\Psi\in C^\infty(N;\Lambda^pT^*N).
\]
Thus $S^{\mathrm{abs}}(\lambda)$ is an isometry on a dense subspace of the Hilbert space; it therefore extends uniquely to an isometry on its $L^2$ closure.

It remains to show that the range is all of the Hilbert space.  The incoming/outgoing construction is reversible: if the incoming coefficient is replaced by $S^{\mathrm{abs}}(\lambda)\Phi$, then applying the same Green identity to the solution with incoming and outgoing roles interchanged ($\lambda\rightarrow -\lambda$) gives an incoming coefficient $\Phi$.  Equivalently, for $\operatorname{Im}\lambda>0$ the outgoing resolvent construction gives the adjoint relation
\begin{align}\label{Sadjointanalytic}
        S^{\mathrm{abs}}(\lambda)^*=S^{\mathrm{abs}}(\overline{\lambda})^{-1},
\end{align}
by applying Green's identity to $E_{\lambda,M}(\Phi)$ and $E_{\overline{\lambda},M}(\Psi)$ and then passing to the boundary.  Taking nontangential limits to the real axis in \eqref{Sadjointanalytic} gives $S^{\mathrm{abs}}(\lambda)^*=S^{\mathrm{abs}}(\lambda)^{-1}$ for $\lambda>0$.  Hence the isometric extension is onto, and $S^{\mathrm{abs}}(\lambda)$ is unitary. Since $C_\pm$ are unitary phase factors, the phase-normalized matrix $\mathcal{S}^t_M(\lambda)=C_+S^{\mathrm{abs}}(\lambda)C_-^{-1}$ is unitary as well.
\end{proof}
\begin{proof}[Proof of Theorem \ref{lengths}]
The result follows by combining Proposition~\ref{scatteringbase} with the conjugation formula \eqref{phase-abs-relation} and Proposition~\ref{uniS}. The outgoing correction in Proposition~\ref{genrepA} is represented near $x=0$ by a term of the form
$$
e^{i\lambda/x}x^{(n-1)/2}\Theta(\Phi),
$$
where $\Theta(\Phi)$ is smooth up to the boundary and depends smoothly on the boundary datum $\Phi$. Hence its leading coefficient
$$
\Theta_0(\Phi):=\Theta(\Phi)\big|_{x=0}=C_+A^{\mathrm{abs}}_{M,\lambda}(\Phi).
$$
By Proposition~14 of \cite{melbook2}, the outgoing solution operator sends compactly supported smooth data to a solution whose outgoing leading coefficient is obtained by a smooth boundary kernel, so the map defining $A^{\mathrm{abs}}_\lambda$ is smoothing. Let
\[
\alpha_M(\lambda)
:=\mathcal{S}^t_M(\lambda)-\mathcal{S}^t_{\widetilde M_0}(\lambda)
=C_+A^{\mathrm{abs}}_{M,\lambda}C_-^{-1}.
\]
Since $C_\pm$ are diagonal unitary maps and $A^{\mathrm{abs}}_{M,\lambda}$ is smoothing, $\alpha_M(\lambda)$ is smoothing. Under assumption \eqref{closeness2}, the same argument gives a map $H^{-s}(N;\Lambda^pT^*N)\rightarrow L^2(N;\Lambda^pT^*N)$ for $s<n_0-1$, as in \cite{JSB}.
\end{proof}
The proof of Corollary~\ref{lengthscor} then follows immediately from \eqref{alphatilde}.

In the low-energy estimates below we sometimes write $A_\lambda$ for $A^{\mathrm{abs}}_\lambda$ to lighten notation.
For completeness we record the limiting absorption principle
\begin{theo}\label{localresolvent}
Let $\lambda\in \mathbb{R}$, $\lambda\neq 0$. Then it follows that 
\begin{align}
x^{\alpha} R_{\lambda+i0} x^{\alpha}=\lim\limits_{\epsilon \rightarrow 0^+}x^{\alpha} R_{(\lambda+i\epsilon)} x^{\alpha}
\end{align}
is a continuous map from $L^2_{comp}(M;\Lambda^{p}T^*M)$ to $L^2_{\loc}( M;\Lambda^{p}T^* M)$ for $\alpha>\frac{1}{2}$.
\end{theo}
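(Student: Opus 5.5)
Note first that with $x=1/r$ the weight $x^{\alpha}$ is $\langle r\rangle^{-\alpha}$, so the statement is the usual weighted limiting absorption principle with weight exponent $\alpha>\tfrac12$. I plan to prove it by combining a Mourre-type positive commutator estimate on the end $Z$ with the absence of positive embedded eigenvalues (Proposition~\ref{prop:no-positive-eigenforms}). The meromorphic continuation from \cite{melbook2} quoted in Section~2 gives the limit on $L^2_{comp}\to L^2_{\loc}$ directly. The real content is therefore uniform boundedness of $x^{\alpha}R_{\lambda+i\epsilon}x^{\alpha}$ as $\epsilon\to0^+$, together with the absence of a pole at real $\lambda\neq0$.

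\textbf{Step 1: a conjugate operator on the end.} Take the generator of dilations on $Z$, written in the $\Pi_p$ picture as $\mathcal{A}=\tfrac{1}{2i}(x\,x^2\partial_x+x^2\partial_x\,x)\otimes\Id$, cut off by $\chi$. Using the block form \eqref{eqn:Lapl} of $B_{p+1}A_p+A_{p-1}B_p$ on $\widetilde M_0$, I compute $i[\Delta_p,\chi\mathcal A\chi]$. Every term in \eqref{eqn:Lapl} is homogeneous of degree $2$ under $x\mapsto tx$, including the off-diagonal couplings $2x^2d_N$ and $2x^2\delta_N$. Hence the commutator is $2\Delta_p$ modulo compactly supported terms and terms coming from $h(x)-h_0$. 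Under \eqref{closeness} or \eqref{closeness2} those latter terms are $O(x^{3})$ relative to $\Delta_p$, so they are $\Delta_p$-compact.

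\textbf{Step 2: Mourre estimate and conclusion.} Fix $\lambda\neq0$ and let $I\ni\lambda^2$ be a small interval. I expect
\[
E_I\, i[\Delta_p,\chi\mathcal A\chi]\,E_I\geq 2\lambda^2(1-\delta)E_I+K
\]
with $K$ compact. Since there are no eigenvalues in $I$, shrinking $I$ removes $K$. The abstract Mourre theory, in the form of \cite{RS} or the Jensen--Mourre--Perry theorem, then gives boundedness of $\langle\mathcal A\rangle^{-\alpha}R_{\lambda+i\epsilon}\langle\mathcal A\rangle^{-\alpha}$ uniformly in $\epsilon$ for $\alpha>\tfrac12$. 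The corresponding H\"older continuity gives existence of the limit. Finally, I replace $\langle\mathcal A\rangle^{-\alpha}$ by $x^{\alpha}$ using the standard bound $\|\langle\mathcal A\rangle^{\alpha}x^{\alpha}(\Delta_p+1)^{-1}\|<\infty$, which holds since $\mathcal A$ is first order with coefficient $x^{3}\partial_x\sim r\partial_r$.

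\textbf{Main obstacle.} I expect the hard part to be the $\Delta_p$-boundedness of the double commutator $[[\Delta_p,\mathcal A],\mathcal A]$ and the compactness of the error terms on forms. The difficulty is that the Hodge Laplacian on forms is not scalar: the zeroth-order terms $x^2\alpha(\alpha-1)$ and $x^2(\beta+2)(\beta+3)$ and the $d_N,\delta_N$ couplings must all be checked to be homogeneous. Reducing to the invariant types of Lemma~\ref{lem1} and Proposition~\ref{prop1} settles this on $\widetilde M_0$. On $M$, the $O(x^{n_0})$ metric error with $n_0\geq3$ gives enough decay for compactness.

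A more direct alternative, if the Mourre route is awkward, is a contradiction argument. Suppose $\|x^{\alpha}R_{\lambda_j+i\epsilon_j}x^{\alpha}f_j\|=1$ with $f_j\to0$. A limit $u$ is then outgoing, and the Hankel asymptotics of Proposition~\ref{genrepA} together with the boundary pairing \eqref{bpairing} force its outgoing coefficient to vanish. Hence $u\in L^2$, so $u=0$ by Proposition~\ref{prop:no-positive-eigenforms}, contradicting the normalization.
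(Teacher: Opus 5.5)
Your main route is correct in outline but genuinely different from the paper's. The paper gives no argument of its own. It says the proof of Proposition~14 of \cite{melbook2} carries over to forms, and that Proposition~\ref{prop:no-positive-eigenforms} supplies uniqueness. In effect it relies on Melrose's scattering-calculus construction of outgoing solutions $e^{i\lambda/x}x^{(n-1)/2}\Theta$, plus the boundary-pairing uniqueness argument that the paper writes out later in the proof of Theorem~\ref{stone1}. You instead run Mourre's commutator method with a cut-off dilation generator, and use Proposition~\ref{prop:no-positive-eigenforms} only to remove the compact error in the Mourre estimate.

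What your route buys:
\begin{itemize}
\item It does not need the meromorphic continuation you invoke at the start, since H\"older continuity already gives existence of the limit.
\item It proves a genuinely weighted bound $x^{\alpha}L^2\to x^{-\alpha}L^2$. The statement as written, $L^2_{\comp}\to L^2_{\loc}$, does not see the weights at all.
\item The exact degree-$2$ homogeneity of every entry of \eqref{eqn:Lapl} under $x\mapsto tx$ makes the commutator computation on forms essentially free.
\end{itemize}
What it costs is the regularity check ($C^2(\mathcal A)$, domain invariance). That check is routine here: the cut-off generator is a b-vector field, so commuting with it preserves $x^{k}\mathrm{Diff}^2_{sc}$, and the metric error is relatively compact for any positive power of $x$. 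Melrose's route instead gives the finer oscillatory structure of $R_{\lambda+i0}f$, which the paper reuses in Proposition~\ref{genrepA}.

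\textbf{Corrections.}

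First, your explicit $\mathcal A$ is not the dilation generator. Since $x^2\partial_x=-\partial_r$, you have $x\cdot x^2\partial_x=x^3\partial_x=-r^{-1}\partial_r$, not $r\partial_r$. With this operator, $i[\Delta_p,\chi\mathcal A\chi]$ has the form $x^2\cdot(\text{second-order scattering operator})$, hence is compact on $E_I L^2$, and no Mourre estimate holds. You need $x^{-1}\cdot x^2\partial_x=x\partial_x=-r\partial_r$, symmetrized in $L^2(x^{-2}\,dx\,\mathrm{vol}_N)$ and with the sign chosen so that $i[\Delta_p,\mathcal A]=2\Delta_p$ on the model end. This is the generator of $x\mapsto tx$, which is what your homogeneity argument actually uses.

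Second, the fallback contradiction argument is incomplete as sketched. To get a nonzero limit $u$, and to know that $u$ is outgoing, you need an a priori radiation or propagation estimate on the $u_j$ near $x=0$. Without it the normalized mass can escape to infinity. The boundary pairing \eqref{bpairing} only becomes usable once that estimate is in hand.
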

\begin{proof}
The proof follows that of \cite{melbook2}, Proposition~14, for differential forms.  The uniqueness of solutions to the Helmholtz equation follows from Proposition~\ref{prop:no-positive-eigenforms}.
\end{proof} 

\section{Scattering matrix}

For functions $f,g:V \to W$ valued in a locally convex topological vector space $W$ and $h:  V \to \R$, we write $f = g + O_W(h)$ if for every continuous seminorm $q$ on $W$ there is a constant $C_q$ such that $q(f(\lambda)-g(\lambda)) \leq C_q |h(\lambda)|$ for all $\lambda \in V$.
 
\begin{lem}\label{grossboundsE}
For any $D>0$ and any basis vector $\Phi_\sigma$ for $L^2(N,\Lambda^pT^*N)$ with eigenvalue $\sigma$ as in Lemma~\ref{basis}, we have that for $|\lambda|<D$
\begin{align}\label{grossboundsJ}
\tilde{j}_{\lambda}(\Phi_{\sigma})=O_{C^{\infty}(\widetilde{M})} \left(\max\left\{\frac{\lambda^{\ell_{p}(\sigma)+\frac{1}{2}}}{\Gamma(\ell_{p}(\sigma)
+1)},  \frac{\lambda^{\ell_{p-1}(\sigma)-\frac{1}{2}} }{\Gamma(\ell_{p-1}(\sigma)+1)},\frac{\lambda^{\ell_{p-1}(\sigma)-\frac{1}{2}} }{\Gamma(\ell_{p-1}(\sigma))} \right\} \right).
\end{align} 
Moreover for all $|\lambda|<D$ and $\mathrm{Im}\lambda\geq 0$ we have that
\begin{align}\label{grossboundsB}
E_{\lambda,M}(\Phi_\sigma)|_Z= O_{C^{\infty}(Z)}\left(\max\left\{\frac{\lambda^{\ell_{p}(\sigma)-\frac{3}{2}}}{\Gamma(\ell_{p}(\sigma)
+1)}, \frac{\lambda^{\ell_{p-1}(\sigma)-\frac{5}{2}} }{\Gamma(\ell_{p-1}(\sigma)+1)},\frac{\lambda^{\ell_{p-1}(\sigma)-\frac{5}{2}}}{\Gamma(\ell_{p-1}(\sigma))} \right\} \right).
\end{align}
\end{lem}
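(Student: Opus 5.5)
The plan is to treat the two estimates separately. The bound \eqref{grossboundsJ} follows from explicit Bessel function bounds. The bound \eqref{grossboundsB} is then obtained by combining it with the representation of $E_{\lambda,M}$ through the resolvent, plus a low-energy resolvent estimate.

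For \eqref{grossboundsJ}, write $\tilde j_\lambda(\Phi_\sigma)$ as in Definition~\ref{def2}. For the first-type component I would use the uniform estimate
\[
|J_\nu(z)|\le \frac{|z/2|^{\nu}}{\Gamma(\nu+1)}e^{|\operatorname{Im} z|},
\]
together with the analogous bounds for all $x$-derivatives. These derivative bounds follow from the recurrences \eqref{derivativebessel}, since each $x^2\partial_x$ produces terms $\lambda J_{\nu\pm1}$ or $xJ_\nu$. The prefactor $(2\pi\lambda)^{1/2}$ contributes $\lambda^{1/2}$, which gives the term $\lambda^{\ell_p+1/2}/\Gamma(\ell_p+1)$ uniformly on compact sets of $x$. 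For the second-type component I would apply \eqref{eq11} with $J$ in place of $H^{(2)}$. The factor $\lambda^{-1}\cdot\lambda^{1/2}$ multiplies two kinds of terms:
\begin{itemize}
\item $J_{\ell_{p-1}}$, which is of size $\lambda^{\ell_{p-1}}/\Gamma(\ell_{p-1}+1)$ and yields the second entry of the maximum;
\item $J_{\ell_{p-1}-1}$, which is of size $\lambda^{\ell_{p-1}}/\Gamma(\ell_{p-1})$ after the $\lambda/x$ scaling and yields the third entry.
\end{itemize}
Seminorms in $C^\infty$ over compact subsets of $\widetilde M$ are then handled by the same recurrences, with constants depending only on $D$ and the compact set.

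For \eqref{grossboundsB}, I would use $E_{\lambda,M}(\Phi)=\chi\tilde j_\lambda(\Phi)-R_\lambda F_\lambda$ with $F_\lambda=([\Delta,\chi]+\chi(\Delta-\Delta_{\widetilde M_0}))\tilde j_\lambda(\Phi)$. By the first part, $F_\lambda$ obeys the same bound as $\tilde j_\lambda$ in weighted spaces, because the coefficients of $\Delta-\Delta_{\widetilde M_0}$ vanish to high order in $x$. The extra loss of $\lambda^{-2}$ relative to \eqref{grossboundsJ} should then come from a low-energy bound of the form $\|x^{a}R_\lambda x^{a}\|=O(|\lambda|^{-2})$ for $|\lambda|<D$, $\operatorname{Im}\lambda\ge0$, in suitable weighted spaces. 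I would try to obtain this by combining Theorem~\ref{localresolvent} for $\lambda$ bounded away from $0$ with the meromorphic structure of $R_\lambda$ near $0$. Near $\lambda=0$ one expects at worst a $\lambda^{-2}$ singularity coming from $L^2$ harmonic forms, which are controlled by Proposition~\ref{prop:no-positive-eigenforms} away from zero and by finite rank Laurent coefficients at zero.

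I expect the main obstacle to be this uniform low-energy resolvent bound, particularly in even dimensions, where $R_\lambda$ is only defined on a cut plane. I would first try to bound $\lambda^2R_\lambda$ in operator norm via the spectral theorem on the region $\operatorname{Im}\lambda\ge0$. The bound $\|R_\lambda\|\le 1/\operatorname{dist}(\lambda^2,[0,\infty))$ fails near the real axis, so there I would pass to the weighted limit using Theorem~\ref{localresolvent} and accept the $|\lambda|^{-2}$ loss. Finally, smoothness in $Z$ follows from elliptic regularity applied to $(\Delta-\lambda^2)E=0$, which upgrades the bounds to $C^\infty(Z)$ seminorms.
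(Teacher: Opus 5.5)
Your proposal follows the paper's proof. The bound \eqref{grossboundsJ} comes from applying the Bessel estimate \eqref{besselineq} and the recurrences to the first-type term and to the two kinds of terms ($J_{\ell_{p-1}}$ and $J_{\ell_{p-1}-1}$) produced by \eqref{eq11}, and \eqref{grossboundsB} comes from writing $E_{\lambda,M}(\Phi_\sigma)=\chi\tilde j_\lambda(\Phi_\sigma)-R_\lambda F_\lambda$ and using that the resolvent has at most a double pole at $\lambda=0$, which costs exactly $\lambda^{-2}$. The paper does not derive that low-energy bound either but simply asserts it (together with analyticity near the real axis and in the upper half plane), so the step you single out as the main obstacle is taken as given there; if you want to justify it, note that limiting absorption alone gives no uniformity as $\lambda\to0$, whereas in the meromorphic case the spectral bound $\|R_{i\tau}\|_{L^2\to L^2}\le\tau^{-2}$ on the imaginary axis caps the Laurent order at two.
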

\begin{proof}
Using \eqref{eq11}, \eqref{derivativebessel} and the upper bound \eqref{besselineq} we get that 
\[
\begin{split}\label{eigenfupper}
|\tilde{j}_{\lambda}(\Phi_{\sigma})|\leq \; & C\sqrt{\lambda} \left[ \frac{1}{\sqrt x} J_{\ell_{p}(\sigma)} + \frac{\sqrt x}{\lambda} J_{\ell_{p-1}(\sigma)}  + \frac{1}{\sqrt{x}} J_{\ell_{p-1}(\sigma)-1} \right]\\
\leq \; &C \left[\frac{\lambda^{\ell_{p}(\sigma)+\frac{1}{2}}}{x^{\ell_{p}(\sigma)+\frac{1}{2}} \,\Gamma(\ell_{p}(\sigma)+1)} +  \frac{\lambda^{\ell_{p-1}(\sigma)-\frac{1}{2}}}{x^{\ell_{p-1}(\sigma) -\frac{1}{2}} \,\Gamma(\ell_{p-1}(\sigma)+1)} \right.\\
& \quad \left.+  \frac{\lambda^{\ell_{p-1}(\sigma)-\frac{1}{2}}}{x^{\ell_{p-1}(\sigma) -\frac{1}{2}} \,\Gamma(\ell_{p-1}(\sigma))}  \right].
\end{split}
\]
Therefore, the family $\tilde{j}_{\lambda}(\Phi_{\sigma})$ is bounded in $L^2_{\loc}( M;\Lambda^{p}T^*M)$ and \eqref{grossboundsJ} follows. $\Delta\tilde{j}_{\lambda}(\Phi_{\sigma})=\lambda^2\tilde{j}_{\lambda}(\Phi_{\sigma})$, this shows that the family is bounded in $H^s_{loc}( M;\Lambda^{p}T^*M)$ for any $s\in 2\mathbb{N}$. The resolvent $R_{\lambda}$ maps $H^s_{comp}(M;\Lambda^{p}T^* M)\rightarrow H_{loc}^{s+2}(M;\Lambda^{p}T^*M)$, and since the resolvent has a singularity of order at most two at zero and is analytic near the real line and in the upper half plane, it follows that $E_{\lambda,M}(\Phi_\sigma)|_Z$ has an additional factor of order $O(\lambda^{-2})$ as $\lambda \to 0$ due to $R_{\lambda}(\Delta_M-\lambda^2)(\chi \tilde{j}_{\lambda}(\Phi))$, giving the stated bound.
\end{proof}

The following lemma controls the low-energy behaviour of the scattering amplitude.
\begin{lem}\label{scatteringAbounds}
For any $C^{\frac{q}{2}}$ semi-norm on $Z$, there exists a constant $\delta_1\in (0,\epsilon_1)$ such that for any $\lambda\in\mathbb{C}$ with $0<|\lambda|\leq \lambda_0$, with $\lambda_0$ small a positive the following bounds hold:\\
If $\Phi_{\sigma}$ and $\Phi_{\nu}$ are of the type $\varphi_{\sigma}^{(p)}$ and $\varphi_{\nu}^{(p)}$ respectively and $\ell_p(\nu)\neq 0$:
\begin{align}\label{Abound1}
&|\langle A_{\lambda}\Phi_{\sigma},\Phi_{\nu}\rangle|=O_q\left(\frac{\langle \ell_p(\sigma)\rangle^{q}
\langle \ell_p(\nu)\rangle^{q}\delta_1^{-\ell_p(\nu)-\ell_p(\sigma)}}{\Gamma(\ell_{p}(\sigma)
+1)\Gamma(\ell_p(\nu))}\lambda^{\ell_{p}(\sigma)+\ell_{p}(\nu)-2}\right).
\end{align}
If $\ell_p(\nu)=0$ then we have
\begin{align}
|\langle A_{\lambda}\Phi_{\sigma},\Phi_{\nu}\rangle|=O_q\left(\frac{\langle \ell_p(\sigma)\rangle^{q}
}{\Gamma(\ell_{p}(\sigma)
+1)|\log\lambda|}\lambda^{\ell_{p}(\sigma)-2}\delta_1^{-\ell_p(\sigma)}\right).
\end{align} 
If however $\Phi_{\sigma}$ and $\Phi_{\nu}$ are of the type $\xi_{\sigma}^{(p)}$ and $\xi_{\nu}^{(p)}$ with $\ell_{p-1}(\nu)>1$ we have: 
\begin{align}\label{Abound2}
&|\langle A_{\lambda}\Phi_{\sigma},\Phi_{\nu}\rangle|=O_q\left(\frac{\langle \ell_p(\sigma)\rangle^{q}
\langle \ell_p(\nu)\rangle^{q}\delta_1^{-\ell_{p-1}(\nu)-\ell_{p-1}(\sigma)}}{\Gamma(\ell_{p-1}(\sigma))\Gamma(\ell_{p-1}(\nu)-1)
}\lambda^{\ell_{p-1}(\sigma)+\ell_{p-1}(\nu)-4}\right).
\end{align}
Moreover, if $\ell_{p-1}(\nu)=1$  we have that 
\begin{align}
|\langle A_{\lambda}\Phi_{\sigma},\Phi_{\nu}\rangle|=O_q\left(\frac{\langle \ell_p(\sigma)\rangle^{q}
}{\Gamma(\ell_{p-1}(\sigma))|\log\lambda|
}\lambda^{\ell_{p-1}(\sigma)-3}\delta_1^{-\ell_{p-1}(\sigma)}\right).
\end{align}
If $\ell_{p-1}(\nu)=0$ we have that there is cancellation and 
\begin{align}
|\langle A_{\lambda}\Phi_{\sigma},\Phi_{\nu}\rangle|=O_q\left(\frac{\langle \ell_p(\sigma)\rangle^{q}}
{\Gamma(\ell_{p-1}(\sigma))
}\lambda^{\ell_{p-1}(\sigma)-2}\delta_1^{-\ell_{p-1}(\sigma)}\right).
\end{align}
When $\Phi_{\sigma}$ and $\Phi_{\nu}$ are of the type $\xi_{\sigma}^{(p)}$ and $\xi_{\nu}^{(p)}$ with $\ell_{p-1}(\nu)\in (0,1)$ we have: 
\begin{align}\label{Abound3}
&|\langle A_{\lambda}\Phi_{\sigma},\Phi_{\nu}\rangle|=O_q\left(\frac{\langle \ell_p(\sigma)\rangle^{q}
\langle \ell_p(\nu)\rangle^{q}\delta_1^{-\ell_{p-1}(\nu)-\ell_{p-1}(\sigma)}}{\Gamma(\ell_{p-1}(\sigma))\Gamma(\ell_{p-1}(\nu))
}\lambda^{\ell_{p-1}(\sigma)+\ell_{p-1}(\nu)-3}\right).
\end{align}
Finally we record the mixed terms, in which $\Phi_\sigma$ and $\Phi_\nu$ are of different Hodge type.  If $\Phi_{\sigma}$ is of the type $\varphi^{(p)}_{\sigma}$ and $\Phi_{\nu}$ is of the type $\xi^{(p)}_{\nu}$, then
\begin{align}\label{Amixed1}
|\langle A_{\lambda}\Phi_{\sigma},\Phi_{\nu}\rangle|=O_q\left(\frac{\langle \ell_p(\sigma)\rangle^{q}\langle \ell_p(\nu)\rangle^{q}\delta_1^{-\ell_{p}(\sigma)-\ell_{p-1}(\nu)}}{\Gamma(\ell_{p}(\sigma)+1)\,\Gamma(\ell_{p-1}(\nu)-1)}\lambda^{\ell_{p}(\sigma)+\ell_{p-1}(\nu)-3}\right)
\qquad \ell_{p-1}(\nu)>1,
\end{align}
\begin{align}\label{Amixed1log}
|\langle A_{\lambda}\Phi_{\sigma},\Phi_{\nu}\rangle|=O_q\left(\frac{\langle \ell_p(\sigma)\rangle^{q}\delta_1^{-\ell_{p}(\sigma)}}{\Gamma(\ell_{p}(\sigma)+1)\,|\log\lambda|}\lambda^{\ell_{p}(\sigma)-2}\right)
\qquad \ell_{p-1}(\nu)=1,
\end{align}
\begin{align}\label{Amixed1low}
|\langle A_{\lambda}\Phi_{\sigma},\Phi_{\nu}\rangle|=O_q\left(\frac{\langle \ell_p(\sigma)\rangle^{q}\langle \ell_p(\nu)\rangle^{q}\delta_1^{-\ell_{p}(\sigma)-\ell_{p-1}(\nu)}}{\Gamma(\ell_{p}(\sigma)+1)\,\Gamma(\ell_{p-1}(\nu))}\lambda^{\ell_{p}(\sigma)+\ell_{p-1}(\nu)-2}\right)
\qquad \ell_{p-1}(\nu)\in(0,1).
\end{align}
If $\Phi_{\sigma}$ is of the type $\xi^{(p)}_{\sigma}$ and $\Phi_{\nu}$ is of the type $\varphi^{(p)}_{\nu}$, then
\begin{align}\label{Amixed2}
|\langle A_{\lambda}\Phi_{\sigma},\Phi_{\nu}\rangle|=O_q\left(\frac{\langle \ell_p(\sigma)\rangle^{q}\langle \ell_p(\nu)\rangle^{q}\delta_1^{-\ell_{p-1}(\sigma)-\ell_{p}(\nu)}}{\Gamma(\ell_{p-1}(\sigma))\,\Gamma(\ell_{p}(\nu))}\lambda^{\ell_{p-1}(\sigma)+\ell_{p}(\nu)-3}\right)
\qquad \ell_{p}(\nu)\neq0,
\end{align}
\begin{align}\label{Amixed2log}
|\langle A_{\lambda}\Phi_{\sigma},\Phi_{\nu}\rangle|=O_q\left(\frac{\langle \ell_p(\sigma)\rangle^{q}\delta_1^{-\ell_{p-1}(\sigma)}}{\Gamma(\ell_{p-1}(\sigma))\,|\log\lambda|}\lambda^{\ell_{p-1}(\sigma)-3}\right)
\qquad \ell_{p}(\nu)=0.
\end{align}
\end{lem}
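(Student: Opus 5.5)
The plan is to obtain $\langle A_\lambda\Phi_\sigma,\Phi_\nu\rangle$ from a boundary-pairing (Green's identity) representation, in the spirit of the scalar argument of \cite{christiansen}, and then estimate each factor using the small-argument bounds for Bessel and Hankel functions together with Lemma~\ref{grossboundsE}.

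First I will fix $\delta_1\in(0,\epsilon_1)$ so that $\chi\equiv1$ on $\{x<\delta_1\}$. By Proposition~\ref{genrepA}, on $\{x<\delta_1\}$ the difference $E_{\lambda,M}(\Phi_\sigma)-\tilde j_\lambda(\Phi_\sigma)$ is an outgoing solution, equal to $\tilde h^{(1)}_\lambda(A_\lambda\Phi_\sigma)$ up to an $O(x^\infty)$ error. Next I will pair it, via the boundary form $b_\epsilon$ of \eqref{bpairing}, with the regular model solution $\tilde j_{\bar\lambda}(\Phi_\nu)$ on the slab $\{\epsilon\le x\le\delta_1\}$. Because both forms solve the same Helmholtz equation there (up to the $O(x^\infty)$ error), Green's identity moves the pairing from $x=\epsilon$ to $x=\delta_1$. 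As $\epsilon\to0$, the computations \eqref{thirdbi}, \eqref{secondbi}, \eqref{firstbi} identify the pairing at $x=\epsilon$ with a Wronskian. For $\nu$ of type $\phi$ this gives $c\,\langle A_\lambda\Phi_\sigma,\Phi_\nu\rangle$ with $c\neq0$ a phase constant. The result is
\[
\langle A_\lambda\Phi_\sigma,\Phi_\nu\rangle = c^{-1}\, b_{\delta_1}\big(E_{\lambda,M}(\Phi_\sigma)-\tilde j_\lambda(\Phi_\sigma),\ \tilde j_{\bar\lambda}(\Phi_\nu)\big) + (\text{normalization corrections}).
\]
To keep the $\sigma$- and $\nu$-dependence symmetric, I would rather pair against the singular model solution, i.e.\ the $Y_{\ell}$ or $H^{(1)}_\ell$ part. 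The Wronskian of $J_\ell$ and $Y_\ell$ is $2/(\pi z)$, so pairing $\tilde h^{(1)}$ against $\tilde j$ extracts the coefficient exactly.

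The estimate then factors into a $\sigma$-part and a $\nu$-part, both evaluated on the fixed hypersurface $x=\delta_1$.

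\textbf{The $\sigma$-factor.} This is controlled by \eqref{grossboundsB} and \eqref{grossboundsJ} in a $C^{q/2}$ seminorm. Sobolev regularity in the $N$ variables turns the seminorm into the polynomial weight $\langle\ell_p(\sigma)\rangle^q$. The Bessel bound \eqref{besselineq} at $x=\delta_1$ produces the factor $\delta_1^{-\ell_p(\sigma)}/\Gamma(\ell_p(\sigma)+1)$. The resolvent contributes at most an extra $\lambda^{-2}$.

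\textbf{The $\nu$-factor.} This comes from the dual Hankel/Neumann function evaluated at $x=\delta_1$. For $\ell>0$, $|H^{(1)}_\ell(\lambda/\delta_1)|\lesssim\Gamma(\ell)(\lambda/\delta_1)^{-\ell}$. Dividing by the Wronskian normalization, this becomes the factor $\lambda^{\ell_p(\nu)}\delta_1^{-\ell_p(\nu)}/\Gamma(\ell_p(\nu))$. For $\ell=0$ the Neumann function is logarithmic, which yields the $1/|\log\lambda|$ cases.

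\textbf{Closed-type data.} For $\xi$-type data the same scheme applies, but the relevant functions are $\frac{1}{\lambda\sqrt x}H_{\ell_{p-1}}$ after applying $B_{p+1}$. Expanding with \eqref{eq11}, the orders shift to $\ell_{p-1}-1$, and the extra $\lambda^{-1}$ normalization explains the exponents $-3$ and $-4$ and the factors $\Gamma(\ell_{p-1}-1)$. In the regime $\ell_{p-1}(\nu)\in(0,1)$, the dominant term of $H_{\ell-1}$ changes because the order becomes negative; this gives \eqref{Abound3}. At $\ell_{p-1}(\nu)=1$ the order $\ell-1=0$ produces the log. At $\ell_{p-1}(\nu)=0$ the term $\frac{1}{\sqrt x}H'_0$ cancels against the $\sqrt\sigma$ coupling term, since $\sigma=0$ kills the $\phi^{(p-1)}$ component, and this improves the power. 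The mixed-type cases are obtained by combining a $\sigma$-factor of one type with a $\nu$-factor of the other.

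The main obstacle is making the low-$\lambda$ bookkeeping rigorous and uniform in $\sigma,\nu$. Two points need care. The first is that the resolvent-generated error in $E_{\lambda,M}$ really costs only $\lambda^{-2}$ uniformly, which requires Lemma~\ref{grossboundsE} in a seminorm strong enough to control $d$ on $x=\delta_1$. The second is tracking the exact cancellations in the $\ell_{p-1}\in\{0,1\}$ and $(0,1)$ regimes. I would handle these by writing the small-$z$ series of $H^{(1)}_{\ell}$ and $H^{(1)}_{\ell-1}$ explicitly and keeping only leading terms, using the Appendix's Hankel expansions.
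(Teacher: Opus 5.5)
Your $\nu$-factor step does not work as written, and this is a genuine gap. In an identity $c(\lambda)\langle A_\lambda\Phi_\sigma,\Phi_\nu\rangle=b_{\delta_1}\bigl(E_{\lambda,M}(\Phi_\sigma)-\tilde j_\lambda(\Phi_\sigma),v_\nu\bigr)$, the test solution $v_\nu$ enters \emph{multiplicatively}. By the computations \eqref{thirdbi}--\eqref{firstbi}, the normalization $c(\lambda)$ is, up to a phase, a fixed multiple of $\lambda$, independent of $\ell$. So the upper bound $|H^{(1)}_\ell(\lambda/\delta_1)|\lesssim\Gamma(\ell)(\lambda/\delta_1)^{-\ell}$ cannot become the small factor $\lambda^{\ell}\delta_1^{-\ell}/\Gamma(\ell)$ by ``dividing by the Wronskian normalization''.

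If you pair against the singular solution, as you say you prefer, your bound grows like $\Gamma(\ell_p(\nu))\lambda^{-\ell_p(\nu)}$ in $\nu$. If you pair against the regular solution $\tilde j_{\bar\lambda}(\Phi_\nu)$ and use \eqref{besselineq}, you do get the pure power bounds \eqref{Abound1}, \eqref{Abound2}, \eqref{Amixed1}, \eqref{Amixed2}, up to harmless factors. You also get the case $\ell_{p-1}(\nu)=0$. There the gain comes from the vanishing of the coefficient $\frac n2-p$ of the $\frac{\sqrt x}{\lambda}$-term in \eqref{eq11}; the $\sqrt\sigma$-term is simply zero, so nothing cancels against it.

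The remaining cases are out of reach with either choice:
\begin{itemize}
\item For $\ell_p(\nu)=0$ or $\ell_{p-1}(\nu)=1$, the regular test form has size $\sqrt\lambda$ at $x=\delta_1$, and the logarithm of $H^{(1)}_0$ drops out of the Wronskian. You therefore get the stated powers of $\lambda$ but never the factor $1/|\log\lambda|$. Pairing against $Y_0$ or $H^{(1)}_0$ instead puts $|\log\lambda|$ into the numerator.
\item For $\ell_{p-1}(\nu)\in(0,1)$, your bound on the tangential part of $\tilde j_{2,\bar\lambda}(\Phi_\nu)$ at $x=\delta_1$ is still $O(\lambda^{\ell_{p-1}(\nu)-1/2})$. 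This gives exponents $-4$ and $-3$ instead of the $-3$ in \eqref{Abound3} and the $-2$ in \eqref{Amixed1low}.
\end{itemize}

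The missing idea is to exploit the \emph{largeness} of the outgoing Hankel mode, which is what the paper does, following Lemma~2.10 of \cite{OS}. On a slice $\delta_1<x<\epsilon_1$, the $\Phi_\nu$-component of $\Pi_pE_{\lambda,M}(\Phi_\sigma)$ equals the lower order $\tilde j$-component plus $\langle A_\lambda\Phi_\sigma,\Phi_\nu\rangle$ times the explicit Hankel mode from Definition~\ref{def:hankel-solutions} and \eqref{eq11}. The paper bounds this component by \eqref{grossboundsB}. It then divides by the size of the Hankel mode from \eqref{eqn:HankelSmallAsympt}, i.e.\ by $\Gamma(\ell)(\lambda/2x)^{-\ell}$, resp.\ $\frac{2}{\pi}|\log(\lambda/x)|$, used as a lower bound. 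This division directly produces the factors $\lambda^{\ell}\delta_1^{-\ell}/\Gamma(\ell)$ and the $1/|\log\lambda|$ gains. The change of dominant Hankel term for orders in $(0,1)$ gives the extra power.

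In your language, this amounts to pairing against the model solution whose tangential part vanishes at $x=\delta_1$. Then only the value of the $\Phi_\nu$-mode of $E_{\lambda,M}(\Phi_\sigma)$ enters, and $c(\lambda)$ factors as the Hankel mode at $\delta_1$ times the $x^2\partial_x$-derivative of $v_\nu$. Neither $\tilde j_{\bar\lambda}(\Phi_\nu)$ nor the singular solution has this property. With either choice your pairing bound is therefore strictly weaker than the division bound exactly where the Wronskian involves cancellation: the logarithmic cases and the regime $\ell_{p-1}(\nu)\in(0,1)$.
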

\begin{proof}
The proof follows by combining the previous Lemma \ref{grossboundsE} and the asymptotic \eqref{eqn:HankelSmallAsympt}, and it is similar to that of Lemma 2.10 in \cite{OS}. We start with the bound \eqref{grossboundsB} from Lemma \ref{grossboundsE}
\begin{align}
\Pi_pE_{\lambda,M}(\Phi_{\sigma})|_Z= O_{C^{\infty}(Z)}\left(\max\left\{\frac{\lambda^{\ell_{p}(\sigma)-\frac{3}{2}}}{\Gamma(\ell_{p}(\sigma)
+1)}, \frac{\lambda^{\ell_{p-1}(\sigma)-\frac{5}{2}} }{\Gamma(\ell_{p-1}(\sigma)+1)},\frac{\lambda^{\ell_{p-1}(\sigma)-\frac{5}{2}}}{\Gamma(\ell_{p-1}(\sigma))} \right\}  \right).
\end{align}
We then recall that 
\begin{align*}
E_{\lambda,M}(\Phi_{\sigma})|_{Z}=\tilde{j}_{\lambda}(\Phi_{\sigma})+\tilde{h}_{\lambda}^{(1)}(A_{\lambda}\Phi_{\sigma})+O(x^{\frac{n+1}{2}}).
\end{align*}
As in the proof of the previous lemma, the leading order terms in the expansion are from   $\tilde{h}^{(1)}(A_{\lambda}\Phi_{\sigma})$, because the $\tilde{j}_{\lambda}(\Phi_{\sigma})$ terms are all of lower order as $\lambda\rightarrow 0$. In the first case where $\Phi_{\sigma}$ and $\Phi_{\nu}$ are of the type $\varphi_{\sigma}^{(p)}$ and $\varphi_{\nu}^{(p)}$ respectively, with $\sigma, \nu \neq 0$, Definition \ref{def:hankel-solutions} and \eqref{eq11} give for $x$ with $\delta_1<|x|<\epsilon_1$   
\begin{align*}
&\left|\langle A_{\lambda}\Phi_{\sigma},\Phi_{\nu}\rangle\right| \; \left|\sqrt{\frac{\lambda}{x}}H^{(2)}_{\ell_{p}(\nu)}\left(\frac{\lambda}{x}\right)\right| \\
&=
O_{C^{\infty}(Z)}\left(\max\left\{\frac{\lambda^{\ell_{p}(\sigma)-\frac{3}{2}}}{\Gamma(\ell_{p}(\sigma)
+1)},   \frac{\lambda^{\ell_{p-1}(\sigma)-\frac{5}{2}} }{\Gamma(\ell_{p-1}(\sigma)+1)},\frac{\lambda^{\ell_{p-1}(\sigma)-\frac{5}{2}}}{\Gamma(\ell_{p-1}(\sigma))} \right\} \right),
\end{align*}
which for $\lambda$ near 0 by \eqref{eqn:HankelSmallAsympt} results in   
\begin{align}
&|\langle A_{\lambda}\Phi_{\sigma},\Phi_{\nu}\rangle|\\
& =O \left(\max\left\{\frac{\lambda^{\ell_{p}(\sigma)-\frac{3}{2}}}{\Gamma(\ell_{p}(\sigma)
+1)},   \frac{\lambda^{\ell_{p-1}(\sigma)-\frac{5}{2}} }{\Gamma(\ell_{p-1}(\sigma)+1)},\frac{\lambda^{\ell_{p-1}(\sigma)-\frac{5}{2}}}{\Gamma(\ell_{p-1}(\sigma))} \right\}  \frac{\lambda^{\ell_{p}(\nu)-\frac{1}{2}}}{\Gamma(\ell_{p}(\nu))} \langle \ell_p(\sigma)\rangle^{q}
\langle \ell_p(\nu)\rangle^{q}\right),
\end{align}
provided $\ell_p(\nu)\neq 0$. 
 If $\ell_p(\nu)=0$ then 
\begin{align}
|\langle A_{\lambda}\Phi_{\sigma},\Phi_{\nu}\rangle|& =O_q \left(\max\left\{\frac{\lambda^{\ell_{p}(\sigma)-\frac{3}{2}}}{\Gamma(\ell_{p}(\sigma)
+1)},   \frac{\lambda^{\ell_{p-1}(\sigma)-\frac{5}{2}} }{\Gamma(\ell_{p-1}(\sigma)+1)},\frac{\lambda^{\ell_{p-1}(\sigma)-\frac{5}{2}}}{\Gamma(\ell_{p-1}(\sigma))} \right\}  \frac{\lambda^{-\frac 12}}{|\log\lambda|} \langle \ell_p(\sigma)\rangle^{q}\right) .
\end{align} 
Otherwise  in the second case, Definition \ref{def:hankel-solutions} and \eqref{eq11} give 
\begin{align}\label{bad}
|\langle A_{\lambda}\Phi_{\sigma},\Phi_{\nu}\rangle| &\left| \sqrt{\frac{\lambda}{x}} H^{(2)}_{\ell_{p-1}(\nu)-1}\left(\frac{\lambda}{x}\right)   \right| \\
&=O_{C^{\infty}(Z)}\left(\max\left\{\frac{\lambda^{\ell_{p}(\sigma)-\frac{3}{2}}}{\Gamma(\ell_{p}(\sigma) \nonumber
+1)},   \frac{\lambda^{\ell_{p-1}(\sigma)-\frac{5}{2}} }{\Gamma(\ell_{p-1}(\sigma)+1)},\frac{\lambda^{\ell_{p-1}(\sigma)-\frac{5}{2}}}{\Gamma(\ell_{p-1}(\sigma))} \right\} \right),\nonumber
\end{align}
which gives, when $\ell_{p-1}(\nu)\neq 1$, $\ell_{p-1}(\nu)>1$
\begin{align}
&|\langle A_{\lambda}\Phi_{\sigma},\Phi_{\nu}\rangle|=O_q \left(\max\left\{\frac{\lambda^{\ell_{p}(\sigma)-\frac{3}{2}}}{\Gamma(\ell_{p}(\sigma)
+1)},   \frac{\lambda^{\ell_{p-1}(\sigma)-\frac{5}{2}} }{\Gamma(\ell_{p-1}(\sigma)+1)},\frac{\lambda^{\ell_{p-1}(\sigma)-\frac{5}{2}}}{\Gamma(\ell_{p-1}(\sigma))} \right\}  \frac{\lambda^{\ell_{p-1}(\nu)-\frac{3}{2}}}{\Gamma(\ell_{p-1}(\nu)-1)} \right)  \notag\\ 
\end{align}
and similarly for the other terms in the expansion, since this is the largest term in the expansion for small $|\lambda|$. For the terms in the region $\ell_{p-1}(\nu)\in (0,1)$ the order $\ell_{p-1}(\nu)-1\in(-1,0)$ is negative, so \eqref{eqn:HankelSmallAsympt} no longer governs the leading behaviour of $H^{(2)}_{\ell_{p-1}(\nu)-1}$. We therefore evaluate the derivative in the $B_{p+1}$ computation \eqref{eq11} using \eqref{derivativebesselpositive} in place of \eqref{derivativebessel}, so that the pairing is taken against $H^{(2)}_{\ell_{p-1}(\nu)}$, whose order $\ell_{p-1}(\nu)\in(0,1)$ is positive. Applying \eqref{eqn:HankelSmallAsympt} to this factor replaces $\Gamma(\ell_{p-1}(\nu)-1)$ by $\Gamma(\ell_{p-1}(\nu))$ and raises the power of $\lambda$ by one relative to \eqref{Abound2}, which gives \eqref{Abound3}. However, when $\ell_{p-1}(\nu)=1$ we have that 
\begin{align}
|\langle A_{\lambda}\Phi_{\sigma},\Phi_{\nu}\rangle|\left|\sqrt{\frac{\lambda}{x}} H_{\ell_{ p-1}(\nu)-1}^{(2)}\left(\frac{\lambda}{x}\right) \right|=O\left(\frac{1}{\Gamma(\ell_{p-1}(\sigma)
)}\lambda^{\ell_{p-1}(\sigma)-\frac{5}{2}}\langle \ell_p(\sigma)\rangle^{q}\right)
\end{align}
and therefore
\begin{align}
|\langle A_{\lambda}\Phi_{\sigma},\Phi_{\nu}\rangle|=O_q\left(\frac{1}{\Gamma(\ell_{p-1}(\sigma))|\log\lambda|
}\lambda^{\ell_{p-1}(\sigma)-3}\langle \ell_p(\sigma)\rangle^{q}\right).
\end{align}
Now we have that $\ell_{p-1}(\nu)=0$ if and only if $\nu=0$ and $n/2=p$. Then we have that 
\begin{align}
&B_{p+1}\left(\frac{1}{\lambda\sqrt{x}}H_0^{(1)}\left(\frac{\lambda}{x}\right) \left(\begin{matrix}0 \\ \xi_{\nu}^{(p)}\end{matrix}\right)\right)=\\&\left(\frac{n}{2}-p\right)\frac{\sqrt{x}}{\lambda}H_0^{(1)}\left(\frac{\lambda}{x}\right)\left(\begin{matrix}\xi_{\nu}^{(p)} \\ 0\end{matrix}\right)-\frac{1}{\sqrt{x}}H_{1}^{(1)}\left(\frac{\lambda}{x}\right)\left(\begin{matrix}\xi_{\nu}^{(p)} \\ 0\end{matrix}\right).
\end{align}
The first term is 0 and then the small argument expansion gives that 
$$-\frac{1}{\sqrt{x}}H_{1}^{(1)}\left(\frac{\lambda}{x}\right)=\frac{2i}{\pi}\frac{\sqrt{x}}{\lambda}+O\left(\lambda|\log\lambda|\right)$$
Using the equality  \eqref{bad}, this gives the desired result.
The cross terms follow similarly and their computation is omitted. 
\end{proof}
We remark here that in the case $\frac{n}{2}-p\leq 2$ the scattering amplitude singularity as $\lambda\rightarrow 0$ may cause resonance states. A similar remark was made in \cite{GS}. Therefore for Stone's theorem and its corollaries, we exclude $n=2p+j, j=0,1,2,3,4$ with $p\leq \frac{n}{2}$ which sit in this range.

A version of Stone's formula for differential forms is the following. Let $\{\Phi_{\rho}\}$ be an orthonormal basis of $L^2(N;\Lambda^pT^*N)$ consisting of the eigenforms of Lemma~\ref{basis}, indexed with multiplicity, and write $\Delta_N\Phi_{\rho}=\sigma_{\rho}\Phi_{\rho}$. 

\begin{theo}[Stone formula for co-closed $p$-forms, cf. \cite{OS}]\label{stone1}
Assume that $n\neq 2p+j, j=0,1,2,3,4$ with $p\leq \frac{n}{2}$. Let $\lambda>0$ and let $f\in C_0^\infty(M;\Lambda^pT^*M)$ satisfy $\delta f=0$, and write $E_\lambda=E_{\lambda,M}$ for the generalized eigenfunctions on $M$.  Then
\begin{equation}\label{eq:stone-resolvent-jump}
(R_\lambda-R_{-\lambda})f
=
\frac{\rmi}{2\lambda}
\sum_{\rho}E_\lambda(\Phi_\rho)
\langle f,E_\lambda(\Phi_\rho)\rangle_{L^2(M)} ,
\end{equation}
where the series converges in $C^\infty_{\loc}(M;\Lambda^pT^*M)$.  Here $R_\lambda=(\Delta_p-\lambda^2)^{-1}$ denotes the outgoing resolvent and $R_{-\lambda}$ denotes the incoming resolvent.

Equivalently, if $dB_\lambda$ denotes the absolutely continuous spectral measure of $\Delta_p^{1/2}$ on $(0,\infty)$, then for all $f,g\in C_0^\infty(M;\Lambda^pT^*M)$ with $\delta f=0,\delta g=0$ we have that 
\begin{equation}\label{eq:stone-spectral-measure}
\langle dB_\lambda f,g\rangle
=
\frac{1}{2\pi}\chi_{[0,\infty)}(\lambda)
\sum_\rho
\langle f,E_\lambda(\Phi_\rho)\rangle_{L^2(M)}
\langle E_\lambda(\Phi_\rho),g\rangle_{L^2(M)}\,\der\lambda .
\end{equation}
Consequently, for every bounded Borel function $k:\mathbb R\to\mathbb C$,
\begin{align}\label{StoneFull}
\langle k(\Delta_p)f,g\rangle
&=
 k(0)\sum_{j=1}^N\langle f,u_j\rangle_{L^2(M)}\langle u_j,g\rangle_{L^2(M)} \nonumber \\
&\quad+
\frac{1}{2\pi}\sum_\rho\int_0^\infty
k(\lambda^2)
\langle f,E_\lambda(\Phi_\rho)\rangle_{L^2(M)}
\langle E_\lambda(\Phi_\rho),g\rangle_{L^2(M)}\,\der\lambda .
\end{align}
Here $u_1,\ldots,u_N$ are an $L^2$-orthonormal basis for $\ker\Delta_{p}=\mathcal H^p(M)$.
\end{theo}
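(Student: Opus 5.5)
The plan is to follow the scalar argument of \cite{OS} and adapt each step to co-closed $p$-forms. There are three pieces: prove the resolvent jump formula \eqref{eq:stone-resolvent-jump}, turn it into the spectral measure identity \eqref{eq:stone-spectral-measure} via the classical Stone formula, and then assemble \eqref{StoneFull} by adding the point spectrum at zero.

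\textbf{Step 1: the resolvent jump.} Fix $\lambda>0$ and a co-closed $f\in C_0^\infty$. Set $u=(R_\lambda-R_{-\lambda})f$. Then $(\Delta_p-\lambda^2)u=0$, and $\delta u=0$, because $\delta$ commutes with both resolvents and $\delta f=0$, exactly as in the co-closedness argument after the definition of $E_{\lambda,M}$. By the limiting absorption principle (Theorem~\ref{localresolvent}) and Proposition~14 of \cite{melbook2}, $R_{\pm\lambda}f$ is outgoing/incoming near $x=0$, of the form $e^{\pm i\lambda/x}x^{\frac{n-1}{2}}(\text{smooth})$. Hence $u$ is a generalized eigenform with both incoming and outgoing parts.

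Next I would identify $u$ by pairing. For each $\rho$, apply Green's identity \eqref{greenuni} on $M_\epsilon$ to $R_\lambda f$ and $E_\lambda(\Phi_\rho)$. Then use the expansion \eqref{expansion}, together with the boundary pairings \eqref{thirdbi}, \eqref{secondbi} and \eqref{firstbi}, to compute $\lim_{\epsilon\to0}b_\epsilon$. The outgoing--outgoing cross terms oscillate like $e^{2i\lambda/x}$ and drop out. This yields
\[
\langle f,E_\lambda(\Phi_\rho)\rangle=-2i\lambda\,\langle \text{incoming coefficient of } u,\ C_-\Phi_\rho\rangle_{L^2(N)},
\]
with the constants to be fixed by the normalization \eqref{diracpoisson}.

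Uniqueness then gives the formula. By Proposition~\ref{prop:no-positive-eigenforms}, a co-closed generalized eigenform of this type with zero incoming data vanishes. Therefore $u=\sum_\rho a_\rho E_\lambda(\Phi_\rho)$, where $a_\rho$ are the incoming coefficients, and unitarity of $S^{\mathrm{abs}}(\lambda)$ (Proposition~\ref{uniS}) makes the outgoing parts consistent. Solving for $a_\rho$ gives $a_\rho=\frac{i}{2\lambda}\langle f,E_\lambda(\Phi_\rho)\rangle$.

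\textbf{Step 2: convergence of the series.} I expect this to be the main obstacle. Convergence in $C^\infty_{\loc}$ should come from Lemma~\ref{grossboundsE} and Lemma~\ref{scatteringAbounds}. The Bessel-type bounds give decay of order $\delta_1^{-\ell}\lambda^{\ell}/\Gamma(\ell+1)$ in $\ell_p(\sigma_\rho)$, which is superexponential. Weyl's law on $N$ controls the multiplicities polynomially. Since $f$ is compactly supported, the coefficients $\langle f,E_\lambda(\Phi_\rho)\rangle$ decay rapidly in $\rho$ because they are $L^2$ pairings against the same bounded families. The $\xi$-type terms lose up to $\lambda^{-4}$, and this is exactly where the exclusion $n\neq 2p+j$, $j\le4$, is needed. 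Uniformity of these bounds for $\lambda$ on compact subsets of $(0,\infty)$ must be checked. I would also handle the $\sigma=0$ harmonic components separately, using both closed and co-closed copies as in Lemma~\ref{basis}.

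\textbf{Step 3: the spectral measure and the functional calculus.} By the classical Stone formula,
\[
dE_{\lambda^2}=\frac{1}{2\pi i}\bigl(R(\lambda^2+i0)-R(\lambda^2-i0)\bigr)\,d(\lambda^2).
\]
Here $R(\lambda^2\pm i0)$ correspond to $R_{\pm\lambda}$. Since $d(\lambda^2)=2\lambda\,d\lambda$, the factor $\frac{i}{2\lambda}$ becomes $\frac{1}{2\pi}$, which gives \eqref{eq:stone-spectral-measure}.

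The restriction to co-closed $f,g$ is preserved because $\overline{\delta C_0^\infty}\oplus\mathcal H^p$ is invariant under $\Delta_p$. There is no singular continuous spectrum and no positive point spectrum, by Proposition~\ref{prop:no-positive-eigenforms} and Theorem~\ref{localresolvent}. The only other contribution is the kernel at $0$. It is finite-dimensional, spanned by the $u_j$, and produces the $k(0)$ term.

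Finally, the low-energy bounds of Lemma~\ref{scatteringAbounds}, under the dimensional exclusion, ensure there is no singular contribution of the measure at $\lambda=0$ beyond the $L^2$ kernel. Integrating $k(\lambda^2)$ against the measure then yields \eqref{StoneFull} for bounded Borel $k$.
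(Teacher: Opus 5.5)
Your overall plan matches the paper's: show the jump $u=(R_\lambda-R_{-\lambda})f$ is a co-closed generalized eigenform, identify it with $E_\lambda(\Psi)$ via a boundary pairing plus uniqueness, and apply the classical Stone formula with $\der(\lambda^2)=2\lambda\,\der\lambda$. However, the coefficient identification in Step 1 does not follow from the computation you describe.

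The pairing in \eqref{greenuni} is sesquilinear. For two outgoing terms the phases cancel, $e^{\rmi\lambda/x}\,\overline{e^{\rmi\lambda/x}}=1$, and this is exactly the term that survives as $\epsilon\to0$ (compare \eqref{thirdbi} and \eqref{eq:flux-limit}). It is the outgoing--incoming products that carry $e^{2\rmi\lambda/x}$ and drop out. Since $R_\lambda f$ is purely outgoing, Green's identity for $R_\lambda f$ and $E_\lambda(\Phi_\rho)$ expresses $\langle f,E_\lambda(\Phi_\rho)\rangle$ through $\langle a_+,C_+S^{\mathrm{abs}}(\lambda)\Phi_\rho\rangle_{L^2(N)}$, where $a_+$ is the outgoing coefficient of $R_\lambda f$. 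It never sees the incoming coefficient of $u$, which comes entirely from $-R_{-\lambda}f$.

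The paper instead pairs $R_{-\lambda}f$ with $E_\lambda(\Phi_\rho)$. Then the incoming--incoming term survives and gives \eqref{eq:coefficient-identity} directly, with no use of unitarity. Your choice can be repaired only by reordering. First prove $u=E_\lambda(\Psi)$, so that $a_+=C_+S^{\mathrm{abs}}(\lambda)\Psi$. Then use unitarity of $C_+$ and $S^{\mathrm{abs}}(\lambda)$ (Proposition~\ref{uniS}) to rewrite $\langle C_+S^{\mathrm{abs}}(\lambda)\Psi,C_+S^{\mathrm{abs}}(\lambda)\Phi_\rho\rangle$ as $\langle\Psi,\Phi_\rho\rangle$. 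In that version unitarity does the essential work; it is not just a consistency check on the outgoing parts.

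The uniqueness step also has a gap. Proposition~\ref{prop:no-positive-eigenforms} concerns eigenforms in $\operatorname{dom}(\Delta_p)\subset L^2$. A solution $w$ with vanishing incoming data has $\Pi_p w=e^{\rmi\lambda/x}a+O(x)$, which is not square integrable against $x^{-2}\vol_N\,\der x$ unless $a=0$. You need the flux argument first:
\begin{itemize}
\item[(i)] Green's identity gives $b_\epsilon(w,w)=0$ because $\lambda^2$ is real.
\item[(ii)] The mode-by-mode computation gives $b_\epsilon(w,w)=2\rmi\lambda\|a\|^2_{L^2(N)}+O(\epsilon)$, as in \eqref{eq:flux-limit}.
\item[(iii)] Hence $a=0$, the remainder is in $L^2$, and elliptic regularity places it in $\operatorname{dom}(\Delta_p)$.
\end{itemize}
Only then does Proposition~\ref{prop:no-positive-eigenforms} give $w=0$. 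This is how the paper obtains \eqref{eq:jump-is-poisson}.

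Two smaller points in Step 2. The rapid decay of $\langle f,E_\lambda(\Phi_\rho)\rangle$ in $\rho$ does not follow from pairing against a bounded family. It follows from \eqref{eq:coefficient-identity} and the smoothness of $\Psi$, which is the paper's argument. Also, the exclusion $n\neq 2p+j$ plays no role at fixed $\lambda>0$; it only matters for the integrand of \eqref{StoneFull} as $\lambda\to0$.
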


\begin{proof}
The proof is the standard Stone argument, with the normalisation fixed by the radiation coefficients of the generalized eigenfunctions.  Let $f\in C_0^\infty(M;\Lambda^pT^*M)$ with $\delta f=0$ and fix $\lambda>0$. The difference of the resolvents exists  because of the limiting absorption principle, Theorem \ref{localresolvent}.  The outgoing and incoming boundary values satisfy
\begin{equation*}
(\Delta_p-\lambda^2)(R_\lambda-R_{-\lambda})f=0 .
\end{equation*}
Moreover $R_{-\lambda}f$ has incoming leading term at infinity.  Thus, for some
$\Psi\in C^\infty(N;\Lambda^pT^*N)$,
\begin{equation}\label{eq:incoming-leading-term}
\Pi_p R_{-\lambda}f
=
-e^{-\rmi\lambda/x}\Psi+O(x)
\qquad \text{as} \quad x\to 0,
\end{equation}
where we have first projected onto the continuous spectrum of $\Delta_p$; and hence
\begin{equation}\label{eq:jump-asymptotics}
\Pi_p(R_\lambda-R_{-\lambda})f
=
 e^{-\rmi\lambda/x}\Psi
 +e^{\rmi\lambda/x}\widetilde\Psi
 +O(x)
\end{equation}
for some $\widetilde\Psi\in C^\infty(N;\Lambda^pT^*N)$.  By the definition of the Poisson operator, $E_\lambda(\Psi)$ is the unique generalized eigenfunction with incoming coefficient $\Psi$.  Indeed $\delta f=0$ forces $\delta E_\lambda(\Psi)=0$, so only co-closed solutions occur and the incoming coefficient determines the eigenfunction uniquely.  We recall the uniqueness argument, since this is the point at which the radiation condition is used.

Set
\begin{equation*}   
w=(R_\lambda-R_{-\lambda})f-E_\lambda(\Psi).
\end{equation*}
Then $(\Delta_p-\lambda^2)w=0$, $w$ satisfies the outgoing radiation condition, and the incoming coefficient of $w$ vanishes.  Thus $w$ is purely outgoing at infinity.  Let $M_\epsilon=\{x\geq\epsilon\}$.  Green's identity for differential forms gives\begin{equation}\label{eq:green-flux} 
0
=
\langle (\Delta_p-\lambda^2)w,w\rangle_{M_\epsilon}
-
\langle w,(\Delta_p-\lambda^2)w\rangle_{M_\epsilon}
=
b_{\epsilon}(w,w),
\end{equation}
where $b_{\epsilon}(w,w)$ is the boundary pairing in \eqref{bpairing}. If the outgoing coefficient of $w$ is denoted by $a\in C^\infty(N;\Lambda^pT^*N)$, then evaluating the boundary pairing mode by mode and the previous computations of the boundary pairings for the generalized eigenfunctions gives
\begin{equation}\label{eq:flux-limit}
b_{\epsilon}(w,w)
=
2\rmi\lambda\|a\|_{L^2(N)}^2+O(\epsilon)
\qquad \epsilon\downarrow0 .
\end{equation}
Letting $\epsilon\downarrow0$ in \eqref{eq:green-flux} gives $a=0$.  Hence $w$ has no radiating leading term. The remainder is therefore in $L^2$, and elliptic regularity places it in $\operatorname{dom}(\Delta_p)$.  Proposition~\ref{prop:no-positive-eigenforms}, applied with eigenvalue $\lambda^2>0$, now implies $w\equiv0$. Therefore we have that
\begin{equation}\label{eq:jump-is-poisson}
(R_\lambda-R_{-\lambda})f=E_\lambda(\Psi).
\end{equation}

It remains to compute the coefficient $\Psi$.  Pair the equation
$(\Delta_p-\lambda^2)R_{-\lambda}f=f$ with $E_\lambda(\Phi_\rho)$ and integrate by parts over $M_\epsilon$.  Since $(\Delta_p-\lambda^2)E_\lambda(\Phi_\rho)=0$, the interior terms cancel and only the boundary pairing at $x=\epsilon$ remains.  Using the incoming/outgoing asymptotics of $R_{-\lambda}f$ and $E_\lambda(\Phi_\rho)$ gives
\begin{equation}\label{eq:coefficient-identity}
\langle f,E_\lambda(\Phi_\rho)\rangle_{L^2(M)}
=
-2\rmi\lambda\,\langle \Psi,\Phi_\rho\rangle_{L^2(N)} .
\end{equation}
Since $(\Phi_\rho)_\rho$ is an orthonormal basis of $L^2(N;\Lambda^pT^*N)$, we obtain
\begin{equation}\label{eq:Psi-expansion}
\Psi
=
\frac{\rmi}{2\lambda}
\sum_\rho
\langle f,E_\lambda(\Phi_\rho)\rangle_{L^2(M)}\Phi_\rho .
\end{equation}
The sum converges in $C^\infty(N;\Lambda^pT^*N)$ because $\Psi$ is smooth and the $\Phi_\rho$ form the smooth spectral basis from Lemma \ref{basis}.  Applying the continuous map
$\Phi\mapsto E_\lambda(\Phi)$ gives convergence locally in $C^\infty(M;\Lambda^pT^*M)$ using the bound \eqref{grossboundsJ} and Lemma~\ref{scatteringAbounds} (the bounds \eqref{Abound1}--\eqref{Abound3}), and \eqref{eq:stone-resolvent-jump} follows from \eqref{eq:jump-is-poisson} and \eqref{eq:Psi-expansion}.

Finally, Stone's formula for the self-adjoint operator $\Delta_{p}$ gives 
\begin{equation*}
dE_{\Delta_p}(\lambda^2)
=
\frac{1}{2\pi\rmi}(R_\lambda-R_{-\lambda})\,\der(\lambda^2).
\end{equation*}
Equivalently, for the spectral measure $dB_\lambda$ of $\Delta_p^{1/2}$,
\begin{equation*}
dB_\lambda
=
dE_{\Delta_p}(\lambda^2)
=
\frac{1}{2\pi\rmi}(R_\lambda-R_{-\lambda})\,\der(\lambda^2)
=
\frac{\lambda}{\pi\rmi}(R_\lambda-R_{-\lambda})\,\der\lambda .
\end{equation*}
Substituting \eqref{eq:stone-resolvent-jump} yields \eqref{eq:stone-spectral-measure}.  The functional calculus identity \eqref{StoneFull} follows by integrating $k(\lambda^2)$ against the absolutely continuous part of $dB_\lambda$ and adding the pure point contribution at zero, namely the orthogonal projection onto $\ker\Delta_{p}$. 
\end{proof}

The preceding Stone formula immediately gives the following kernel expansion for rapidly decaying functions of the Laplacian.  We state the result needed below on the scattering manifold.

\begin{theo}[Kernel expansion for rapidly decaying functions, cf. \cite{OS}]\label{stone2}
Let $k:[0,\infty)\to\mathbb C$ be a bounded Borel function such that, for every $q\in\mathbb N$,
\begin{equation}\label{eq:k-rapid-decay}
 |k(w)|\leq C_q(1+w)^{-q},\qquad w\geq0 .
\end{equation}
Then $k(\Delta_{p})|_{\mathrm{ker}\delta}$ is a smoothing operator.  More precisely, it has a smooth Schwartz kernel
\begin{equation*}
\kappa_k\in C^\infty\!\big(M\times M;\Lambda^pT^*M\boxtimes (\Lambda^pT^*M)^*\big),
\end{equation*}
and, for $x,y\in M$,
\begin{equation}\label{eq:kernel-expansion-k}
\kappa_k(x,y)
=
 k(0)\sum_{j=1}^N u_j(x)\otimes u_j^*(y)
+
\frac{1}{2\pi}\sum_\rho\int_0^\infty
 k(\lambda^2)
E_\lambda(\Phi_\rho)(x)\otimes E_\lambda(\Phi_\rho)^*(y)\,\der\lambda .
\end{equation}
The series and integral in \eqref{eq:kernel-expansion-k} converge in
$C^\infty(K\times K;\Lambda^pT^*M\boxtimes (\Lambda^pT^*M)^*)$ for every compact set $K\Subset M$.  Here $u_1,\ldots,u_N$ are the orthonormal basis of $\ker\Delta_{p}$ appearing in Theorem~\ref{stone1}.  
\end{theo}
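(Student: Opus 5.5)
The kernel expansion will follow from Theorem~\ref{stone1} by approximating $k$ and controlling the convergence through the low- and high-energy bounds on $E_\lambda(\Phi_\rho)$. Throughout I work under the standing hypothesis of Theorem~\ref{stone1}, namely $n\neq 2p+j$ for $j=0,\dots,4$, which makes the low-energy singularities integrable.

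\textbf{Step 1: smoothing.} Since $k$ decays faster than any power, $k(\Delta_p)=(1+\Delta_p)^{-m}\,k_m(\Delta_p)\,(1+\Delta_p)^{-m}$ with $k_m(w)=(1+w)^{2m}k(w)$ bounded. By elliptic regularity (Sobolev mapping of $(1+\Delta_p)^{-m}$ on the complete manifold), $k(\Delta_p)$ maps $H^{-2m}_{\comp}\to H^{2m}_{\loc}$ for every $m$. Restricting to $\ker\delta$ is harmless, because $k(\Delta_p)$ commutes with $\delta$ and hence preserves co-closed forms. It follows that $\kappa_k$ is smooth on $K\times K$ for every compact $K$.

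\textbf{Step 2: identifying the kernel.} Apply \eqref{StoneFull} with $f,g\in C_0^\infty$ co-closed and supported in $K$. The right-hand side of \eqref{eq:kernel-expansion-k}, paired against $f\otimes\bar g$, is exactly the right-hand side of \eqref{StoneFull}. So it suffices to show that the series and integral in \eqref{eq:kernel-expansion-k} converge in $C^\infty(K\times K)$; uniqueness of distributional kernels then identifies the limit with $\kappa_k$ on $\ker\delta$.

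\textbf{Step 3: convergence, split by energy.} I split $\int_0^\infty$ into the ranges $[0,\lambda_0]$ and $[\lambda_0,\infty)$.

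For $\lambda\geq\lambda_0$, I use the polynomial bounds on $C^q(K)$ seminorms of $E_\lambda(\Phi_\rho)$. These come from $\Delta_p E_\lambda=\lambda^2E_\lambda$ and the $H^s_{\comp}\to H^{s+2}_{\loc}$ resolvent bounds: the limiting absorption bound of Theorem~\ref{localresolvent} controls $R_\lambda(\Delta-\lambda^2)(\chi\tilde j_\lambda)$ polynomially in $\lambda$. For the sum over $\rho$, I use that $\tilde j_\lambda(\Phi_\rho)$ on a compact set decays like $(\lambda/x)^{\ell}/\Gamma(\ell+1)$ when $\ell(\sigma_\rho)\gg\lambda/x$. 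This is the bound \eqref{grossboundsJ} extended uniformly for $|\lambda|<D$, with $D$ cut off at a scale tied to $\ell$. Combined with Weyl's law on $N$, the $\rho$-sum then converges for each bounded range of $\lambda$. The rapid decay of $k(\lambda^2)$ beats the polynomial growth in $\lambda$, and in $\rho$ for $\ell_\rho\lesssim\lambda$.

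For $\lambda\leq\lambda_0$, I use Lemma~\ref{grossboundsE} together with Lemma~\ref{scatteringAbounds} to bound $E_\lambda(\Phi_\rho)$ on $K$. The factors $\delta_1^{-\ell}/\Gamma(\ell)$ are summable in $\rho$ against the polynomial growth, and the exclusion $n\neq 2p+j$ makes the worst powers $\lambda^{\ell_{p-1}-5/2}$ squared still integrable near $0$ (this needs $\ell_{p-1}(0)=|n/2-p|>2$).

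\textbf{Main obstacle.} The hard step is making the high-energy part uniform in both $\lambda$ and $\rho$: the estimates of Lemma~\ref{grossboundsE} are only stated for $|\lambda|<D$. I would first try to prove a uniform bound $\|E_\lambda(\Phi_\rho)\|_{C^q(K)}\le C_q\langle\lambda\rangle^{N_q}\langle\sigma_\rho\rangle^{N_q}$ directly via duality, bounding instead $\sum_\rho|\langle f,E_\lambda(\Phi_\rho)\rangle|^2$. By \eqref{eq:stone-resolvent-jump} this sum equals $\frac{2\lambda}{i}\langle(R_\lambda-R_{-\lambda})f,f\rangle$, which is polynomially bounded by limiting absorption. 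Applying this with $f$ replaced by derivatives of delta-approximations in $K$, i.e.\ in $H^{-s}_{\comp}$, gives the needed $C^\infty(K\times K)$ convergence of the diagonal sums. Cauchy--Schwarz then handles the off-diagonal terms.
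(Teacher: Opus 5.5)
\textbf{There is a genuine gap in Step 3.} Your Steps 1 and 2 match the paper. Step 3, however, carries the entire convergence claim, and it rests on an estimate that is not available. Both your direct high-energy route and your duality fallback need a bound, at each fixed $\lambda\ge\lambda_0$, that grows at most polynomially in $\lambda$. This is either a bound on $\|E_\lambda(\Phi_\rho)\|_{C^q(K)}$, or a bound on $\langle (R_\lambda-R_{-\lambda})f,f\rangle$ for $f\in H^{-s}_{\comp}$.

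Theorem~\ref{localresolvent} does not supply this. It gives only continuity $L^2_{\comp}\to L^2_{\loc}$ at each fixed $\lambda$, with no control of the constant in $\lambda$ and nothing for distributional data. Moreover, no non-trapping hypothesis is imposed. With trapping in the compact part (e.g.\ elliptic closed geodesics), the cut-off resolvent on the real axis can grow faster than any polynomial, indeed exponentially. The localized spectral density, i.e.\ $\sum_\rho|\langle f,E_\lambda(\Phi_\rho)\rangle|^2$ for point-evaluation-type $f$, can then have very tall narrow peaks near resonances close to the real axis. So a $k(\lambda^2)$ that decays rapidly but not exponentially need not dominate your integrand pointwise in $\lambda$, and ``rapid decay beats polynomial growth'' is not justified.

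There are also secondary problems. For $M$ rather than $\widetilde M$, the source $(\Delta_M-\lambda^2)(\chi\tilde j_\lambda)$ is not compactly supported, so Theorem~\ref{localresolvent} does not even apply as stated. Your low-energy argument via Lemmas~\ref{grossboundsE} and \ref{scatteringAbounds} is per-mode and lives on $Z$, not on an arbitrary compact $K$. As explained next, none of this analysis is actually needed.

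\textbf{The missing idea: integrate in $\lambda$ first and use positivity.} This is what the paper does.
\begin{itemize}
\item Split $k$ into four nonnegative parts; each is still rapidly decaying.
\item Each $\rho$-term in \eqref{StoneFull} is then a nonnegative form. Hence the truncation $\kappa_m$ to $\{\rho:\ell_p(\sigma_\rho)\le m\}$ satisfies $0\le\kappa_m(\Delta_p)\le k(\Delta_p)$. The same holds with $k$ replaced by $(1+w)^{2r}k$.
\item Cauchy--Schwarz for the nonnegative form gives
$|\langle\kappa_m(\Delta_p)\chi_1v,\chi_2w\rangle|\le\langle k(\Delta_p)\chi_1v,\chi_1v\rangle^{1/2}\langle k(\Delta_p)\chi_2w,\chi_2w\rangle^{1/2}\le C_s\|\chi_1v\|_{H^{-s}}\|\chi_2w\|_{H^{-s}}$, uniformly in $m$. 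Only your Step~1 estimate enters here.
\item This yields uniform $C^r(K\times K)$ bounds on the truncated kernels for every $r$.
\item Arzel\`a--Ascoli, together with the weak convergence furnished by \eqref{StoneFull}, identifies every subsequential limit with $\kappa_k$. So the whole sequence converges in $C^\infty(K\times K)$.
\end{itemize}

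Your duality-plus-Cauchy--Schwarz instinct is the right one. It just has to be applied to the integrated quantity
$\int_0^\infty k(\lambda^2)\sum_\rho|\langle f,E_\lambda(\Phi_\rho)\rangle|^2\,\der\lambda=2\pi\langle k(\Delta_p)P_{ac}f,f\rangle$,
not at fixed $\lambda$. Done this way, the low-energy lemmas, Weyl's law, and any high-energy resolvent bounds become unnecessary.
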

\begin{proof} The proof is nearly the same as the one in \cite{OS} for obstacle scattering in the exterior of a compact set in Euclidean space. 
Using  functional calculus we have that $(1+\Delta_p)^{s_1}k(\Delta_p)(1+\Delta_p)^{s_2}$ is bounded as an operator in $L^2(M;\Lambda^pT^*M)$ for all $s_1,s_2\in\mathbb{R}$. Therefore $k(\Delta_p)$ continuously maps $H^s_{\comp}(M;\Lambda^pT^*M)$ to $H^{s+q}_{\loc}(M;\Lambda^pT^*M)$ for all $s\in \mathbb{R}^+_0$ and $q\in\mathbb{R}^+_0$ and has a smooth integral kernel, $\kappa$. Denote by $\kappa_m(\Delta_p)$ the approximation of $k(\Delta_p)$ which is defined by truncating the infinite sum (we write $m$ for the truncation parameter to avoid conflict with the dimension $n$). Since $k$ is Borel it suffices to consider the positive and negative parts separately. That is we have that 
\begin{align}
\langle P_{ac}\kappa_m(\Delta_p)f,g\rangle =\frac{1}{2\pi}\sum\limits_{\rho, \ell_p(\sigma_\rho)\leq m}\int\limits_0^{\infty}k(\lambda^2)\langle f,E_{\lambda}(\Phi_{\rho})\rangle \langle E_{\lambda}(\Phi_{\rho}),g\rangle \,d\lambda.
\end{align} 
Then to show the statement it is sufficient to show this for positive $k$. Therefore we have that in the sense of operators
\begin{align}
0\leq(1+ \Delta_{p})^{s_1}\kappa_m(\Delta_{p})(1+\Delta_{p})^{s_2}\leq(1+ \Delta_{p})^{s_1}\kappa(\Delta_{p})(1+\Delta_{p})^{s_2}.
\end{align}
For any $\chi_1,\chi_2\in C_0^{\infty}(M)$ we then obtain the estimate 
\begin{align}
&|\langle \kappa_m(\Delta_{p})(\chi_1v),\chi_2w\rangle|\leq |\langle k(\Delta_{p})(\chi_1v),\chi_1v\rangle|^{\frac{1}{2}} |\langle k(\Delta_{p})(\chi_2w),\chi_2w\rangle|^{\frac{1}{2}}\nonumber \leq \\& C_s\|\chi_1v\|_{H^{-s}}\|\chi_2w\|_{H^{-s}}.
\end{align}
Therefore $\kappa_m(\Delta_{p})$ has a smooth integral kernel. Since $\Delta_p E_{\lambda}(\Phi_\rho)=\lambda^2 E_{\lambda}(\Phi_\rho)$, applying $(1+\Delta_p)^r$ in each of the two kernel variables replaces $k$ by $(1+w)^{2r}k$, which again satisfies \eqref{eq:k-rapid-decay}; running the estimate above with this symbol shows that the same bounds hold, uniformly in $m$, for every $C^r(K\times K)$-seminorm. Hence the sequence $\kappa_{m}$ is bounded and equicontinuous in $C^{\infty}(K\times K; \Lambda^pT^*M\boxtimes(\Lambda^pT^*M)^*)$ on every compact $K\Subset M$, and by the Arzel\`a--Ascoli theorem it has a subsequence converging in $C^{\infty}(K\times K)$. Because the set $K$ is compact and the limit is unique, the full sequence $\kappa_{m}$ converges to $\kappa$ in $C^{\infty}(K\times K; \Lambda^pT^*M\boxtimes(\Lambda^pT^*M)^*)$.
\end{proof} 

\subsection*{Acknowledgements:} The authors wish to thank Alexander Strohmaier for pointing us in the direction of this problem and for useful discussions. The second author would also like to thank the organizers and participants of the workshop on Analytic and Geometric Aspects of Spectral Theory at BIRS-CMO for introducing her to the area. 

\section{Appendix}

\subsection{Incoming and outgoing waves}
Let $f\in C^{\infty}(M;\Lambda^pT^*M)$ with $\lambda\in \mathbb{R}\setminus \{0\}$, and suppose $(\Delta_M-\lambda^2)f$ is compactly supported.  We say $f$ is \textit{outgoing} for $\lambda$ if $f=R_{\lambda}h$ for some compactly supported $h$, and \textit{incoming} if it is outgoing for $-\lambda$.  Since the condition depends only on the behaviour of $f$ at infinity, $f$ is outgoing on $M$ if and only if $f|_{M\setminus K}$ is outgoing.  An outgoing form has the asymptotic expansion
\begin{align}
\Pi_pf=e^{i\frac{\lambda}{x}}\Phi\,(1+O(x)) \qquad x\rightarrow 0,
\end{align}
where $\Phi\in C^{\infty}(N;\Lambda^pT^*N)$. It follows that this expansion can be differentiated term by term.

\subsection{Bessel functions}
We recall for completeness that the Bessel function $J_{\nu}(z)$ satisfies
\begin{align}\label{besseleq}
z^{2}\frac{{\mathrm{d}}^{2}w}{{\mathrm{d}z}^{2}}+z\frac{\mathrm{d}w}{\mathrm{d%
}z}+(z^{2}-\nu^{2})w=0;
\end{align}
see \cite{olverbook} for a comprehensive treatment. The differentiation formulas are
\begin{align}\label{derivativebessel}
\frac{d}{dz}J_{\nu}(z)=J_{\nu-1}(z)-\frac{\nu}{z}J_{\nu}(z),
\end{align}
or 
\begin{align}\label{derivativebesselpositive}
\frac{d}{dz}J_{\nu}(z)=-J_{\nu+1}(z)+\frac{\nu}{z}J_{\nu}(z),
\end{align}
\begin{align}\label{derivativebessel2}
\frac{d^2}{dz^2}J_{\nu}(z)=J_{\nu-2}(z)-\frac{2\nu-1}{z}J_{\nu-1}(z)+\frac{\nu+\nu^2}{z^2}J_{\nu}(z),
\end{align}
\begin{align}\label{derivativebessel3}
\frac{d^3}{dz^3}J_{\nu}(z)=J_{\nu-3}(z)-\frac{3\nu-3}{z}J_{\nu-2}(z)+\frac{3\nu^2}{z^2}J_{\nu-1}(z) -\frac{2 \nu+ 3\nu^2+\nu^3}{z^3}J_{\nu}(z),
\end{align}
and
\begin{align}
J_{\nu-1}\left(z\right)+J_{\nu+1}\left(z\right)=(2\nu/z)%
J_{\nu}\left(z\right).
\end{align}
The same relations hold for $H^{(1)}_{\nu}(z)$ and $H^{(2)}_{\nu}(z)$. The Bessel and Hankel functions are related by
\begin{align} \label{hankel}
2J_{\nu}(z)=H^{(1)}_{\nu}(z)+H^{(2)}_{\nu}(z).
\end{align}
Solutions to \eqref{besseleq} are collectively called cylinder functions and denoted $\mathcal{C}(z)$. The large-argument expansion uses the coefficients
\[a_{k}(\nu)=\frac{(4\nu^{2}-1^{2})(4\nu^{2}-3^{2})\!\cdots\!(4\nu^{2}-(2k-1)^{2})}{%
k!\,8^{k}},\]
where $a_0(\nu)=1$. From the NIST Digital Library of Mathematical Functions \cite{NIST}, eqs.~10.17.13--15, we have that
\begin{align}\label{eqn:Hankelerror}
{H^{1}_{\nu}}\left(z\right)=\left(\frac{2}{\pi z}\right)^{\frac{1}{2}}e^{%
i\omega}\left(\sum_{k=0}^{\ell-1}(i)^{k}\frac{a_{k}(\nu)}{z^{k}}+R_{%
\ell}^{+}(\nu,z)\right)
\end{align}
where $\ell \in \N$, $\omega=z-\tfrac{1}{2}\nu\pi-\tfrac{1}{4}\pi$ and
\[\left|R_{\ell}^{+}(\nu,z)\right|\leq 2|a_{\ell}(\nu)|\mathcal{V}_{z,\pm i%
\infty}\left(t^{-\ell}\right)\*\exp\left(|\nu^{2}-\tfrac{1}{4}|\mathcal{V}_{z,%
\pm i\infty}\left(t^{-1}\right)\right),\]
where $\mathcal{V}_{z,i\infty}\left(t^{-\ell}\right)$ may be estimated in various sectors as follows
\[\mathcal{V}_{z,i\infty}\left(t^{-\ell}\right)\leq\begin{cases}|z|^{-\ell},&0%
\leq\operatorname{ph}z\leq\pi,\\
\chi(\ell)|z|^{-\ell},&\parbox[t]{224.037pt}{$-\tfrac{1}{2}\pi\leq%
\operatorname{ph}z\leq 0$ or
$\pi\leq\operatorname{ph}z\leq\tfrac{3}{2}\pi$,}\\
2\chi(\ell)|\Im z|^{-\ell},&\parbox[t]{224.037pt}{$-\pi<\operatorname{ph}z\leq%
-\tfrac{1}{2}\pi$ or
$\tfrac{3}{2}\pi\leq\operatorname{ph}z<2\pi$.}\end{cases}\]
Here, $\chi(\ell)$ is defined by
\begin{align*}
 \chi(x) := \pi^{1/2} \Gamma\left(\tfrac{1}{2}x+1\right)/\Gamma \left(\tfrac{1}{2}x+\tfrac{1}{2}\right).
\end{align*}
For $z \to 0$, we have the estimate \cite{NIST} equations 10.7.7 and 10.7.2
\begin{align}\label{eqn:HankelSmallAsympt}
 {H^{(1)}_{\nu}}\left(z\right) &\sim
 \begin{cases}
  -(i/\pi) \Gamma\left(\nu\right)(\tfrac{1}{2}z)^{-\nu} & \text { for } \nu > 0 \\
  (2i/\pi)\log z & \text{ for } \nu = 0
 \end{cases}
\end{align}
Here $\sim$ means that the ratio of left- to right-hand side tends to $1$ as $z \to 0$, and $\log$ denotes the principal branch of the complex logarithm.

From the NIST Digital Library of Mathematical Functions \cite{NIST}, eq.~10.14.4,
\begin{align} \label{besselineq}
|J_{\nu}(z)|\leq \frac{|\frac{z}{2}|^{\nu}e^{|\Im z|}}{\Gamma(\nu+1)}\quad \text{for} \quad \nu>-\frac{1}{2}
\end{align}

The parametric Bessel equation, obtained by introducing three complex parameters $\sigma, p, q$, takes the form
\begin{align}
z^2\frac{d^2}{dz^2}w+(1-2p)z\frac{d}{dz}w+(\sigma^2q^2z^{2q}+p^2-\nu^2q^2)w=0
\end{align}
Let $\mathcal{C}_{\nu}$ and $\mathcal{D}_{\nu}$ denote any cylinder functions ($J_{\nu}$, $H^{(1)}_{\nu}$, or $H^{(2)}_{\nu}$). The particular solution is
\begin{align}\label{parambess}
w(z)=z^p\mathcal{C}_{\nu}(\sigma z^q),
\end{align}
cf.\ \cite{NIST}, eq.~(10.13.5). Setting $\sigma=1$, $p=0$, $q=1$ recovers \eqref{besseleq}.

\end{document}